\documentclass[oneside, 12pt]{amsart}
\usepackage{amsmath}
\usepackage{amssymb}
\usepackage{amsbsy}
\usepackage{color}
\usepackage[usenames,dvipsnames,x11names,svgnames]{xcolor}
\usepackage[colorlinks=true,linkcolor=NavyBlue,citecolor=DarkGreen]{hyperref}
\usepackage{amsthm}
\usepackage{amscd}
\usepackage[margin=1in]{geometry}
\usepackage{mathrsfs}
\usepackage{dsfont}
\usepackage{todonotes}
\usepackage{mathtools}
\usepackage{enumitem}

\newtheorem{thm}{Theorem}

\newtheorem{prop}[thm]{Proposition}

\newtheorem{lem}[thm]{Lemma}
\newtheorem{cor}[thm]{Corollary}
\theoremstyle{definition}

\newtheorem*{rem}{Remark}

\newcommand{\dt}{\, \textup{d}t}

\newcommand{\be}{\begin{equation*}}
	\newcommand{\ee}{\end{equation*} }

\newcommand{\ba}{\begin{align*}}
	\newcommand{\ea}{\end{align*}}

\newcommand{\ben}{\begin{equation}}
	\newcommand{\een}{\end{equation} }

\newcommand{\bs}{\begin{split}}
	\newcommand{\es}{\end{split}}

\newcommand{\bmu}{\begin{multline*}}
	\newcommand{\emu}{\end{multline*}}

\newcommand{\bmun}{\begin{multline}}
	\newcommand{\emun}{\end{multline}}

\author[M.~V.~Hagen]{Markus Valås Hagen}
\address{Department of Mathematical Sciences, Norwegian University of Science and Technology (NTNU), 7491 Trondheim, Norway} 
\email{markus.v.hagen@ntnu.no}

\thanks{Research supported in part by Grant 334466 of the Research Council of Norway. Some of the work for this article was carried out while the author was in residence at Institut Mittag-Leffler in Djursholm, Sweden during the spring semester of 2024, supported by the Swedish Research Council under grant no. 2021-06594. The author is grateful for the hospitality of IML and the excellent working conditions provided.}

\subjclass{11M06, 11M35, 11F66}


\begin{document}
	
	\title{Sharp conditional moment bounds for products of $L$-functions}
	\maketitle
	\begin{abstract}
		Assuming the Generalized Riemann Hypothesis and the Generalized Ramanujan Conjecture, we determine the order of the $2(k_1,\dots,k_r)$th moment of a product of distinct irreducible $L$-functions on the critical line. As a consequence, we obtain conditional information about the independence of these $L$-functions in the large deviations regime. We also obtain sharp moment bounds for Hurwitz zeta functions with rational parameter, and a certain family of Dedekind zeta functions.
	\end{abstract}
	
	\section{Introduction}
	One of the fundamental challenges in analytic number theory is to understand the behaviour of $L$-functions on the critical line. The moments of the $L$-functions under the scope are central in this study. Recently there has been a lot of progress in our understanding of these moments. For the case of the Riemann zeta function we now know, under the assumption of the Riemann Hypothesis (RH), that
	\begin{equation}\label{sharpZetaMoments}
		\int_{1}^{T} |\zeta(\tfrac12+it)|^{2k} \, \text{d}t \asymp T(\log T)^{k^2},
	\end{equation}
	for all positive real $k$. The lower bound implicit in the result above is unconditional, and follows by the two works \cite{RS2} ($k\geq 1$) and \cite{HeapSound1} ($0<k<1$). Unconditional sharp upper bounds, $$\int_{1}^{T} |\zeta(\tfrac12+it)|^{2k}\dt \ll T(\log T)^{k^2},$$ are more rare, and only known for $k\leq2$ \cite{HeapRadSound1}. Conditionally on RH however, Harper \cite{Harper1} recently proved for all $k>0$ that $$\int_{1}^{T} |\zeta(\tfrac12+it)|^{2k} \, \text{d}t \ll T(\log T)^{k^2},$$ by refining the near optimal bound $\ll T(\log T)^{k^2+\varepsilon}$ due to Soundararajan \cite{Sound1}.
	
	In this article we will prove a similar result to (\ref{sharpZetaMoments}), assuming the Generalized Riemann Hypothesis (GRH) and the Generalized Ramanujan Conjecture (GRC)\footnote{When we say that we assume GRH and GRC, it is implicit that we only assume it for the $L$-functions involved.}. To state our two main theorems, we need to introduce some notation, some of which will be made more explicit in the next section. Let $k_1,\dots,k_r>0$ be real numbers. Let $\pi_1,\dots,\pi_r$ be distinct irreducible cuspidal automorphic representations of $\text{GL}(m_1),\dots,\text{GL}(m_r)$ over $\mathbb{Q}$ (resp.) with unitary central character. To each of these representations $\pi_j$ we consider the associated $L$-functions $L(s,\pi_j)$ (to be defined in the next section). We are interested in the quantity $$I_{k_1,\dots,k_r}(\pi_1,\dots,\pi_r) \coloneq \int_{1}^{T} |L(\tfrac12+it,\pi_1)|^{2k_1}\cdots |L(\tfrac12+it,\pi_r)|^{2k_r} \, \text{d}t.$$ We look at this quantity as a measure of joint distributional properties. Independence between the $L$-functions is expected, and hence we should have $$\frac{1}{T}\int_{1}^{T} \prod_{j=1}^r |L(\tfrac12+it,\pi_j)|^{2k_j} \, \textup{d}t \approx \prod_{j=1}^r \frac{1}{T}\int_{1}^{T}|L(\tfrac12+it,\pi_j)|^{2k_j}\, \textup{d}t .$$
	In this direction, Milinovich--Turnage-Butterbaugh \cite{MilinoButter1} proved the close-to-optimal bound
	\begin{equation}\label{MTBAlmostSharp}
		I_{k_1,\dots,k_r}(\pi_1,\dots,\pi_r) \ll T(\log T)^{k_1^2+\dots+k_r^2+\varepsilon}
	\end{equation}
	for any $\varepsilon>0$, assuming GRH and and an averaged form of GRC (the so-called Hypothesis H of Rudnick and Sarnak \cite{RudSar1}). If we restrict ourselves to only one automorphic $L$-function, sharp bounds for the moments are known in some special cases assuming GRH and GRC. If $\pi$ is assumed to be self-contragredient\footnote{This assumption restricts the class of $L$-functions under consideration quite a lot: in particular it implies that the coefficients $a_{\pi}(n)$ in equation (\ref{logarithmicDerivativeDefinition}) are real.}, in addition to the assumptions above, then Tang and Xiao \cite{TangXiao} have proved $$I_{k}(\pi)\ll T(\log T)^{k^2}.$$ Under the same assumptions, Pi \cite{Pi1} proved the lower bound $$I_{k}(\pi)\gg T(\log T)^{k^2}.$$ Our two main theorems sharpen all of these results, and establish the order of the mixed moments (assuming GRH and GRC). In particular there is no need to assume that the automorphic representations considered are self-contragredient anymore.
	
	\begin{thm}\label{mainThmLowerBound}
		Let $k_1,\dots,k_r>0$ be real numbers. Let $\pi_1,\dots,\pi_r$ be distinct irreducible cuspidal automorphic representations of $\textup{GL}(m_1),\dots,\textup{GL}(m_r)$ over $\mathbb{Q}$ (resp.) with unitary central character. Assume GRH and GRC for all the $L$-functions $L(s,\pi_j)$. Then
		$$I_{k_1,\dots,k_r}(\pi_1,\dots,\pi_r) \gg T(\log T)^{k_1^2+\dots+k_r^2}.$$
	\end{thm}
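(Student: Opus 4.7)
The plan is to adapt the Heap--Soundararajan lower-bound strategy, originally developed for $\int_T^{2T} |\zeta(\tfrac12+it)|^{2k}\, dt$ with $0 < k \leq 1$, to products of distinct cuspidal automorphic $L$-functions. The starting point is to construct, for each $j$, a short Dirichlet polynomial $\mathcal{M}_j(t)$ serving as a proxy for $L(\tfrac12+it, \pi_j)^{k_j}$. Concretely, partition the primes $p \leq T^\theta$ (for a small fixed $\theta > 0$) into disjoint intervals $I_\ell = (X_{\ell-1}, X_\ell]$ with doubly exponential growth $X_\ell = \exp(\exp \ell)$, form the truncated exponentials
\[
\mathcal{N}_{j,\ell}(t) \coloneq \sum_{m=0}^{M_\ell} \frac{1}{m!}\Bigl(k_j \sum_{p \in I_\ell} \frac{a_{\pi_j}(p)}{p^{1/2+it}}\Bigr)^m,
\]
with $M_\ell$ large enough that the tails of the exponential are negligible, and set $\mathcal{M}_j(t) \coloneq \prod_\ell \mathcal{N}_{j,\ell}(t)$. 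GRC bounds $|a_{\pi_j}(p)|$ and thereby controls the coefficients of $\mathcal{M}_j$, keeping it a Dirichlet polynomial of length $T^{O(\theta)}$.

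Next, apply a H\"older/Cauchy--Schwarz chain (as in Rudnick--Soundararajan when $k_j \geq 1$, and with auxiliary polynomials as in Heap--Soundararajan when $k_j < 1$) to reduce the desired lower bound on $\int \prod_j |L_j|^{2k_j}\, dt$ to estimates for the two mean values
\[
\int_T^{2T} \prod_j L(\tfrac12+it, \pi_j)\, \overline{\mathcal{A}_j(t)}\, dt \qquad \text{and} \qquad \int_T^{2T} \prod_j |\mathcal{B}_j(t)|^2\, dt,
\]
where $\mathcal{A}_j, \mathcal{B}_j$ are Dirichlet polynomials derived from $\mathcal{M}_j$. The central identity to establish is
\[
\int_T^{2T} \prod_j |\mathcal{M}_j(t)|^2\, dt \;\asymp\; T(\log T)^{k_1^2 + \cdots + k_r^2},
\]
whose leading constant arises from the diagonal contribution, governed by $\sum_{p \leq x} |a_{\pi_j}(p)|^2/p = \log\log x + O(1)$, a consequence of the prime number theorem for the Rankin--Selberg $L$-function $L(s, \pi_j \times \tilde\pi_j)$ under GRH. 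The off-diagonal cross terms between distinct $\pi_i, \pi_j$ are negligible thanks to $\sum_{p \leq x} a_{\pi_i}(p)\overline{a_{\pi_j}(p)}/p = O(1)$, which follows from GRH for $L(s, \pi_i \times \tilde\pi_j)$, entire by the distinctness of $\pi_i$ and $\pi_j$. The mixed integral involving the $L_j$'s is evaluated analogously, using GRH for each $L(s, \pi_j)$ to truncate it against a suitable Dirichlet polynomial of short length.

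The main obstacle is the regime where some $k_j < 1$, where H\"older applied with exponent $2k_j/(2k_j-1)$ gives a negative exponent. The refined Heap--Soundararajan argument, using auxiliary Dirichlet polynomials together with a reverse-H\"older step, circumvents this. Extending the refinement to products brings further cross-polynomials into play, and one must verify that their mean values still factorize across the $\pi_j$ up to admissible error --- precisely where the pairwise distinctness of the $\pi_j$, combined with GRH for Rankin--Selberg $L$-functions, becomes indispensable. An additional subtlety not present in \cite{TangXiao} and \cite{Pi1} is that without the self-contragredient hypothesis the coefficients $a_{\pi_j}(p)$ may be genuinely complex, so the Dirichlet polynomial approximation to $L(\tfrac12+it, \pi_j)^{k_j}$ must carefully track the phases of the local Euler factors.
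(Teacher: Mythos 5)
Your proposal takes a genuinely different route from the paper. The paper's proof of Theorem~\ref{mainThmLowerBound} follows Heath--Brown's convexity method: it truncates the Dirichlet series for $\prod_j L(s,\pi_j)^{k_j}$ to obtain $S_N(s)=\sum_{n\leq N}\mathbf h_{k_1,\dots,k_r}(n)n^{-s}$, sets $g_N=\prod_j L_j^{k_j}-S_N$, and then controls the error term via Gabriel's convexity theorem, which reduces the estimate for $g_N$ on $\textup{Re}(s)=\tfrac12$ to a mean value of $g_N$ on $\textup{Re}(s)=\tfrac54$, where the Dirichlet series converges absolutely and Montgomery--Vaughan is trivial to apply. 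A short case analysis on whether $K(\tfrac12,T)<T$ or $\geq T$ then closes the argument. Crucially, nothing in this scheme requires evaluating a twisted moment of the $L$-functions.

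You instead propose the Rudnick--Soundararajan / Heap--Soundararajan mollifier route: build a short Dirichlet polynomial proxy for each $L(\tfrac12+it,\pi_j)^{k_j}$, run a Hölder chain, and reduce to a pure Dirichlet polynomial mean value together with a twisted moment $\int_T^{2T}\prod_j L(\tfrac12+it,\pi_j)\,\overline{\mathcal A_j(t)}\,dt$. The paper itself acknowledges this as a viable alternative (see the remark before Corollary~\ref{hurwitzCorollary}), and the Selberg-orthogonality inputs you identify for the diagonal and cross-terms are exactly right. But the step you dispatch in one sentence---evaluating that mixed integral by ``truncating against a suitable Dirichlet polynomial of short length''---is the technical heart of the method and is not routine: one needs an approximate functional equation or a GRH-based Dirichlet polynomial approximation for the degree-$(m_1+\dots+m_r)$ product, and then the diagonal extraction and error control must be carried out. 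Pairwise distinctness of the $\pi_j$ intervenes here more delicately than in the pure $\mathcal M_j$ mean value. That is precisely the work the convexity route sidesteps, which is why the paper calls Heath--Brown's method the simpler one.

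Two smaller points. The doubly-exponential decomposition $X_\ell=\exp(\exp\ell)$ with truncated exponentials $\mathcal N_{j,\ell}$ is the apparatus of the \emph{upper}-bound argument (Radziwi\l\l/Harper); for the lower bound à la Heap--Soundararajan a single truncated $d_k$-twisted Dirichlet polynomial suffices and the splitting only adds bookkeeping. Also, you invoke GRH for $L(s,\pi_j\times\widetilde\pi_j)$ and $L(s,\pi_i\times\widetilde\pi_j)$ to get the $\log\log x$ asymptotic and the $O(1)$ cross-term; the theorem's hypotheses are GRH and GRC for the $L(s,\pi_j)$ only, and the paper derives Selberg orthogonality from GRC (via \cite{MilinoButter1}), so be careful not to quietly strengthen the hypotheses.
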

	\begin{thm}\label{mainThmUpperBound}
		Under the same assumptions as in Theorem \ref{mainThmLowerBound},
		$$I_{k_1,\dots,k_r}(\pi_1,\dots,\pi_r) \ll T(\log T)^{k_1^2+\dots+k_r^2}.$$
	\end{thm}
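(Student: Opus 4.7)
My plan is to adapt the conditional upper-bound strategy of Soundararajan \cite{Sound1}, as refined by Harper \cite{Harper1}, to the product setting. The starting point is a Chandee-type pointwise majorant (valid under GRH for each $L(s,\pi_j)$): there is a constant $A$ depending only on the $m_j$ such that, for $2 \leq x \leq T$,
$$\log|L(\tfrac12+it,\pi_j)| \leq \operatorname{Re}\!\sum_{p\leq x}\frac{a_{\pi_j}(p)}{p^{1/2+it}}\cdot\frac{\log(x/p)}{\log x} + \frac{A\log T}{\log x} + O(1),$$
with GRC absorbing the prime-power contributions into the error. Multiplying through by $2k_j$ and summing over $j$ reduces the estimation of $I_{k_1,\dots,k_r}(\pi_1,\dots,\pi_r)$ to bounding the exponential moment $\int_1^T\exp(2\operatorname{Re}\mathcal{P}(t,x))\,\textup{d}t$, where
$$\mathcal{P}(t,x)=\sum_{p\leq x}\frac{c_p}{p^{1/2+it}}\cdot\frac{\log(x/p)}{\log x},\qquad c_p=\sum_{j=1}^{r} k_j\, a_{\pi_j}(p),$$
modulo a factor of the form $\exp(O(\log T/\log x))$ that must be absorbed by a careful choice of $x$ as a small power of $T$.

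I would then deploy Harper's partitioning scheme. Decompose the primes $p\leq x$ into geometric intervals $I_\ell$ whose log-lengths shrink double-exponentially in $\ell$, write $\mathcal{P}=\sum_\ell\mathcal{P}_\ell$, and introduce thresholds $\alpha_\ell$ growing just fast enough that $\sum_\ell \alpha_\ell$ is manageable. For $L\geq 0$, let $\mathcal{S}_L\subset[1,T]$ be the set of $t$ for which $L$ is the first index with $|\operatorname{Re}\mathcal{P}_L(t)|>\alpha_L$. On the "good" set where no threshold is exceeded, Taylor-expanding $\exp(2\operatorname{Re}\mathcal{P})$ and integrating termwise via the Montgomery--Vaughan mean-value theorem yields a bound of the target order. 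On each $\mathcal{S}_L$, Markov's inequality applied to a high even moment of $\mathcal{P}_L$ bounds $\operatorname{meas}(\mathcal{S}_L)$, and the residual tail $\sum_{\ell>L}\mathcal{P}_\ell$ is again handled by Taylor expansion.

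The essential input to all of these mean-value computations is a Rankin--Selberg identity for the variance-type quantity $\sum_p |c_p|^2/p$. Expanding $|c_p|^2=\sum_{i,j}k_ik_j\, a_{\pi_i}(p)\overline{a_{\pi_j}(p)}$, the diagonal terms contribute $\sum_{p\leq y}|a_{\pi_j}(p)|^2/p=\log\log y+O(1)$ via the simple pole of $L(s,\pi_j\times\widetilde\pi_j)$ at $s=1$, while the cross terms contribute $\sum_{p\leq y}a_{\pi_i}(p)\overline{a_{\pi_j}(p)}/p=O(1)$ for $i\neq j$, since by Jacquet--Shalika the Rankin--Selberg $L$-function $L(s,\pi_i\times\widetilde\pi_j)$ is entire and nonvanishing at $s=1$ when $\pi_i\not\cong\pi_j$. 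Assembling these gives $\sum_{p\leq y}|c_p|^2/p=(k_1^2+\cdots+k_r^2)\log\log y+O(1)$, which is the quantitative expression of independence that forces the target exponent $\sum_j k_j^2$ rather than $(\sum_j k_j)^2$.

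The main obstacle I anticipate is propagating this independence through Harper's machinery, so that the cross-correlations between the $\pi_j$'s enter the final bound only through the aggregate $\sum_p|c_p|^2/p$. In the zeta case the underlying Dirichlet polynomial is essentially real, whereas here $c_p$ is genuinely complex; consequently the high-moment computations must be carried out symmetrically in $\mathcal{P}_L$ and $\overline{\mathcal{P}_L}$, and the pairing combinatorics need more careful bookkeeping. A secondary technical point is calibrating the thresholds $\alpha_\ell$ and the choice of $x$ to simultaneously control the $A\log T/\log x$ error from the Chandee majorant and keep each Markov bound sharp; this is a tuning exercise in the style Harper develops in the zeta case.
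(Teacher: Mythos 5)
Your proposal tracks the paper's proof essentially step for step: Chandee's conditional majorant, Harper's interval decomposition with shrinking thresholds and a good/bad set dichotomy, a Radziwi\l\l-style truncated Taylor expansion combined with Montgomery--Vaughan on the good set, high-moment/Markov bounds for each bad event (with the truncation length $x$ in Chandee's inequality shortened accordingly to keep $A\log T/\log x$ controlled), and the exponent $\sum_j k_j^2$ entering through Selberg orthogonality exactly as you describe. The one obstacle you flag -- the complex nature of $c_p=\sum_j k_j a_{\pi_j}(p)$ -- turns out to be benign, since Soundararajan's high-moment lemma for short Dirichlet polynomials already allows arbitrary complex coefficients $a(p)$ and one works throughout with $|\mathcal{P}_L|$ rather than $\operatorname{Re}\mathcal{P}_L$, so no extra pairing combinatorics arise beyond the real case.
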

	\noindent In other words, we establish under GRH and GRC that $$I_{k_1,\dots,k_r}(\pi_1,\dots,\pi_r)\asymp T(\log T)^{k_1^2+\dots+k_r^2}.$$  
	
	\subsection{The statistical behaviour of $L$-functions and independence between them}
	There are many reasons to be interested in the moments of products of $L$-functions. One of them, which we focus on in this article, is the statistical independence of $L$-functions. 
	
	The study of the statistical behaviour of $L$-functions arguably started with Selberg \cite{Sel2}. His celebrated central limit theorem states that $\log|\zeta(1/2+it)|$ has approximately a normal distribution with mean $0$ and variance $\frac12 \log\log|t|$. More specifically 
	\begin{equation}\label{selbergCLT}
		\frac{1}{T}\textup{meas}\left(t \in [1,T] : \frac{\log|\zeta(1/2+it)|}{\sqrt{\frac12 \log\log T}}\geq V\right) \sim \frac{1}{\sqrt{2\pi}}\int_{V}^\infty e^{-x^2/2}\,\text{d}x,
	\end{equation}
	for fixed $V$ and $T$ sufficiently large. Selberg showed that this holds uniformly for $V \leq (\log \log \log T)^{\frac12 - \varepsilon}$. In 2011, Radziwi\l\l\, \cite{Rad1} improved this to $V \ll (\log\log T)^{\frac{1}{10}-\varepsilon}$, by introducing some novel ideas we will utilize in this paper. 
	
	{The asymptotic (\ref{selbergCLT}) does likely not persist to hold uniformly for large values of $V$. However, it is conjectured that (\ref{selbergCLT}) should hold uniformly for $V=o(\sqrt{\log \log T})$. We will refer to anything beyond this range as the ``large devitations range'', which is the range we shall be concerned with in this paper. Here, although the asymptotic is conjectured to fail beyond this range, it is expected that the Gaussian behaviour persists to order for larger $V$. When $V$ is of order $\sqrt{\log\log T}$, it has been conjectured by Radziwi\l\l\, that an asymptotic similar to (\ref{selbergCLT}) holds, but with a constant factor\footnote{{Radziwi\l\l's conjecture is way more precise: if $V=k\sqrt{\log\log T}$, the constant $C_k$ in front is the (conjectured) constant such that $\int_{T}^{2T}|\zeta(\tfrac12+it)|^{2k}\dt\sim C_kT(\log T)^{k^2}$.}} in front of the Gaussian integral. Going beyond $\sqrt{\log\log T}$, Soundararajan \cite{Sound1} proved, Gaussian bounds up to $\sqrt{\log \log T}\log\log\log T$ assuming RH. Unconditionally such bounds are known up to $V\leq 2\sqrt{2\log\log T}$ by Heap--Soundararajan \cite{HeapSound1} and Arguin--Bailey \cite{ArguinBailey1, ArguinBailey2} building on previous results of Jutila \cite{Jutila1}. Here the constant $2\sqrt{2}$ comes directly from the largest moment of $\zeta(s)$ that we know, i.e. the fourth. For even larger $V$, Aymone--Heap--Zhao \cite{AymoneHeapZhao1} investigated a random model for the zeta function. It suggested that zeta behaves more or less Gaussian all the way up to the max, if differing ever so slightly - e.g. for $V\approx \sqrt{\log T}$ their model predicts that the distribution function looks more like $e^{-V^2}$ than $e^{-V^2/2}$.}
	
	The formula (\ref{selbergCLT}) can be generalized to apply not only to more general L-functions, but also to multiple $L$-functions simultaneously. This exhibits an indepedence between them in a statistical sense. Indeed, for a set of primitive $L$-functions $L_1,\dots,L_r$ in the Selberg class, Selberg \cite{Sel1} outlined a proof of their independence under certain assumptions for the coefficients in the Dirichlet series and the Euler product. A proof of this is given by Bombieri--Hejhal \cite[Theorem B]{BombHej1}, assuming GRH or other strong statements about the zeroes of the $L_j$'s. They prove for fixed $V_j$ that 
	\begin{equation}\label{bombieriHejhalCLT}
		\frac{1}{T}\textup{meas}\left(t \in [T,2T] : \frac{\log |L_j(\tfrac12+it)|}{\sqrt{\frac12 \log\log T}}\geq V_j, j=1,\dots,r\right) \sim \prod_{j=1}^r \int_{V_j}^\infty e^{-x^2/2} \,\text{d}x,
	\end{equation}
	for $T$ sufficiently large. {Inoue and Li \cite{InoueLi1} have recently proven\footnote{They work in a certain subclass of the Selberg class, where they assume Hypothesis H, but not the Generalized Ramanujan Conjecture.}, assuming a strong zero density estimate, that this result holds uniformly for $V_j \ll (\log \log T)^{\frac{1}{10}-\varepsilon}$, and, similarly to the individual L-function case, it is expected that this asymptotic holds for $V_j=o(\sqrt{\log\log T})$. Subsequently, we anticipate that Gaussian behavior will continue to hold for some time, and perhaps gradually beginning to deviate slightly in the extreme ranges. To this end, Inoue and Li also proved some Gaussian bounds for (\ref{bombieriHejhalCLT}) in restricted ranges of larger $V_j$, both unconditionally (Theorem 2.2) and conditionally (Theorem 2.3). Their conditional bounds lose sharpness when $V_j\asymp \sqrt{\log \log T}$. More specifically in this range, their bounds for (\ref{bombieriHejhalCLT}) are of the form  $$ \exp\left(-\left(c+o(1)\right)(V_1^2+\dots+V_r^2)\right),$$ for some constant $c$.  In this article, we sharpen up their result. Knowing the moments to order, we are able to determine the order of the distribution function in the range $V_j \asymp \sqrt{\log\log T}$. This is our next theorem.} 
	
	\begin{thm}\label{largeDeviationsCorollary}
		Keep the assumptions from Theorem \ref{mainThmLowerBound} and \ref{mainThmUpperBound}. Let $V_j \asymp \sqrt{\log \log T}$. Then $$\frac{1}{T}\textup{meas}\left( t \in [1,T] : \frac{\log|L(\tfrac12+it,\pi_j)|}{\sqrt{\frac12 \log \log T}}\geq V_j, 1\leq j \leq r\right) \asymp \exp\left(-(1+o(1))\frac{V_1^2+\dots+V_r^2}{2}\right).$$ 
	\end{thm}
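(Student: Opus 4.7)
The plan is a standard moments-to-large-deviations translation using Theorems \ref{mainThmLowerBound} and \ref{mainThmUpperBound} as black boxes. Parameterize by $k_j := V_j/\sqrt{2\log\log T}$, so the thresholds $W_j := V_j\sqrt{\tfrac12 \log\log T}$ equal $k_j\log\log T$; the hypothesis $V_j \asymp \sqrt{\log\log T}$ forces the $k_j$'s to lie in a fixed compact subset of $(0,\infty)$. The upper bound is immediate: Chebyshev's inequality combined with Theorem \ref{mainThmUpperBound} gives
$$\textup{meas}\{t\in[1,T] : \log|L(\tfrac12+it,\pi_j)| \geq W_j\text{ for all }j\} \leq e^{-2\sum_j k_j W_j}\, I_{k_1,\dots,k_r}(\pi_1,\dots,\pi_r) \ll T (\log T)^{-\sum_j k_j^2},$$
which equals $T\exp(-\sum_j V_j^2/2)$ when rewritten in the $V_j$'s. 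The choice of $k_j$ is precisely the Chernoff optimizer.

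For the lower bound write $E := \bigcap_j E_j$ with $E_j := \{|L(\tfrac12+it,\pi_j)|\geq e^{W_j}\}$, and introduce small parameters $\epsilon, \eta > 0$ tending to $0$ slowly as $T\to\infty$. Hölder's inequality with exponent $p = 1+\eta$ applied to a shifted moment gives
$$\int_E \prod_i|L(\tfrac12+it,\pi_i)|^{2(k_i+\epsilon)}\,\textup{d}t \leq \textup{meas}(E)^{(p-1)/p}\left(\int_1^T \prod_i|L(\tfrac12+it,\pi_i)|^{2p(k_i+\epsilon)}\,\textup{d}t\right)^{1/p}.$$
Bounding the right-hand integral by Theorem \ref{mainThmUpperBound}, if we establish the key estimate
$$\int_E \prod_i|L(\tfrac12+it,\pi_i)|^{2(k_i+\epsilon)}\,\textup{d}t \gg T(\log T)^{\sum_i(k_i+\epsilon)^2}, \qquad (\star)$$
then rearranging yields $\textup{meas}(E) \gg T(\log T)^{-p\sum_i(k_i+\epsilon)^2}$. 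Since the $k_j$'s are $O(1)$, sending $\epsilon,\eta\to0$ slowly makes $p\sum_i(k_i+\epsilon)^2 = (1 + o(1))\sum_i k_i^2$, giving the claimed $T\exp(-(1+o(1))\sum_j V_j^2/2)$.

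The key estimate $(\star)$ is where Theorem \ref{mainThmLowerBound} enters, and it is the only nontrivial step. Decompose $E^c = \bigcup_j E_j^c$ with $E_j^c = \{|L(\tfrac12+it,\pi_j)|<e^{W_j}\}$. On $E_j^c$ the pointwise bound $|L(\tfrac12+it,\pi_j)|^{2\epsilon}\leq e^{2\epsilon W_j}$ combined with Theorem \ref{mainThmUpperBound} gives
$$\int_{E_j^c}\prod_i|L(\tfrac12+it,\pi_i)|^{2(k_i+\epsilon)}\,\textup{d}t \leq e^{2\epsilon W_j}\int_1^T |L_j|^{2k_j}\!\!\prod_{i\neq j}\!|L_i|^{2(k_i+\epsilon)}\,\textup{d}t \ll T(\log T)^{\sum_i(k_i+\epsilon)^2 - \epsilon^2},$$
where one uses $e^{2\epsilon W_j} = (\log T)^{2\epsilon k_j}$ to cancel the linear-in-$\epsilon$ contributions. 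Comparing with the lower bound $\gg T(\log T)^{\sum_i(k_i+\epsilon)^2}$ from Theorem \ref{mainThmLowerBound}, the ratio of $\int_{E^c}$ to the full integral is $O(r(\log T)^{-\epsilon^2})$, which is $o(1)$ provided $\epsilon^2 \log\log T\to\infty$; e.g.\ $\epsilon = (\log\log T)^{-1/4}$ works. This yields $(\star)$.

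The main technical point to verify is that the implicit constants in Theorems \ref{mainThmLowerBound} and \ref{mainThmUpperBound} are locally uniform in the arguments, since we apply them with the $k_i$'s and $k_i+\epsilon$'s ranging over a compact subset of $(0,\infty)^r$. This should be a direct consequence of the proofs of those theorems and not an additional assumption; once it is in hand, the argument above is bookkeeping.
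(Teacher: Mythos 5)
Your upper bound is exactly the paper's (Chebyshev at the Chernoff-optimal $k_j=C_j/\sqrt 2$), but your lower bound takes a genuinely different route. The paper writes the $2(k_1,\dots,k_r)$-moment as an explicit Laplace-type integral of $\Phi_T$ against $\exp(2\sqrt{\tfrac12\log\log T}\sum k_jW_j)$ (Lemma \ref{largeDeviationsLemma1}), shows via Theorem \ref{mainThmUpperBound} that the mass of this integral concentrates on the box $\prod_j(V_j(1-\varepsilon),V_j(1+\varepsilon))$, and then reads off a lower bound for $\Phi_T$ from Theorem \ref{mainThmLowerBound}. You instead show directly that the slightly tilted moment $\int_E\prod_i|L_i|^{2(k_i+\epsilon)}$ retains a positive proportion of the full moment (your estimate $(\star)$, obtained by killing each $E_j^c$ at a cost of $(\log T)^{-\epsilon^2}$) and then extract $\operatorname{meas}(E)$ by H\"older with exponent $1+\eta$. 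Both are Laplace-inversion arguments and both are correct; yours is somewhat more streamlined, while the paper's gives the extra structural information that the integrand concentrates where $\log|L_j|\approx V_j\sqrt{\tfrac12\log\log T}$. Two bookkeeping points. First, the uniformity caveat you raise is real, but it is not a defect relative to the paper: the paper's own Lemma \ref{largeDeviationsIntegralMassLemma} also applies Theorem \ref{mainThmUpperBound} at the perturbed exponent $(1+\delta)k_1$ with $\delta\to0$, so both arguments tacitly need the implied constants to stay bounded as the exponents approach the fixed point $(k_1,\dots,k_r)$; the paper flags global uniformity in $k$ as open but fixed-point local boundedness is all either proof needs. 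Second, your $\eta$ cannot tend to $0$ arbitrarily fast: rearranging H\"older raises the implied constants to the power $(1+\eta)/\eta$, contributing a factor $\exp(-O(1/\eta))$, so you need $\eta\log\log T\to\infty$ (e.g.\ $\eta=(\log\log T)^{-1/2}$) in addition to your stated $\epsilon^2\log\log T\to\infty$; with that choice everything is absorbed into the $(1+o(1))$ and the argument closes.
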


	\subsection{Corollaries on other moments}	
	We also obtain two other corollaries of Theorem \ref{mainThmLowerBound} and \ref{mainThmUpperBound}, that are more in the moment spirit. The first one is sharp bounds for moments of Dedekind zeta functions. This removes the $\varepsilon$ in the exponent in a result of Milinovich--Turnage-Butterbaugh \cite[Theorem 1.3.]{MilinoButter1}. Such mean value results can be applied to study for example the  coefficients of Dedekind zeta functions in small intervals - see \cite[Theorem 1.4.]{MilinoButter1}.
	
	\begin{cor}\label{dedekindCorollary}
		Let $K$ be a finite solvable Galois extension of $\mathbb{Q}$. Assume RH for $\zeta_K(s)$. Then for any $k>0$, $$\int_{1}^{T} |\zeta_K(\tfrac12+it)|^{2k} \dt \asymp T(\log T)^{k^2[K:\mathbb{Q}]}.$$
	\end{cor}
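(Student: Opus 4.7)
The plan is to reduce the corollary to Theorems \ref{mainThmLowerBound} and \ref{mainThmUpperBound} by factoring $\zeta_K$ into cuspidal automorphic $L$-functions. Since $K/\mathbb{Q}$ is solvable Galois, Arthur--Clozel's solvable base change tells us that each irreducible representation $\rho_j$ of $G=\mathrm{Gal}(K/\mathbb{Q})$ corresponds to an irreducible cuspidal automorphic representation $\pi_j$ of $\mathrm{GL}(m_j)/\mathbb{Q}$ with $m_j=\dim\rho_j$ and $L(s,\rho_j)=L(s,\pi_j)$. Feeding the regular-representation decomposition of $G$ into the Artin formalism yields
\[
\zeta_K(s) \;=\; \prod_{j=1}^r L(s,\pi_j)^{m_j},
\]
where the $\pi_j$ are distinct (by strong multiplicity one, since non-isomorphic $\rho_j$ give rise to distinct Artin $L$-functions), and $[K:\mathbb{Q}]=|G|=\sum_{j=1}^r m_j^2$ by orthogonality of characters.

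Next I verify the hypotheses of the main theorems for the $\pi_j$. Since the zeros of $\zeta_K$ are the union with multiplicity of the zeros of the factors $L(s,\pi_j)$, RH for $\zeta_K$ forces GRH for every $\pi_j$. GRC is automatic here: the Satake parameters of each $\pi_j$ are eigenvalues of Frobenius acting on a finite-dimensional representation of the finite group $G$, so they are roots of unity and in particular of modulus at most one. Note that one of the $\pi_j$ corresponds to the trivial character and contributes the factor $\zeta(s)$; this is allowed by the main theorems, which permit $\mathrm{GL}(1)$ representations.

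Finally, raising the factorization to the $2k$-th power on the critical line gives
\[
|\zeta_K(\tfrac12+it)|^{2k} \;=\; \prod_{j=1}^r |L(\tfrac12+it,\pi_j)|^{2km_j},
\]
so the integral in question equals $I_{km_1,\ldots,km_r}(\pi_1,\ldots,\pi_r)$. Applying Theorems \ref{mainThmLowerBound} and \ref{mainThmUpperBound} with the positive real exponents $k_j=km_j$ yields
\[
\int_1^T |\zeta_K(\tfrac12+it)|^{2k}\dt \;\asymp\; T(\log T)^{\sum_j(km_j)^2} \;=\; T(\log T)^{k^2[K:\mathbb{Q}]},
\]
as claimed. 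The single genuinely deep input is Arthur--Clozel's automorphicity theorem for solvable Artin $L$-functions, which is what makes the factorization into \emph{cuspidal} $\pi_j$ available in the first place; once that is in hand, Theorems \ref{mainThmLowerBound} and \ref{mainThmUpperBound} take care of the analytic content and the $\varepsilon$ in the Milinovich--Turnage-Butterbaugh bound disappears for free.
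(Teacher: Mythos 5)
Your overall strategy is exactly the paper's (which simply defers to Milinovich--Turnage-Butterbaugh, Corollary 1.2): factor $\zeta_K$ into distinct cuspidal automorphic $L$-functions satisfying GRC, observe that RH for $\zeta_K$ gives GRH for the factors, and feed the exponents into Theorems \ref{mainThmLowerBound} and \ref{mainThmUpperBound}. The analytic part of your write-up is fine. The gap is in the input you attribute to Arthur--Clozel: the claim that \emph{every} irreducible representation $\rho_j$ of a solvable Galois group $G$ corresponds to a cuspidal automorphic $\pi_j$ with $L(s,\rho_j)=L(s,\pi_j)$ is the strong Artin conjecture for solvable groups, which is open in general. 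Arthur--Clozel prove it for \emph{nilpotent} groups; for solvable $G$ what their work (cyclic base change and automorphic induction, iterated along a chain $\mathbb{Q}=K_0\subset K_1\subset\cdots\subset K_n=K$ with cyclic steps) gives is only that $\zeta_K$ is the $L$-function of an isobaric automorphic representation of $\mathrm{GL}([K:\mathbb{Q}])$, i.e.\ a factorization $\zeta_K(s)=\prod_{j=1}^r L(s,\pi_j)^{e_j}$ into distinct cuspidals with some positive integer multiplicities $e_j$ that need not be matched to Artin representations.

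This matters because your exponent count $\sum_j m_j^2=|G|$ uses the identification $e_j=m_j=\dim\rho_j$. The fix, which is what the MTB argument actually runs on, is to obtain $\sum_j e_j^2=[K:\mathbb{Q}]$ directly: the Rankin--Selberg $L$-function of the isobaric representation $\Pi=\boxplus_j\pi_j^{\boxplus e_j}$ against its contragredient has a pole at $s=1$ of order $\sum_j e_j^2$, while by the Artin formalism this $L$-function corresponds to $\mathrm{Ind}_K^{\mathbb{Q}}\mathbf{1}\otimes\mathrm{Ind}_K^{\mathbb{Q}}\mathbf{1}$ and hence has pole order $\langle\mathrm{reg}_G,\mathrm{reg}_G\rangle=|G|$. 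Your remaining verifications survive in this formulation: the unramified local parameters of each $\pi_j$ divide the local Euler factor of $\zeta_K$, whose inverse roots are roots of unity, so GRC holds for the $\pi_j$ even without knowing they are Artin; RH for $\zeta_K$ still forces GRH for each factor; and applying the main theorems with $k_j=ke_j$ gives the exponent $k^2\sum_j e_j^2=k^2[K:\mathbb{Q}]$ as required.
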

	
	\begin{proof}
		Following the proof of \cite[Corollary 1.2]{MilinoButter1} and the references therein, this is an immediate consequence of Theorem \ref{mainThmLowerBound} and Theorem \ref{mainThmUpperBound}. 
	\end{proof}
	
	\begin{rem}
		We believe that the assumption of solvability of the Galois group $\textup{Gal}(K/\mathbb{Q})$ can be relaxed in the previous corollary. However with this assumption, Corollary \ref{dedekindCorollary} is a direct consequence of our main theorems, because $\zeta_K$ admits a factorization into irreducible automorphic $L$-functions (which all satisfy GRC) in this case. Working more directly with the Dedekind zeta functions, one should be able to modify the proof to also holds in the non-solvability case. We refer the reader to Section 5 of \cite{MilinoButter1} for more discussion around this.
	\end{rem}
	
	The final corollary is inspired by a paper of Sahay \cite{Sahay1} on the moments of the Hurwitz zeta function with rational parameter. The Hurwitz zeta function $\zeta(s,\alpha)$, with any parameter $0<\alpha\leq 1$ is defined for $\text{Re}(s)>1$ by $$\zeta(s,\alpha)=\sum_{n=0}^\infty \frac{1}{(n+\alpha)^s},$$ and can be extended to a meromorphic function with a simple pole in $s=1$. One of the reasons that these functions are interesting (when $\alpha$ is rational), is that RH fails for more or less all of them, but they do still admit many similarities to $L$-functions for which we believe RH to be true. One of these similarities are the moments.
	
	Let $\alpha=a/q, \gcd(a,q)=1, 1\leq a \leq q$. In \cite{Sahay1}, Sahay conjectured that for any $k>0$, $$\int_{T}^{2T} |\zeta(\tfrac12+it,\alpha)|^{2k}\dt \sim c_k(\alpha)T(\log T)^{k^2},$$ for some constant $c_k(\alpha)$. This is known unconditionally in the $k=1,2$. The case $k=1$ is due to Rane \cite{Rane1}, and also holds for irrational $\alpha$. The case $k=2$ was first proved by Andersson, and can be found in an unpublished section of his thesis \cite[p. 71-72.]{Andersson}. Sahay also gave a proof in \cite{Sahay1}. Furthermore, conditionally on RH for Dirichlet $L$-functions, he proved for all positive integers $k$ that $$T(\log T)^{k^2} \ll \int_{T}^{2T} |\zeta(\tfrac12+it,\alpha)|^{2k}\dt \ll T(\log T)^{k^2+\varepsilon},$$ for any $\varepsilon>0$. There is a very natural reason to assume RH for Dirichlet $L$-functions, namely that 
	\begin{equation}\label{hurwitzZetaDirichletLinearCombs}
		\zeta(s,\alpha)=\frac{q^s}{\varphi(q)}\sum_{\chi} \overline{\chi(a)}L(s,\chi).
	\end{equation}
	Here the sum runs over all Dirichlet characters modulo $q$.
	
	{We improve Sahay's result in two ways:  the $\varepsilon$ in the exponent is removed, and the order of the moments is determined for any \emph{real} $k>1/2$. Using the method of Heap and Soundararajan \cite{HeapSound1} it seems likely that one could extend this all the way down to $k>0$, but this would require us to deviate from the proof strategy that gives a rather simple proof of Corollary \ref{hurwitzCorollary}.}
	
	\begin{cor}\label{hurwitzCorollary}
		Let $\alpha=a/q, \gcd(a,q)=1, 1\leq a \leq q$. Assume RH for all Dirichlet $L$-functions modulo $q$. Then for all $k>1/2$, we have $$T(\log T)^{k^2}\ll\int_{1}^{T} |\zeta(\tfrac12+it,\alpha)|^{2k} \,\textup{d}t \ll T(\log T)^{k^2}.$$
		The upper bound also holds for $k>0$.
	\end{cor}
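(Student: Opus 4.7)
The plan is to reduce the bounds for $\zeta(s,\alpha)$ to corresponding bounds for the individual Dirichlet $L$-functions $L(s,\chi)$ via the decomposition (\ref{hurwitzZetaDirichletLinearCombs}) and its inverse, obtained by character orthogonality:
\be
L(s,\chi) = q^{-s}\sum_{\substack{a' \bmod q \\ \gcd(a',q)=1}} \chi(a')\,\zeta(s,a'/q).
\ee
After induction from its associated primitive character, each $L(s,\chi)$ is the $L$-function of a $\textup{GL}(1)$ idele class character with unitary central character; GRC is automatic on $\textup{GL}(1)$ and GRH holds by hypothesis, so Theorems \ref{mainThmLowerBound} and \ref{mainThmUpperBound} apply with $r=1$ to give $\int_1^T |L(\tfrac12+it,\chi)|^{2k}\dt \asymp T(\log T)^{k^2}$ for each $\chi$ modulo $q$.

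For the upper bound, valid for all $k>0$, I would apply to (\ref{hurwitzZetaDirichletLinearCombs}) the elementary inequality $|w_1+\dots+w_n|^{2k} \ll_{n,k} \sum_j |w_j|^{2k}$, which holds via the power mean inequality when $2k \geq 1$ and via subadditivity of $x\mapsto x^{2k}$ when $2k \leq 1$. This yields the pointwise bound $|\zeta(\tfrac12+it,\alpha)|^{2k} \ll_{k,q} \sum_\chi |L(\tfrac12+it,\chi)|^{2k}$, and integrating and applying Theorem \ref{mainThmUpperBound} to each of the finitely many summands gives the stated upper bound.

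For the lower bound with $k>1/2$, the natural first step is to apply the power mean inequality (valid precisely because $2k \geq 1$) to the inverse identity, producing $|L(\tfrac12+it,\chi_0)|^{2k} \leq q^{-k}\varphi(q)^{2k-1}\sum_{a'}|\zeta(\tfrac12+it,a'/q)|^{2k}$. Integrating against Theorem \ref{mainThmLowerBound} applied to $L(s,\chi_0)$ yields the aggregate estimate $\sum_{a'}\int_1^T |\zeta(\tfrac12+it,a'/q)|^{2k}\dt \gg T(\log T)^{k^2}$. The main obstacle is then to extract a matching lower bound for the specific residue $a$ in the statement: a priori the $\varphi(q)$ individual integrals are only bounded above by $T(\log T)^{k^2}$ by the upper bound just proved, so the aggregate mass could in principle concentrate on a proper subset of reduced residues.

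I would resolve this uniformity via a dominance argument based on Theorem \ref{largeDeviationsCorollary}. Consider the event
\be
E := \bigl\{t \in [1,T] : |L(\tfrac12+it,\chi_0)| \geq (\log T)^k \text{ and } |L(\tfrac12+it,\chi)| \leq (\log T)^k/(2\varphi(q)) \text{ for all } \chi \neq \chi_0\bigr\}.
\ee
On $E$, the reverse triangle inequality applied to (\ref{hurwitzZetaDirichletLinearCombs}) gives $|\zeta(\tfrac12+it,a/q)| \gg_q (\log T)^k$ uniformly in the residue $a$, since only the moduli of the $L(\tfrac12+it,\chi)$ enter. Theorem \ref{largeDeviationsCorollary} with $r=1$ gives $\textup{meas}\{|L(\tfrac12+it,\chi_0)|\geq(\log T)^k\}\asymp T(\log T)^{-k^2}$, while applied with $r=2$ to each pair $(\chi_0,\chi)$ it gives $\textup{meas}\{|L(\tfrac12+it,\chi_0)|\geq(\log T)^k,\ |L(\tfrac12+it,\chi)|\geq(\log T)^k/(2\varphi(q))\}\asymp T(\log T)^{-2k^2}$, which is negligible by a factor of $(\log T)^{-k^2}$. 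A short union bound then forces $\textup{meas}(E) \gg T(\log T)^{-k^2}$, whence
\be
\int_1^T|\zeta(\tfrac12+it,a/q)|^{2k}\dt \geq \int_E |\zeta(\tfrac12+it,a/q)|^{2k}\dt \gg T(\log T)^{k^2},
\ee
completing the lower bound.
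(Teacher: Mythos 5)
Your upper bound is the same as the paper's: Jensen/power-mean for $k\ge 1/2$, subadditivity of $x\mapsto x^{2k}$ for $k\le 1/2$, then Theorem~\ref{mainThmUpperBound} term by term.

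Your lower bound takes a genuinely different route, and it has a gap. The paper argues via H\"older's inequality applied to $q^{-it}\zeta(\tfrac12+it,\alpha)\,\overline{\zeta(\tfrac12+it)}\,|\zeta(\tfrac12+it)|^{2(k-1)}$: the exponent pair $(2k, 2k/(2k-1))$ isolates $\int|\zeta(\tfrac12+it,\alpha)|^{2k}$ on one side, and the cross term is evaluated by opening up \eqref{hurwitzZetaDirichletLinearCombs} and using Theorems~\ref{mainThmLowerBound}--\ref{mainThmUpperBound} plus Harper's sharp bound for $\zeta$, producing a main term $\asymp T(\log T)^{k^2}$ and an error $O(T(\log T)^{k^2-k+1/2})$. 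The restriction $k\ge 1/2$ enters precisely so that the error is dominated. You correctly identify the core difficulty (the naive aggregate bound over all residues $a'$ does not single out $a$), and your proposal to localize via level sets is natural. But your invocation of Theorem~\ref{largeDeviationsCorollary} is too optimistic: that theorem gives $\Phi_T(V)\asymp T\exp\bigl(-(1+o(1))\tfrac{V^2}{2}\bigr)$, not $\Phi_T(V)\asymp T\exp(-\tfrac{V^2}{2})$. At the scale $V\asymp\sqrt{\log\log T}$ the unquantified $o(1)$ costs a factor $(\log T)^{o(1)}$ that may be unbounded. Concretely, with your threshold $(\log T)^k$ you obtain only $\textup{meas}(E)\gg T(\log T)^{-k^2(1+o(1))}$, and then $\int_E|\zeta(\tfrac12+it,\alpha)|^{2k}\,\textup{d}t\gg T(\log T)^{k^2(1-o(1))}$, which falls short of $T(\log T)^{k^2}$. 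A telling symptom is that your argument nowhere uses $k>1/2$; if it were correct as written it would resolve the range $0<k\le1/2$, which the paper explicitly flags as open and requiring a different method (Heap--Soundararajan). To close the gap along your lines, one would need a version of Theorem~\ref{largeDeviationsCorollary} in which the $(1+o(1))$ in the exponent is strengthened to $1+O(1/\log\log T)$ (so that the deviation from $\exp(-V^2/2)$ is only a bounded factor), which is not what the theorem states.

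There is also a smaller point worth flagging: Theorem~\ref{largeDeviationsCorollary} is phrased for distinct irreducible cuspidal automorphic representations, whereas $\chi_0$ and imprimitive $\chi$ modulo $q$ must first be replaced by their primitive inducers (respectively $\zeta$ and $\chi^*$); this only changes $\log|L(\tfrac12+it,\chi)|$ by $O(1)$ and so is harmless, but it should be stated.
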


	\subsection{Proof strategy}
	The strategy to prove Theorem \ref{mainThmLowerBound} is due to Heath--Brown \cite{Heath-Brown1} and dates back to the 1980's. He used this method to prove sharp lower bounds for the $2k$th moments of the Riemann zeta function\footnote{He also used it to prove some sharp upper bounds.}. The idea is roughly as follows. For $\text{Re}(s)>1$, we have $$L^{k_1}(s,\pi_1)\cdots L^{k_r}(s,\pi_r)=\sum_{n=1}^\infty \frac{\mathbf{h}_{k_1,\dots,k_r}(n)}{n^s},$$ for some coefficients $\mathbf{h}_{k_1,\dots,k_r}(n)$ that can be explicitly written down. A trivial application of the triangle inequality tells us that 
	\begin{align*}
		\int_1^{T} \left|\sum_{n\leq N} \frac{\mathbf{h}_{k_1,\dots,k_r}(n)}{n^{1/2+it}}\right|^2\, \text{d}t 
		&\ll \int_1^{T} |L(\tfrac12+it,\pi_1)|^{2k_1}\cdots |L(\tfrac12+it,\pi_r)|^{2k_r}\,\text{d}t \\ 
		&+ \int_1^{T} \left|L^{k_1}(\tfrac12+it,\pi_1)\cdots L^{k_r}(\tfrac12+it,\pi_r) - \sum_{n\leq N} \frac{\mathbf{h}_{k_1,\dots,k_r}(n)}{n^{1/2+it}} \right|^2\dt.
	\end{align*}
	On the right hand side of the inequality, the first term is the quantity we want to bound from below. On the left hand side, we have something that we can compute using the Montgomery--Vaughan mean value theorem. Thus we are left with estimating the last term on the right hand side of the inequality. Using certain convexity theorems we are able to reduce this computation to the same integral but evaluated in $\tfrac54+it$ instead of $\tfrac12+it$. On $\text{Re}(s)=5/4$, the Dirichlet series definition for $L^{k_1}\cdots L^{k_r}$ is valid, and so the computation of the integral quickly reduces to a mean-value computation that again can be computed by using Montgomery--Vaughan. The main point of this last maneuvre is that the integrand on $\text{Re}(s)=5/4$ is given by the tail of an absolutely convergent Dirichlet series, and its mean value squared is clearly very small. Our execution of this method is more or less identical to that of Heath-Brown.

	The strategy for proving Theorem \ref{mainThmUpperBound} is more recent, and is an accumulation of methods of Soundararajan, Radziwi\l\l\, and Harper, which we quickly survey. The first contribution due to Soundararajan appears in \cite{Sound1}. There he shows on RH that $\log|\zeta(1/2+it)|$ can be bounded above by a sum over primes, with no contribution from a sum over non-trivial zeroes. Specifically, this lets us bound $|\zeta(1/2+it)|$ by an Euler product of suitable length. Chandee \cite{Chandee1} generalized Soundararajan's result to more general $L$-functions. Her inequality will be our point of departure for proving Theorem \ref{mainThmUpperBound}. 
	
	The next idea we will utilize is due to Radziwi\l\l\, and first appeared in \cite{Rad1}. Roughly speaking, it allows us to compute moments of Euler products $\mathcal{P}$ on $[T,2T]$, under the assumption that we can restrict to certain subsets of $[T,2T]$ where $\mathcal{P}$ is of \emph{typical} size in a statistical sense (not deviating too much from its mean). Let us decompose $[T,2T]$ as $\mathscr{G} \cup \mathscr{B}$, where $\mathscr{G}$ is the set where $|\mathcal{P}|$ is of typical size, and $\mathscr{B}$ is the set where $|\mathcal{P}|$ is large. To compute an upper bound for $I_{k_1,\dots,k_r}(\pi_1,\dots,\pi_r)$ we first apply Chandee's inequality to go from $\prod_{j=1}^r L(\frac12+it,\pi_j)^{2k_j}$ to an Euler product $\mathcal{P}$. The integral over $\mathscr{G}$ quickly reduces to a mean-value computation that is admissible for Montgomery--Vaughan. The remaining set $\mathscr{B}$ will be so small in measure that the integral over $\mathscr{B}$ will not contribute anything more than the main term coming from the integral over $\mathscr{G}$. The main point of this method is that when we integrate over $\mathscr{G}$, instead of the whole range $[T,2T]$, we can let the length of $\mathcal{P}$ be significantly longer. We refer to \cite[p. 3--4]{Heap1} for an illustrative example of this idea.
	
	The idea described in the paragraph above will almost give sharp upper bounds. To get our desired sharp upper bounds we have to push the length of the Euler product to be slightly longer. The final idea we need appears in both \cite{Harper1} and \cite{RS1}. It revolves around splitting up our Euler product into many smaller Euler products whose variance (i.e. deviation from the mean) gets progressively smaller. Applying the idea of Radziwi\l\l\, locally to each of the Euler products, we can choose the original Euler product to be sufficiently long. Harper successfully applied this approach to exhibit conditional sharp upper bounds for the $2k$th moments of $\zeta(\tfrac12+it)$, and several authors have used the recipe outlined above to establish sharp moments bounds since.
	
	We find that many of the ideas mentioned above are applicable to our setting, without any significant modification. There are several ways to execute Harper's method in our setting, and we will use one that is slightly different from Harper's original approach, but aligns more with the approach of Heap \cite{Heap1}. 	
	
	The reader may have observed that the result $$I_{k_1,\dots,k_r}(\pi_1,\dots,\pi_r) \ll T(\log T)^{k_1^2+\dots+k_r^2+\varepsilon}$$ of Milinovich and Turnage-Butterbaugh does not assume GRC, but Hypothesis H instead, which is a weaker assumption. We believe that this could be done in this setting as well, at the cost of more unpleasant combinatorics in our proof. 
	
	It remains an interesting question how much of our work that could be recovered unconditionally, especially the lower moment bound, because the analogous bounds for $\zeta(s)$ are unconditional. We believe that it is possible to establish the order of lower moments if we restrict ourselves to low-degree $L$-functions. A natural starting point for such an investigation could be the moments $$\int_{1}^T |\zeta(\tfrac12+it)|^{2k_1}|L(\tfrac12+it,\chi)|^{2k_2}\dt,$$ with $0<k_1,k_2\leq 2$. 
	
	Another interesting question to ask is to what extent Theorem \ref{mainThmLowerBound} and \ref{mainThmUpperBound} can be made uniform in $k_1,\dots,k_r$. This would be desireable as uniformity in certain ranges of $k_1,\dots,k_r$, would let us vary $V_1,\dots,V_r$ in some window in Theorem \ref{largeDeviationsCorollary}, instead of keeping them fixed.
	
	The rest of the paper is divided into five more sections. In Section 2 we survey the theory of automorphic $L$-functions, and prove some estimates on the coefficients of these $L$-functions that we will need later. In Section 3 we prove that $I_{k_1,\dots,k_r}(\pi_1,\dots,\pi_r) \gg T(\log T)^{k_1^2+\dots+k_r^2}$, before we in Section 4 prove the opposite $I_{k_1,\dots,k_r}(\pi_1,\dots,\pi_r)\ll T(\log T)^{k_1^2+\dots+k_r^2}$. Finally, we prove Theorem \ref{largeDeviationsCorollary} as well as Corollary \ref{hurwitzCorollary} in the last section.
	
	\section*{Acknowledgements}
	The author is very grateful to his supervisors Winston Heap and Kristian Seip for a lot of insightful discussions and advice, as well as encouragement. The author would also like to thank Junxian Li and Anurag Sahay for taking time to read parts of the manuscript, as well as a comment from Adam Harper that lead to an improvement in Corollary \ref{hurwitzCorollary}.
	
	\section{Background on automorphic $L$-functions and some estimates}
	We will start this section by surveying the theory of automorphic $L$-functions, following Rudnick--Sarnak \cite{RudSar1} and Iwaniec--Kowalski \cite{IwaniecKowalski1}. Afterwards we will derive some estimates we will need later. To this end, let $\pi$ be an irreducible cuspidal automorphic representation of $\text{GL}(m)$ over $\mathbb{Q}$, with unitary central character. For each prime $p$, we define a local Euler product 
	$$L_p(s,\pi)\coloneq\prod_{j=1}^m \left(1-\frac{\alpha_{\pi}(j,p)}{p^s}\right)^{-1}.$$ 
	We shall not be concerned about the origin nor the definition of those $\alpha_{\pi}(j,p)$, but mention that they are associated to $\pi$. For all, but finitely many $p$, the Generalized Ramanujan Conjecture (GRC) predicts that 
	\begin{equation}\label{ramPetConj}
		|\alpha_\pi(j,p)|=1.
	\end{equation}
	For the remaining $p$, the conjecture predicts that $|\alpha(j,p)|\leq 1$ (see e.g. \cite[p. 95]{IwaniecKowalski1}). In \cite{RudSar1}, Rudnick and Sarnak proved that
	\begin{equation}\label{RudSarRamPet}
		|\alpha_\pi(j,p)|\leq p^{\frac12-\frac{1}{m^2+1}}.
	\end{equation}
	From these local factors we define the $L$-function associated to $\pi$ by $$L(s,\pi)\coloneq\prod_{p<\infty} L_p(s,\pi).$$ By the theory of Rankin--Selberg functions, we have absolute convergence of $L(s,\pi)$ for $\text{Re}(s)>1$ (see \cite[Chapter 5.12]{IwaniecKowalski1} and the references therein). We say that this $L$-function is of degree $m$ (according to the number of factors in the local factors $L_{p}(s,\pi)$)
	
	To make this precise, we need some defintions. Let\footnote{The $\pi$ in the formula below is the numerical constant, and should not be confused with the representation $\pi$.} $$\Gamma_{\mathbb{R}}(s)\coloneq\Gamma(s/2)\pi^{-s/2},$$ and $$L_{\infty}(s,\pi)\coloneq\prod_{j=1}^m \Gamma_{\mathbb{R}}(s+\mu_\pi(j)).$$ Here, $\mu_{\pi}(j)$ are constants depending on $\pi$. They satisfy\footnote{This is not the best bound. See \cite[Ch. 5.12]{IwaniecKowalski1}.} $$\text{Re }{\mu_\pi(j)}>-\tfrac12.$$ From the representation $\pi$, we can associate a new representation $\widetilde{\pi}$, called its contragradient. This is again a new irreducible cuspidal automorphic representation, and locally it is equivalent to the representation $\overline{\pi_{p}}$ for $p\leq \infty$, which means in particular that $$\{\alpha_{\widetilde{\pi}}(j,p)\}=\{\overline{\alpha_{\pi}(k,p)}\} \qquad \{\mu_{\widetilde{\pi}}(j)\} = \{\overline{\mu_\pi(j)}\}.$$ 
	
	Let now $\Phi(s,\pi)\coloneq L(s,\pi)L_{\infty}(s,\pi)$ be the completed $L$-function of $L(s,\pi)$. Then it has been proved (see Rudnick--Sarnak \cite{RudSar1} and the references therein) that 
	\begin{enumerate}
		\item $\Phi(s,\pi)$ extends to an entire function of order one, in all cases except the case where $L(s,\pi)$ is the Riemann zeta function.
		\item We have the following functional equation $$\Phi(s,\pi)=N^{1/2-s}\epsilon_{\pi}\Phi(1-s,\widetilde{\pi})$$ where $N>1$ is some integer and $\epsilon_{\pi}$ has absolute value $1$.
		\item $\Phi(s,\pi)$ is bounded in vertical strips.
	\end{enumerate}
	
	{Throughout this paper we will assume that $L(s,\pi)$ has central value $1/2$, so that in the case $\Phi(s,\pi)$ is not entire, it is truly the Riemann zeta function. Technically speaking, without this assumption, $L(s,|\cdot|^{it})=\zeta(s+it)$ is also a completely valid function in this framework. As far as the author understands, this assumption seem to be implicit in some of the literature, see e.g. Rudnick--Sarnak \cite{RudSar1}.}
	
	The most famous conjecture for these $L$-functions is GRH. It states that all the zeroes of $\rho_{\pi}$ of $\Phi(s,\pi)$ (equivalently all the non-trivial zeroes of $L(s,\pi)$) have $\text{Re}(\rho_\pi)=1/2$. A consequence of of GRH is the Generalized Lindelöf Hypothesis (GLH) (see e.g. \cite[Corollary 5.20]{IwaniecKowalski1}). It states (in the $t$-aspect) that $$L(\tfrac12+it,\pi) \ll t^\varepsilon$$ for any $\varepsilon>0$, where the implicit constant depend on $\pi$ and $\varepsilon$. 
	
	\subsection{The coefficients of $L(s,\pi)$}
	{For $\text{Re}(s)>1$ we have $$L(s,\pi)=\sum_{n \geq 1} \frac{A_{\pi}(n)}{n^s}=\prod_p \prod_{j=1}^m \left(1-\frac{\alpha_{\pi}(j,p)}{p^s}\right)^{-1}.$$Logarithmically differentiating the Euler product for $L(s,\pi)$ above, we obtain a convergent Dirichlet series
	\begin{equation}\label{logarithmicDerivativeDefinition}
		\frac{\text{d}}{\text{d}s} \log L(s,\pi) = -\sum_{n\geq 1} \frac{\Lambda_{\pi}(n)}{n^s}\coloneq-\sum_{n\geq 2}\frac{\Lambda(n)a_{\pi}(n)}{n^s}
	\end{equation}
	for $\text{Re}(s)>1$, where $a_{\pi}(n)$ is defined implicitly. Here, $\Lambda_\pi(n)$ is supported on prime powers, and we have $$\Lambda_\pi(p^\ell) = \sum_{j=1}^{m} \alpha_{\pi}(j,p)^\ell \log p = \Lambda(p^\ell)\sum_{j=1}^{m} \alpha_{\pi}(j,p)^\ell.$$ }
	
	The behaviour of the coefficients $\Lambda_\pi(n)$ will play a key role in the proof of Theorem \ref{mainThmUpperBound}. In general, not much is known about their nature. Unconditionally, we start by observing that (\ref{RudSarRamPet}) implies 
	\begin{equation} \label{unconditionalVonMangoldtBound}
		|\Lambda_{\pi}(n)|\leq m\Lambda(n)n^{\frac12 - \frac{1}{m^2+1}}.
	\end{equation}
	Assuming GRC however, we have the much stronger 
	\begin{equation}\label{conditionalVonMangoldtBound}
		|\Lambda_{\pi}(n)|\leq m\Lambda(n).
	\end{equation}

	As we are working with several $L$-functions simultaneously, we have to understand the correlation between them. Again, we lack solid understanding of this in general, apart from some low-dimensional cases. However, since we are assuming GRC\footnote{ Selberg's orthogonality conjecture follows from less strong assumptions as well, like Hypothesis H.} we have Selberg's orthogonality conjecture at our disposal, which tells us a lot about the correlation. It was first formulated by Selberg in \cite{Sel2}, albeit in a different context. Again, we refer the reader to \cite[Theorem 2.2]{MilinoButter1} and the references therein for a proof of this. \newline
	
	\noindent \textbf{Selberg's orthogonality conjecture.} Let $\pi$ and $\pi'$ be irreducible cuspidal automorphic representations of $\text{GL}(m)$ and $\text{GL}(m')$ over $\mathbb{Q}$ respectively, both with unitary central character. If $x\geq 3$, we have 
	$$\sum_{p \leq x} \frac{a_{\pi}(p)\overline{a_{\pi'}(p)}}{p} = \left\{\begin{matrix} \log\log x + O(1), & \text{if }\pi\cong \pi' \\ O(1), & \text{ if }\pi \not\cong \pi'.   \end{matrix}\right.$$
	
	Selberg's orthogonality conjecture holds whenever $\max(m,m')\leq 4$ (see \cite[Theorem 2.2]{MilinoButter1} and the references therein.) If we have distinct irreducible cuspidal automorphic representations $\pi_1,\dots,\pi_r$ with unitary central characters, and positive real numbers $k_1,\dots,k_r$, we notice that the conjecture implies $$\sum_{p \leq x} \frac{|k_1a_{\pi_1}(p)+\dots+k_ra_{\pi_r}(p)|^2}{p} =(k_1^2+\dots+k_r^2)\log\log x + O(1)$$

	\subsection{The coefficients of $L^{k_1}(s,\pi_1)\cdots L^{k_r}(s,\pi_r)$}
	For the proof of Theorem \ref{mainThmLowerBound}, we will need to understand the coefficients of 
	$$L^{k_1}(s,\pi_1)\cdots L^{k_r}(s,\pi_r)$$ for $k_i \in \mathbb{R}$ and $\text{Re}(s)>1$. We start by writing 
	$$L^{k_1}(s,\pi_1)\cdots L^{k_r}(s,\pi_r)=\exp\left(k_1\log L(s,\pi_1)+\dots+k_r\log L(s,\pi_r)\right).$$ We choose a branch of the logarithm so that $$\log L(s,\pi_i)=\sum_p \sum_{j=1}^{m_i}\sum_{\ell=1}^\infty \frac{\alpha_{\pi_i}(j,p)^\ell}{\ell p^{\ell s}}.$$ This, in turn defines a branch of $L^{k_1}(s,\pi_1)\cdots L^{k_r}(s,\pi_r)$ by $$L^{k_1}(s,\pi_1)\cdots L^{k_r}(s,\pi_r) = \prod_{i=1}^r\prod_{p} \prod_{j=1}^{m_i} \left(1-\frac{\alpha_{\pi_i}(j,p)}{p^s}\right)^{-k_i}.$$ For $|z|<1$, we have the following Taylor series expansion $$(1-z)^{-k} =\sum_{\ell=0}^\infty \frac{\Gamma(k+\ell)}{\Gamma(k)\ell!}z^\ell.$$ Observe that by (\ref{RudSarRamPet}), $$\left|\frac{\alpha_{\pi_i}(j,p)}{p^{s}}\right|\leq p^{1/2-\frac{1}{m^2+1}-\text{Re}(s)}<1$$ for $\text{Re}(s)>1$. Let us define $$d_k(p^\ell)\coloneq \frac{\Gamma(k+\ell)}{\Gamma(k)\ell!}$$ and extend multiplicatively. Furthermore we define 
	$$h_{k_i}(p^\ell) \coloneq \sum_{\ell_1+\dots+\ell_{m_i} =\ell, \ell_\rho \geq 0} d_{k_i}(p^{\ell_1})\alpha_{\pi_i}(1,p)^{\ell_1}\cdots d_{k_i}(p^{\ell_{m_i}}) \alpha_{\pi_i}(m_i,p)^{\ell_{m_i}},$$
	and extend multiplicatively. Notice in particular that $h_{k_i}(p)=k_ia_{\pi_i}(p)$. Let us furthermore define 
	$$\mathbf{h}_{k_1,\dots,k_r}(n) \coloneq \sum_{n_1\cdots n_r = n, n_i \geq 1} h_{k_1}(n_1)\cdots h_{k_r}(n_r).$$ Again we notice that $\mathbf{h}_{k_1,\dots,k_r}(p)=k_1a_{\pi_1}(p)+\dots+k_ra_{\pi_r}(p)$.
	The formulas above yield the following expression
	\begin{align*}
		L^{k_1}(s,\pi_1)\cdots L^{k_r}(s,\pi_r) &= \prod_{i=1}^r\prod_{p} \left(\sum_{\ell =0}^\infty \frac{h_{k_i}(p^\ell)}{p^{\ell s}}\right) \\
		&= \prod_{i=1}^r \sum_{n=1}^\infty \frac{h_{k_i}(n)}{n^s} \\
		&= \sum_{n=1}^\infty \frac{\mathbf{h}_{k_1,\dots,k_r}(n)}{n^s}
	\end{align*}
	for $\text{Re}(s)>1$. 
	
	We now gather a few lemmata for partial sums of those coefficients.
	
	\begin{lem}\label{h_kHigherCoeffsCanBeIgnored}
		Let $\pi_1,\dots,\pi_r$ be distinct irreducible cuspidal automorphic representations of $\textup{GL}(m_1),\dots,\textup{GL}(m_r)$ over $\mathbb{Q}$ respectively, with unitary central character. Let $k_1,\dots,k_r>0$. Assume GRC. Then $$|\mathbf{h}_{k_1,\dots,k_r}(p^\ell)|\ll p^{\ell\varepsilon},$$ for any $\varepsilon>0$. Consequently, $$\sum_{p,\ell\geq 2} \frac{|\mathbf{h}_{k_1,\dots,k_r}(p^\ell)|^2}{p^\ell}<\infty.$$
	\end{lem}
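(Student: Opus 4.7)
The plan is to bound $|\mathbf{h}_{k_1,\dots,k_r}(p^\ell)|$ by a generalized divisor coefficient $d_K(p^\ell)$ with $K = k_1m_1+\dots+k_rm_r$, and then show this is $\ll_\varepsilon p^{\ell\varepsilon}$ uniformly in $p$ and $\ell$. The second (convergence) statement will then follow immediately by picking $\varepsilon$ small.

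First, under GRC we have $|\alpha_{\pi_i}(j,p)|\leq 1$ for every $i,j,p$. Plugging this into the definition of $h_{k_i}(p^\ell)$, the triangle inequality gives
$$|h_{k_i}(p^\ell)| \leq \sum_{\ell_1+\dots+\ell_{m_i}=\ell,\, \ell_\rho\geq 0} d_{k_i}(p^{\ell_1})\cdots d_{k_i}(p^{\ell_{m_i}}).$$
The right-hand side is precisely the $\ell$-th Taylor coefficient of $\bigl((1-z)^{-k_i}\bigr)^{m_i}=(1-z)^{-k_im_i}$, namely $d_{k_im_i}(p^\ell)$. Substituting this into the definition of $\mathbf{h}_{k_1,\dots,k_r}(p^\ell)$ and applying the same convolution-generating-function identity once more yields
$$|\mathbf{h}_{k_1,\dots,k_r}(p^\ell)| \leq d_{K}(p^\ell), \qquad K := k_1m_1+\dots+k_rm_r.$$

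Next, I would show $d_K(p^\ell)\ll_\varepsilon p^{\ell\varepsilon}$ uniformly in $p\geq 2,\ell\geq 0$. Since $d_K(p^\ell)=\Gamma(K+\ell)/(\Gamma(K)\Gamma(\ell+1))$, Stirling gives $d_K(p^\ell)\ll_K (\ell+1)^{K-1}$ for all $\ell\geq 0$. Because $p^{\ell\varepsilon}\geq 2^{\ell\varepsilon}$ grows exponentially in $\ell$ while $(\ell+1)^{K-1}$ only polynomially, there is a constant $C_{\varepsilon,K}$ with $(\ell+1)^{K-1}\leq C_{\varepsilon,K}\, 2^{\ell\varepsilon}\leq C_{\varepsilon,K}\, p^{\ell\varepsilon}$ for all $p\geq 2$, $\ell\geq 0$. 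This gives the first claimed bound.

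For the second claim, choose $\varepsilon < 1/4$. Then
$$\sum_{p}\sum_{\ell\geq 2} \frac{|\mathbf{h}_{k_1,\dots,k_r}(p^\ell)|^2}{p^\ell} \ll \sum_p \sum_{\ell\geq 2} \frac{1}{p^{\ell(1-2\varepsilon)}} \ll \sum_p \frac{1}{p^{2(1-2\varepsilon)}},$$
where the inner geometric sum is summed and the leading term retained. Since $2(1-2\varepsilon)>1$, the sum over primes converges, finishing the proof.

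There is no real obstacle here; the only mild subtlety is recognising the cascade of convolution identities $(1-z)^{-k_i}\cdot m_i$ factors $\to (1-z)^{-k_im_i}$ and then across the $r$ representations $\to (1-z)^{-K}$, which collapses the multi-index sum defining $\mathbf{h}_{k_1,\dots,k_r}(p^\ell)$ to a single generalized divisor coefficient. Once that is observed, the standard divisor-function-type bound $d_K(n)\ll_\varepsilon n^\varepsilon$ (specialized to prime powers) finishes the argument cleanly.
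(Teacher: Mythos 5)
Your proof is correct and follows essentially the same route as the paper: bound $|\alpha_{\pi_i}(j,p)|\leq 1$ by GRC, push the triangle inequality through the nested convolutions defining $\mathbf h_{k_1,\dots,k_r}(p^\ell)$ to reduce to generalized divisor coefficients, then use $d_K(p^\ell)\ll_\varepsilon p^{\ell\varepsilon}$ and geometric summation. The one small difference is that you collapse the nested sum exactly to $d_K(p^\ell)$ with $K=k_1m_1+\dots+k_rm_r$ via the identity $(1-z)^{-a}(1-z)^{-b}=(1-z)^{-a-b}$, whereas the paper settles for the coarser bound $d_{\max_i\{rk_im_i\}}(p^\ell)$ (citing \cite[Lemma 1]{Heath-Brown1}); both are $\ll p^{\ell\varepsilon}$, so this is a cosmetic improvement, and your write-out of the Stirling estimate and the tail sum is a self-contained version of what the paper defers to a reference.
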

	
	\begin{proof}
		This follows from standard divisor coefficient facts (see e.g. \cite[Lemma 1]{Heath-Brown1}):
		\begin{align*}
			\mathbf{h}_{k_1,\dots,k_r}(p^\ell) &\ll \sum_{\substack{\ell_1+\dots+\ell_r=\ell, \\ \ell_j\geq 0}} |h_{k_1}(p^{\ell_1})|\cdots |h_{k_r}(p^{\ell_r})| \\
			&\ll \sum_{\substack{\ell_1+\dots+\ell_r=\ell, \\ \ell_j\geq 0}} \Big{(} \prod_{\eta=1}^r \Big{(}\sum_{\substack{r_1+\dots+r_{m_\eta}=\ell_\eta,\\ r_j \geq 0}} |d_{k_\eta}(p^{r_1})|\cdots |d_{k_\eta}(p^{r_{m_\eta}})|\Big{)}\Big{)} \\
			&\ll \sum_{\substack{\ell_1+\dots+\ell_r=\ell, \\ \ell_j\geq 0}} d_{k_1m_1}(p^{\ell_1})\cdots d_{k_rm_r}(p^{\ell_r}) \\
			&\ll d_{\max_{1 \leq i \leq r}\{rk_im_i\}}(p^\ell) \ll p^{\varepsilon \ell}.
		\end{align*}
	\end{proof}
	
	\begin{lem}\label{h_kCoeffsAsymps}
		Let $\pi_1,\dots,\pi_r$ be distinct irreducible cuspidal automorphic representations of $\textup{GL}(m_1),\dots,\textup{GL}(m_r)$ over $\mathbb{Q}$ respectively, with unitary central character. Let $k_1,\dots,k_r>0$. Then there exists a constant $C_{k_1,\dots,k_r}>0$ such that
		\begin{align*}
			\textup{(i)} &\enspace  \sum_{n \leq N} \frac{|\mathbf{h}_{k_1,\dots,k_r}(n)|^2}{n^{2\sigma}} \asymp \left(\sigma-\frac12\right)^{-(k_1^2+\dots+k_r^2)} \enspace \textup{uniformly for } \frac{1}{2}+\frac{C_{k_1,\dots,k_r}}{\log N}\leq \sigma \leq 1, \\
			\textup{(ii)} &\enspace \sum_{n \leq N} \frac{|\mathbf{h}_{k_1,\dots,k_r}(n)|^2}{n} \asymp (\log N)^{k_1^2+\dots+k_r^2}.
		\end{align*}
	\end{lem}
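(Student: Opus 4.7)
The plan is to reduce both estimates to an asymptotic for the full Dirichlet series
$$F(s) \coloneq \sum_{n\geqs 1} \frac{|\mathbf{h}_{k_1,\dots,k_r}(n)|^2}{n^s}$$
as $s\to 1^+$, and then pass to partial sums by a crude tail estimate. Since $\mathbf{h}_{k_1,\dots,k_r}$ is a Dirichlet convolution of multiplicative functions it is multiplicative, and hence so is $|\mathbf{h}_{k_1,\dots,k_r}|^2$. Combining the resulting Euler product of $F(s)$ on $\textup{Re}(s)>1$ with $\log(1+x) = x+O(x^2)$, Lemma \ref{h_kHigherCoeffsCanBeIgnored} to handle the $\ell\geqs 2$ prime-power contributions, and GRC to bound $|\mathbf{h}_{k_1,\dots,k_r}(p)|$, I would obtain
$$\log F(s) = \sum_p \frac{|\mathbf{h}_{k_1,\dots,k_r}(p)|^2}{p^s} + O(1), \qquad \textup{Re}(s)>1.$$
Writing $K \coloneq k_1^2+\dots+k_r^2$, Selberg's orthogonality applied to $\mathbf{h}_{k_1,\dots,k_r}(p) = k_1 a_{\pi_1}(p)+\dots+k_r a_{\pi_r}(p)$ yields $\sum_{p\leqs x}|\mathbf{h}_{k_1,\dots,k_r}(p)|^2/p = K\log\log x + O(1)$; a standard partial summation then produces $\sum_p |\mathbf{h}_{k_1,\dots,k_r}(p)|^2/p^s = K\log(1/(s-1))+O(1)$ as $s\to 1^+$. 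Exponentiating, $F(s)\asymp (s-1)^{-K}$ in this range.

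For part (i), the upper bound is immediate from $\sum_{n\leqs N} \leqs F(2\sigma)\asymp (\sigma - 1/2)^{-K}$. For the matching lower bound, I would split $n^{2\sigma} = n^{\sigma+1/2}\cdot n^{\sigma-1/2}$ to obtain the tail estimate
$$\sum_{n>N} \frac{|\mathbf{h}_{k_1,\dots,k_r}(n)|^2}{n^{2\sigma}} \leqs N^{-(\sigma-1/2)} F(\sigma+1/2) \ll N^{-(\sigma-1/2)}(\sigma - 1/2)^{-K}.$$
When $\sigma - 1/2 \geqs C_{k_1,\dots,k_r}/\log N$, the prefactor $N^{-(\sigma-1/2)}$ is at most $e^{-C_{k_1,\dots,k_r}}$, which can be made smaller than half the implicit lower constant in $F(2\sigma)\asymp (\sigma-1/2)^{-K}$ by choosing $C_{k_1,\dots,k_r}$ large enough. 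Consequently $\sum_{n\leqs N}\geqs \tfrac12 F(2\sigma)\gg (\sigma-1/2)^{-K}$, establishing (i).

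For part (ii), I would specialise (i) at $\sigma = 1/2 + C_{k_1,\dots,k_r}/\log N$, where $(\sigma-1/2)^{-K}\asymp(\log N)^K$. For every $1\leqs n\leqs N$, the ratio $n^{2\sigma-1} \leqs N^{2(\sigma-1/2)} = e^{2C_{k_1,\dots,k_r}}$ is bounded from below by $1$ and from above by a constant, so
$$\sum_{n\leqs N}\frac{|\mathbf{h}_{k_1,\dots,k_r}(n)|^2}{n} \asymp \sum_{n\leqs N}\frac{|\mathbf{h}_{k_1,\dots,k_r}(n)|^2}{n^{2\sigma}} \asymp (\log N)^K.$$
The only substantive step in the whole argument is the asymptotic $\log F(s) = K\log(1/(s-1))+O(1)$, which rests on Selberg's orthogonality (hence on GRC) together with Lemma \ref{h_kHigherCoeffsCanBeIgnored}; both ingredients are already in place, so I do not expect any serious obstacle in the remainder of the proof.
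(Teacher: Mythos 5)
Your proof is correct, and while it rests on the same core asymptotic (the prime sum $\sum_p |\mathbf{h}_{k_1,\dots,k_r}(p)|^2 p^{-s}$ controlled via Selberg orthogonality, with Lemma \ref{h_kHigherCoeffsCanBeIgnored} disposing of the higher prime powers), the passage from that asymptotic to the partial-sum estimate in (i) is genuinely different and noticeably cleaner than the paper's. The paper truncates to $N$-smooth numbers for the upper bound and to $Y$-smooth numbers (with $Y=N^{\theta}$) for the lower bound, then controls the resulting Rankin tail by a small shift $\delta=1/\log Y$; this introduces two auxiliary parameters $\theta,\delta$ that must be tuned, and it uses the exponential integral $E_1$ to estimate the truncated prime sum $\sum_{p\leq Y}|\mathbf{h}(p)|^2 p^{-2\sigma}$. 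You instead work with the full Dirichlet series $F(s)$, prove $F(s)\asymp (s-1)^{-K}$ by partial summation, take the trivial upper bound $\sum_{n\leq N}\leq F(2\sigma)$, and bound the tail by borrowing the factor $N^{-(\sigma-1/2)}$ from each $n>N$, giving $\sum_{n>N}|\mathbf{h}(n)|^2n^{-2\sigma}\leq N^{-(\sigma-1/2)}F(\sigma+\tfrac12)\ll e^{-C_{k_1,\dots,k_r}}(\sigma-\tfrac12)^{-K}$. This replaces the two-parameter smooth-number Rankin argument by a single exponent shift of size $\sigma-\tfrac12$, makes the role of $C_{k_1,\dots,k_r}$ transparent, and as a bonus gives the upper bound in (i) for every $\sigma>\tfrac12$ rather than only above the threshold (the paper's upper bound also needs $\sigma\geq\tfrac12+C/\log N$, since it estimates the truncated prime sum $\sum_{p\leq N}$). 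Your deduction of (ii), by setting $\sigma=\tfrac12+C_{k_1,\dots,k_r}/\log N$ and noting $1\leq n^{2\sigma-1}\leq e^{2C_{k_1,\dots,k_r}}$ for $n\leq N$, is exactly the ``follows readily'' step the paper leaves implicit.
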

	
	\begin{proof}
		The second point follows readily from the first, so we will only prove the first point. {The statement is clear when $\sigma$ is bounded away from $\frac12$ by an absolute constant, because the sum is bounded in that case. Thus it suffices to consider the case when $\sigma$ is close to $\frac12$. Let us by $S(Y)$ denote the set of positive integers such that $p \mid n \implies p \leq Y$, where $p$ is some prime number. In the proceeding, we find that we need asymptotics for the following function $E_1(x) \coloneq \int_{x}^\infty e^{-t}/t \, \text{d}t$. We have $$E_1(x)=-\gamma-\log x - \sum_{k=1}^\infty \frac{(-x)^k}{k!k}$$ for $x>0$. Hence by this asymptotic for small $x$, integration by parts and Selberg orthogonality, we have 
			\begin{align*}
				\sum_{p} \frac{|\mathbf{h}_{k_1,\dots,k_r}(p)|^2}{p^{2\sigma}} &= (1+o(1))(k_1^2+\dots+k_r^2)\int_{3}^\infty \frac{\textup{d}t}{t^{2\sigma}\log t} \\
				&= (1+o(1))(k_1^2+\dots+k_r^2)E_1((2\sigma-1)\log 3) \\
				&= (1+o(1))\log\left((\sigma-\tfrac12)^{-(k_1^2+\dots+k_r^2)}\right).
		\end{align*} }Thus
		\begin{align*}
			\sum_{p \leq Y} \frac{|\mathbf{h}_{k_1,\dots,k_r}(p)|^2}{p^{2\sigma}} &= \sum_{p} \frac{|\mathbf{h}_{k_1,\dots,k_r}(p)|^2}{p^{2\sigma}} - \sum_{p > Y} \frac{|\mathbf{h}_{k_1,\dots,k_r}(p)|^2}{p^{2\sigma}} \\ 
			&= \log\left((\sigma-\tfrac12)^{-(k_1^2+\dots+k_r^2)}\right) + O\left((k_1^2+\dots+k_r^2)E_1((2\sigma-1)\log Y)\right),
		\end{align*}
		for any $Y>1$. We want to ensure that the error term here is $O(1)$, and for that we have to ensure that $(k_1^2+\dots+k_r^2)(2\sigma-1)\log Y$ is bounded below by some positive constant $C$. In other words, we need $\sigma \geq \frac12 + \frac{C}{(k_1^2+\dots+k_r^2)\log Y}$. The analysis above does not impose any condition on this constant - only that it has to be positive and absolute. We will choose it towards the end of the proof. Thus we conclude under this condition that
		\begin{equation}\label{hkCoeffsAsymptoticWithO(1)ErrorTerm}
			\sum_{p \leq Y} \frac{|\mathbf{h}_{k_1,\dots,k_r}(p)|^2}{p^{2\sigma}} = \log\left((\sigma-\tfrac12)^{-(k_1^2+\dots+k_r^2)}\right) + O\left(1\right).
		\end{equation}
		
		We first deal with the upper bound implicit in (i). Bounding by $N$-smooth numbers we have
		$$\sum_{n \leq N} \frac{|\mathbf{h}_{k_1,\dots,k_r}(n)|^2}{n^{2\sigma}} \leq \sum_{n \in S(N)} \frac{|\mathbf{h}_{k_1,\dots,k_r}(n)|^2}{n^{2\sigma}} = \prod_{p \leq N} \left(1+\frac{|\mathbf{h}_{k_1,\dots,k_r}(p)|^2}{p^{2\sigma}}+\frac{|\mathbf{h}_{k_1,\dots,k_r}(p^2)|^2}{p^{4\sigma}}+\dots\right).$$ 
		Because $\ell\geq 2$, $$\sum_{p,\ell \geq 2} \frac{|\mathbf{h}_{k_1,\dots,k_r}(p^\ell)|^2}{p^{2\ell\sigma}}$$ converges\footnote{Observe that this sum converges for any $\sigma\geq 1/2$, in particular independently of $N$.} by Lemma \ref{h_kHigherCoeffsCanBeIgnored}, and so on applying $1+x \leq \exp(x)$ and (\ref{hkCoeffsAsymptoticWithO(1)ErrorTerm}) we get the desired upper bound
		$$\sum_{n \leq N} \frac{|\mathbf{h}_{k_1,\dots,k_r}(n)|^2}{n^{2\sigma}} \ll \exp\left(\sum_{p \leq N} \frac{|\mathbf{h}_{k_1,\dots,k_r}(p)|^2}{p^{2\sigma}} \right)\asymp\left(\sigma-\frac12\right)^{-(k_1^2+\dots+k_r^2)},$$ assuming $\sigma \geq \frac12 + \frac{C_1}{(k_1^2+\dots+k_r^2)\log N}$ for some unspecified constant $C_1>0$. Observe again that the value of this constant does not matter at this point, as long as we keep $\frac{C_1}{(k_1^2+\dots+k_r^2)}$ bounded away from zero. 
		
		Let us now turn to the lower bound. To this end, let $Y=N^{\theta}$ with $0<\theta<1$, and assume $\sigma \geq \frac12 + \frac{C_2}{(k_1^2+\dots+k_r^2)\log Y}$. We have
		\begin{equation}\label{rankinLowerBoundStart}
			\sum_{n \leq N} \frac{|\mathbf{h}_{k_1,\dots,k_r}(n)|^2}{n^{2\sigma}} \geq  \sum_{n \in S(Y)} \frac{|\mathbf{h}_{k_1,\dots,k_r}(n)|^2}{n^{2\sigma}} - \sum_{\substack{n >N, \\ n \in S(Y)}} \frac{|\mathbf{h}_{k_1,\dots,k_r}(n)|^2}{n^{2\sigma}}.
		\end{equation}
		Following the proof of the upper bound, we have that the first term is $$\geq B\left(\sigma-\frac12\right)^{-(k_1^2+\dots+k_r^2)}$$ for some absolute fixed constant $B$. For the error term we apply Rankin's trick and approximate $p^\delta$ by its Taylor series. Let $\delta=1/\log Y$. Then 
		\begin{align*}
			&\sum_{\substack{n >N, \\ n \in S(Y)}} \frac{|\mathbf{h}_{k_1,\dots,k_r}(n)|^2}{n^{2\sigma}} \\
			\leq &\frac{1}{N^\delta}\sum_{n \in S(Y)} \frac{|\mathbf{h}_{k_1,\dots,k_r}(n)|^2}{n^{2\sigma-\delta}} \\
			\ll &\frac{1}{N^{\delta}} \prod_{p \leq Y} \left(1+\frac{|\mathbf{h}_{k_1,\dots,k_r}(p)|^2}{p^{2\sigma-\delta}}\right) \\
			\ll &\exp\left(-\delta\log N + \sum_{p \leq Y} \frac{|\mathbf{h}_{k_1,\dots,k_r}(p)|^2}{p^{2\sigma}} + +O\left(\delta\sum_{p \leq Y}\frac{|\mathbf{h}_{k_1,\dots,k_r}(p)|^2\log p}{p^{2\sigma}} \right)\right) \\
			\ll  &\left(\sigma-\frac12\right)^{-(k_1^2+\dots+k_r^2)}\exp\left(-\delta \log N + O\left(\delta (k_1^2+\dots+k_r^2)\frac{Y^{1-2\sigma}}{2\sigma-1}\right)\right).
		\end{align*}
		Here we have used that $\delta\log p \leq 1$. Now we choose $\theta=1/A$ for $A$ some large constant to be determined later. Then, recalling $\sigma \geq \frac12 + \frac{C_2}{(k_1^2+\dots+k_r^2)\log Y}$, the latter factor in the last displayed line above is $$\leq \exp(-A+ (k_1^2+\dots+k_r^2)^2\frac{1}{C_2}),$$ and thus taking $A$ really large, and $C_2=C_4(k_1^2+\dots+k_r^2)^2$ for $C_4$ very big, we can make this as small as we want. In particular we can have $$\sum_{\substack{n >N, \\ n \in S(Y)}} \frac{|\mathbf{h}_{k_1,\dots,k_r}(n)|^2}{n^{2\sigma}} \leq \frac{B}{2}\left(\sigma-\frac12\right)^{-(k_1^2+\dots+k_r^2)},$$ which going back to (\ref{rankinLowerBoundStart}) yields
		$$\sum_{n \leq N} \frac{|\mathbf{h}_{k_1,\dots,k_r}(n)|^2}{n^{2\sigma}} \geq \frac{B}{2} \left(\sigma-\frac12\right)^{-(k_1^2+\dots+k_r^2)}$$ 
		
		To finish the proof, we observe that the criteria $\sigma \geq \frac12 + \frac{C_2}{(k_1^2+\dots+k_r^2)\log Y}$ translates to $\sigma \geq \frac12 + \frac{C_4A(k_1^2+\dots+k_r^2)}{\log N}$, so we choose $C_{k_1,\dots,k_r}=C_4A(k_1^2+\dots+k_r^2).$ We also take $\frac{C_1}{k_1^2+\dots+k_r^2}=C_{k_1,\dots,k_r}$ in the proof of the upper bound.
		
	\end{proof}
	
	\section{Lower bound for moments}
	We will need the following proposition in this section.

	\begin{prop}\label{generalizedGabriel1}
		Let $\alpha,\beta,\gamma$ be given such that $\alpha \leq \gamma \leq\beta$ with $\beta-\alpha>C$ for some fixed positive constant $C$. Assume that $f(z)$ is a holomorphic function in the strip $\alpha+\varepsilon < \textup{Re}(z) < \beta$, continuous on $\textup{Re}(z)=\beta$ and $\textup{Re}(z)=\alpha+\varepsilon$ for any $\varepsilon>0$. Suppose $f(z)\to 0$ as $|\textup{Im}(z)|\to \infty$. Assume that $$\lim_{\varepsilon \searrow 0} \int_{-\infty}^\infty |f(\alpha+\varepsilon+it)|^2 \, \textup{d}t$$ exists. Then
		$$\int_{-\infty}^{\infty} |f(\gamma+it)|^2 \, \textup{d}t \leq \left(\int_{-\infty}^\infty |f(\alpha+it)|^{2}\right)^{\frac{\beta-\gamma}{\beta-\alpha}} \left(\int_{-\infty}^\infty |f(\beta+it)|^2\right)^{\frac{\gamma-\alpha}{\beta-\alpha}}.$$
	\end{prop}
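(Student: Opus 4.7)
The plan is to deduce this $L^2$ three-lines inequality from the classical (scalar) Hadamard three-lines theorem via duality. For any $g \in L^2(\mathbb{R})$ with compact support, introduce the auxiliary function
$$G(z) := \int_{-\infty}^\infty f(z+it)\,\overline{g(t)}\,\textup{d}t,$$
defined for $z$ in the open strip $\alpha < \textup{Re}(z) < \beta$. The key point is that $G$ is a single holomorphic function of the complex variable $z$, whereas $\|f(x+i\,\cdot\,)\|_{L^2}$ is not holomorphic in $x$, so we cannot apply Hadamard directly to the quantity of interest.

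I would first verify that $G$ is holomorphic on the open strip, which follows from the holomorphy of $f$ together with the fact that $g$ has compact support (so one may differentiate under the integral sign without any scruple). By Cauchy--Schwarz, for $z = x+iy$ in the strip,
$$|G(z)| = \biggl|\int f(x+i(y+t))\,\overline{g(t)}\,\textup{d}t\biggr| \leq \|f(x+i\,\cdot\,)\|_{L^2}\,\|g\|_{L^2},$$
so $G$ is bounded on each vertical line inside the strip. Since $f(z) \to 0$ as $|\textup{Im}(z)|\to\infty$, the same holds for $G$, and in particular $G$ is bounded on every closed sub-strip $\{\alpha+\varepsilon\leq\textup{Re}(z)\leq\beta\}$ and continuous up to its boundary.

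Fix $\varepsilon > 0$ small enough that $\alpha+\varepsilon \leq \gamma$, and apply the classical Hadamard three-lines theorem to $G$ on $[\alpha+\varepsilon,\beta]$:
$$|G(\gamma)| \leq \Bigl(\sup_{y}|G(\alpha+\varepsilon+iy)|\Bigr)^{\frac{\beta-\gamma}{\beta-\alpha-\varepsilon}}\Bigl(\sup_{y}|G(\beta+iy)|\Bigr)^{\frac{\gamma-\alpha-\varepsilon}{\beta-\alpha-\varepsilon}}.$$
Applying the Cauchy--Schwarz bound to each factor on the right, and then taking the supremum over compactly supported $g$ with $\|g\|_{L^2} = 1$, duality (together with the density of compactly supported functions in $L^2$) identifies the left-hand side with $\|f(\gamma+i\,\cdot\,)\|_{L^2}$, and squaring produces the claimed inequality, but with $\alpha$ replaced by $\alpha+\varepsilon$.

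Finally I would let $\varepsilon \searrow 0$, invoking the standing hypothesis that $\lim_{\varepsilon \searrow 0} \int_{-\infty}^\infty |f(\alpha+\varepsilon+it)|^2\,\textup{d}t$ exists, in order to recover the inequality with $\alpha$ itself. The main obstacle is the technical verification that $G$ meets the hypotheses of the scalar three-lines theorem on the closed sub-strip; the decay of $f$ at imaginary infinity is essential here to ensure that $G$ extends continuously to the boundary and remains uniformly bounded on the closed sub-strip, so that Phragm\'en--Lindel\"of-style obstructions do not arise.
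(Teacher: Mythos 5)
Your argument is correct, but it takes a genuinely different route from the paper. The paper treats this as a one-step reduction: it quotes Gabriel's convexity theorem \cite[Theorem 2]{Gabriel1} for the integrals $\int_{-\infty}^{\infty}|f(\sigma+it)|^2\,\textup{d}t$ on the shrunken strip with abscissae $\alpha+\varepsilon$ and $\beta-\varepsilon$, and then lets $\varepsilon\searrow 0$ using continuity of the integral in $\sigma$ together with the standing hypothesis that the limit at $\alpha$ exists; all the analytic content is outsourced to Gabriel. You instead rederive the $L^2$ convexity statement from scratch by dualizing: pairing $f(z+i\cdot)$ against a fixed compactly supported $g$ produces a scalar holomorphic function $G$ to which the classical Hadamard three-lines theorem applies, and the supremum over unit-norm $g$ recovers the $L^2$ norms. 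This is essentially the $L^2$ case of the Riesz--Thorin/Stein interpolation argument, and it is sound: the decay of $f$ at imaginary infinity gives boundedness of $G$ on closed substrips, Cauchy--Schwarz gives the boundary bounds via translation invariance of the $L^2$ norm, and density of compactly supported functions gives the duality identification. What your route buys is self-containedness, at the cost of the routine verifications you flag. Two minor points to tidy: when $\gamma=\alpha$ there is no admissible $\varepsilon$ with $\alpha+\varepsilon\leq\gamma$, but then the claimed inequality is trivial (exponents $1$ and $0$), and similarly for $\gamma=\beta$; and your bound for $\sup_y|G(\beta+iy)|$ tacitly assumes $\int_{-\infty}^{\infty}|f(\beta+it)|^2\,\textup{d}t<\infty$, which is harmless since otherwise the asserted inequality is vacuous. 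Neither affects correctness.
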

	
	\begin{proof} 
		Let $\varepsilon>0$ be small. Gabriel's convexity theorem \cite[Theorem 2]{Gabriel1} gives us that
		$$\int_{-\infty}^{\infty} |f(\gamma+\varepsilon+it)|^2 \, \textup{d}t \leq \left(\int_{-\infty}^\infty |f(\alpha+\varepsilon+it)|^{2}\right)^{\frac{(\beta-\varepsilon)-(\gamma+\varepsilon)}{(\beta-\varepsilon)-(\alpha+\varepsilon)}} \left(\int_{-\infty}^\infty |f(\beta-\varepsilon+it)|^2\right)^{\frac{(\gamma+\varepsilon)-(\alpha+\varepsilon)}{(\beta-\varepsilon)-(\alpha+\varepsilon)}}.$$ We now let $\varepsilon \to 0$, which by continuity of the integral $\int_{-\infty}^\infty |f(\sigma+it)|^2 \, \text{d}t$ in $\sigma$, implies the desired conclusion.
	\end{proof}
	
	We are now ready to prove Theorem \ref{mainThmLowerBound}. Define $$S_N(s)\coloneq\sum_{n\leq N} \frac{\mathbf{h}_{k_1,\dots,k_r}(n)}{n^s}, \qquad g_N(s)\coloneq L(s,\pi_1)^{k_1}\cdots L(s,\pi_r)^{k_r} - S_N(s).$$ and also the function $$w(t,T)\coloneq\int_T^{2T} e^{-(t-\tau)^2}\,\text{d}\tau,$$ which will concentrate our integrals to more or less only $[1,T]$.
	
	To the functions above we associate the three following integrals:
	\begin{align*}
		H(\sigma,T) &\coloneq \int_{-\infty}^{\infty} |S_N(\sigma+it)|^2w(t,T)\,\text{d}t, \\
		K(\sigma,T) &\coloneq \int_{-\infty}^{\infty} |g_N(\sigma+it)|^2w(t,T)\, \text{d}t,\\
		J(\sigma,T) &\coloneq \int_{-\infty}^\infty |L(1/2+it,\pi_1)|^{2k_1}\cdots|L(1/2+it,\pi_r)|^{2k_r}w(t,T)\,\text{d}t.
	\end{align*}
	Theorem \ref{mainThmLowerBound} will follow if we can prove $T(\log T)^{k_1^2+\dots+k_r^2} \ll J(1/2,T)$.
	
	\begin{lem}
		Assume $J(1/2,T)\gg T(\log T)^{k_1^2\dots+k_r^2}$. Then $I_{k_1,\dots,k_r}(\pi_1,\dots,\pi_r) \gg T(\log T)^{k_1^2\dots+k_r^2}$.
	\end{lem}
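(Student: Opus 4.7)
The weight $w(t,T)$ concentrates on $[T,2T]$: uniformly in $t$ we have $w(t,T)\le\sqrt\pi$, and for $t$ outside the slightly enlarged window $[T-(\log T)^2,\,2T+(\log T)^2]$, $w(t,T)=O_A(T^{-A})$ for every $A>0$ by the Gaussian decay of the kernel. My plan is to use this concentration to compare $J(1/2,T)$ to the unweighted integral of $|L\cdots|^{2k_*}$, and then apply the hypothesis at a rescaled argument to conclude.

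The first step is to establish, for any parameter $S\ge 1$, the comparison
\[
J(1/2,S)\le \sqrt\pi\int_1^{3S}|L(\tfrac12+it,\pi_1)|^{2k_1}\cdots|L(\tfrac12+it,\pi_r)|^{2k_r}\,dt+O(1).
\]
I would do this by splitting the $t$-integral at the boundary of the window $[S-(\log S)^2,\,2S+(\log S)^2]$. On this window I bound $w(t,S)\le\sqrt\pi$, producing the main contribution. For the complement I invoke the Generalized Lindelöf Hypothesis (a consequence of GRH for each $\pi_j$), namely $|L(\tfrac12+it,\pi_j)|^{2k_j}\ll (1+|t|)^\varepsilon$, and combine it with the super-polynomial decay of $w(t,S)$; the integral of a polynomial times a super-polynomially small function is negligible. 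A separate but easy treatment of very large $|t|$, where one uses the full Gaussian decay, shows the whole tail is $O_A(S^{-A})$, and thus is subsumed in the $O(1)$ term.

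Applying the comparison with $S=T/3$, the right-hand side becomes $\sqrt\pi\,I_{k_1,\dots,k_r}(\pi_1,\dots,\pi_r)+O(1)$. Since the hypothesis $J(1/2,S)\gg S(\log S)^{k_1^2+\dots+k_r^2}$ is in force for all sufficiently large $S$, taking $S=T/3$ yields
\[
I_{k_1,\dots,k_r}(\pi_1,\dots,\pi_r)\gg J(1/2,T/3)-O(1)\gg (T/3)(\log(T/3))^{k_1^2+\dots+k_r^2}\asymp T(\log T)^{k_1^2+\dots+k_r^2}.
\]
The only real obstacle is the tail estimate, but this is routine since the GLH polynomial growth is dwarfed by the super-polynomial decay of $w$ outside the window; the rescaling step at the end is what guarantees the correct interval $[1,T]$ in the final conclusion.
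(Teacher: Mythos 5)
Your proof is correct and follows essentially the same approach as the paper: you bound the Gaussian weight $w(t,T)$ by a constant on a neighbourhood of $[T,2T]$, and use the super-polynomial decay of $w$ together with GLH to control the tails, then rescale to land the conclusion on the right interval. The paper phrases the same comparison as $\int_0^{3T}|L\cdots|^{2k_*}\,\textup{d}t\gg J(1/2,T)$ with the rescaling left implicit, whereas you make it explicit via $S=T/3$; the substance is identical.
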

	
	\begin{proof}
		A direct computation shows that $$\int_0^{3T} |L(\tfrac12+it,\pi_1)|^{2k_1}\cdots|L(\tfrac12+it,\pi_r)|^{2k_r}\, \text{d}t \gg J(1/2,T),$$ because $w(t,T)\ll 1$ for all $t$, and for $t \in \mathbb{R}\backslash [0,3T]$ we estimate $w(t,T)$ explicitly and bound our $L$-functions using GLH.
	\end{proof}
	
	We now set out to prove that $J(1/2,T)\gg T(\log T)^{k_1^2+\dots+k_r^2}$. In the following section, $C$ is a positive constant that may change value from line to line. Integrating the inequality $$|S_N(s)|^2 \ll |L^{k_1}(s,\pi_1)\cdots L^{k_r}(s,\pi_r)|^2 + |g_N(s)|^2,$$ gives 
	\begin{equation}\label{HJKGeneralIneq1}
		H(\sigma,T)\ll J(\sigma,T) + K(\sigma,T).
	\end{equation}
	The same argument also gives
	\begin{equation}\label{KJHGeneralIneq2}
		K(\sigma,T)\ll J(\sigma,T) + H(\sigma,T).
	\end{equation}
	We divide the proof in two cases, depending essentially on how well $S_N(s)$ approximates $L^{k_1}(s,\pi_1)\cdots L^{k_r}(s,\pi_r)$. More specifically, we will consider the two cases $K(1/2,T)<T$ or $K(1/2,T)\geq T$. In the first case we have a good approximation, and the proof is rather simple.
	
	\textbf{Case I:} $K(1/2,T)<T$. In this case (\ref{HJKGeneralIneq1}) reads $$H(1/2,T)\ll J(1/2,T) + T,$$ so $H(1/2,T)\gg T(\log T)^{k_1^2+\dots+k_r^2}$ would imply the desired conclusion. 
	
	\begin{lem}
		Let $N\ll T$ and $\log N \gg \log T$. Assume $\frac12 + \frac{C_{k_1,\dots,k_r}}{\log N} \leq \sigma \leq \frac34$. Then $$H(\sigma,T) \gg T\left(\sigma-\frac12\right)^{-(k_1^2+\dots+k_r^2)}.$$
		Under the same assumptions, we also have $H(1/2,T)\gg T(\log T)^{k_1^2+\dots+k_r^2}$.
	\end{lem}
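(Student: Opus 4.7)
The plan is to expand the square in the definition of $H(\sigma,T)$ and compute the resulting Gaussian-weighted integrals explicitly. Writing
$$H(\sigma,T)=\sum_{m,n\leq N}\frac{\mathbf{h}_{k_1,\dots,k_r}(m)\overline{\mathbf{h}_{k_1,\dots,k_r}(n)}}{(mn)^\sigma}\int_{-\infty}^\infty\left(\frac{n}{m}\right)^{it}w(t,T)\,\text{d}t,$$
I would swap the order of integration (the $\tau$-integral in $w(t,T)$ against the $t$-integral) and complete the Gaussian in $t$, obtaining the clean identity
$$\int_{-\infty}^\infty (n/m)^{it}w(t,T)\,\text{d}t=\sqrt{\pi}\,e^{-(\log(n/m))^2/4}\int_T^{2T}e^{i\tau\log(n/m)}\,\text{d}\tau.$$
The diagonal $m=n$ contributes exactly $\sqrt{\pi}\,T\sum_{n\leq N}|\mathbf{h}_{k_1,\dots,k_r}(n)|^2/n^{2\sigma}$, which by Lemma \ref{h_kCoeffsAsymps}(i) is of the desired order $T(\sigma-\tfrac12)^{-(k_1^2+\dots+k_r^2)}$ on the range $\sigma\geq\tfrac12+C_{k_1,\dots,k_r}/\log N$ — this is the main term supplying the lower bound.

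The work then reduces to controlling the off-diagonal terms with $m\neq n$. Here the $\tau$-integral is bounded by $2/|\log(n/m)|$, and the Gaussian factor $e^{-(\log(n/m))^2/4}$ forces $m$ and $n$ to be of comparable size. Applying the AM--GM inequality $|\mathbf{h}(m)\overline{\mathbf{h}(n)}|\leq\tfrac12(|\mathbf{h}(m)|^2+|\mathbf{h}(n)|^2)$ together with $|\log(n/m)|\gg|n-m|/\max(m,n)$, the off-diagonal contribution collapses to something of the shape $\sum_{n\leq N}|\mathbf{h}_{k_1,\dots,k_r}(n)|^2\,n^{1-2\sigma}$, i.e.\ a Montgomery--Vaughan style error. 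Since $n\leq N$, this is at most $N\sum_{n\leq N}|\mathbf{h}_{k_1,\dots,k_r}(n)|^2/n^{2\sigma}\ll N(\sigma-\tfrac12)^{-(k_1^2+\dots+k_r^2)}$. Because the hypothesis allows us to take $N$ as a small fixed power of $T$ (any $N=T^\theta$ with $0<\theta<1$ satisfies $N\ll T$ and $\log N\gg\log T$), this error is dominated by the main term, and the first claim follows.

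For the lower bound on $H(\tfrac12,T)$ the identical computation applies directly at $\sigma=\tfrac12$: the diagonal becomes $\sqrt{\pi}\,T\sum_{n\leq N}|\mathbf{h}_{k_1,\dots,k_r}(n)|^2/n$, which by Lemma \ref{h_kCoeffsAsymps}(ii) is $\asymp T(\log N)^{k_1^2+\dots+k_r^2}\asymp T(\log T)^{k_1^2+\dots+k_r^2}$ under the assumption $\log N\gg\log T$; the off-diagonal is estimated by the same Montgomery--Vaughan type bound, which remains negligible. The main obstacle is ensuring that the off-diagonal contribution is \emph{strictly} dominated by the diagonal uniformly as $\sigma\to\tfrac12$, but this is guaranteed by the exponential Gaussian decay combined with the freedom in choosing $N\ll T^\theta$.
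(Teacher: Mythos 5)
Your argument is correct in substance and reaches the lemma, but it takes a more elaborate route than the paper does. The paper's proof is a two-line affair: it observes that $w(t,T)\gg 1$ for $4T/3\leq t\leq 5T/3$ (and $w\geq 0$ everywhere), so $H(\sigma,T)\gg\int_{4T/3}^{5T/3}|S_N(\sigma+it)|^2\,\textup{d}t$, applies the Montgomery--Vaughan mean value theorem as a black box to obtain $\gg T\sum_{n\leq N}|\mathbf{h}_{k_1,\dots,k_r}(n)|^2/n^{2\sigma}$, and then invokes Lemma~\ref{h_kCoeffsAsymps}. You instead retain the Gaussian weight, evaluate the inner $t$-integral exactly (your identity is correct), and re-derive the diagonal-dominance estimate by hand. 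This is a legitimate route, but it is essentially re-proving a Gaussian-smoothed Montgomery--Vaughan internally, and the hand estimate you sketch is actually lossy: in the dyadic block $|n-m|<n/2$ the sum $\sum_{1\leq d<n/2}\min(T,n/d)\asymp n\log n$, not $n$, so your off-diagonal bound comes out as $\ll N\log N\,(\sigma-\tfrac12)^{-(k_1^2+\cdots+k_r^2)}$ rather than $\ll N\,(\sigma-\tfrac12)^{-(k_1^2+\cdots+k_r^2)}$. The genuine Montgomery--Vaughan theorem does not lose this $\log$. The extra factor is harmless here since the lemma is applied later with $N=T^{1/2}$, but it means the hypothesis in your version would have to read $N\log N=o(T)$ rather than simply $N\ll T$. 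The paper's formulation avoids this bookkeeping altogether by restricting the integration range rather than unwrapping the weight.
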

	
	\begin{proof}
		We start by observing that $w(t,T)\ll 1$ for all $t$, and $w(t,T) \gg 1$ for $4T/3 \leq t \leq 5T/3$. This observation and Montgomery--Vaughan mean value theorem gives $$H(\sigma,T)\gg \int_{4T/3}^{5T/3} |S_N(\sigma+it)|^2\,\textup{d}t \gg T\sum_{n\leq N} \frac{|\mathbf{h}_{k_1,\dots,k_r}(n)|^2}{n^{2\sigma}}.$$ Lemma \ref{h_kCoeffsAsymps} now gives the desired conclusion.
	\end{proof}
	
	\textbf{Case II:} $K(1/2,T)\geq T$. We start by applying Proposition \ref{generalizedGabriel1} to the function $g_N(z)e^{\frac12 (z-i\tau)^2}$ with $\alpha=1/2$, $\beta=7/8$ and $\gamma=\sigma$. This gives us 
	\begin{align}\label{lowerBoundMomentCaseIStart}
		&\int_{-\infty}^\infty |g_N(\sigma+it)e^{\frac12(\sigma+i(t-\tau))^2}|^2 \, \text{d}t \\
		\leq &\left(\int_{-\infty}^{\infty}\left|g_N(1/2+it)e^{\frac12(1/2+i(t-\tau))^2}\right|^2 \,\text{d}t\right)^{(7-8\sigma)/3}\left(\int_{-\infty}^{\infty}\left|g_N(7/8+it)e^{\frac12(7/8+i(t-\tau))^2}\right|^2 \,\text{d}t\right)^{\frac{8\sigma-4}{3}} \nonumber.
	\end{align}
	
	Since $\mathbf{h}_{k_1,\dots,k_r}(n) \ll n^{\varepsilon}$ for any $\varepsilon>0$, we have upon choosing $\varepsilon=1/2$, the following estimate: $$S_N(s)\ll N \ll T,$$ for any $\frac12 \leq \sigma \leq \frac54$. By GLH, $$|L^{k_1}(s,\pi_1)\cdots L^{k_r}(s,\pi_r)|\ll |t|^{(k_1m_1+\dots+k_rm_r)\varepsilon},$$ holds for any $\varepsilon>0$. Choosing $\varepsilon=(k_1m_1+\dots+k_rm_r)^{-1}$, we conclude that $$g_N(s)\ll T+t$$ for $|s-1|\geq \frac{1}{10}$ (to stay away from a possible pole) and $\frac12 \leq \sigma \leq \frac54$. Using these bounds we have $$\int_{-\infty}^{\tau/2} \left|g_N(7/8+it)e^{\frac12(7/8+i(t-\tau))^2}\right|^2 \,\text{d}t \ll T^2e^{-C\tau^2}$$ for some positive $C>0$ and similarly $$\int_{3\tau/2}^{\infty} \left|g_N(7/8+it)e^{\frac12(7/8+i(t-\tau))^2}\right|^2 \,\text{d}t \ll T^2e^{-C\tau^2}.$$ Thus 
	\begin{align}\label{generalizedGabriel1Conclusion}
		&\int_{-\infty}^{\infty} \left|g_N(7/8+it)e^{\frac12(7/8+i(t-\tau))^2}\right|^2 \,\text{d}t \\
		= &\int_{\tau/2}^{3\tau/2} \left|g_N(7/8+it)e^{\frac12(7/8+i(t-\tau))^2}\right|^2 \,\text{d}t + O\left(T^2e^{-C\tau^2} \right). \nonumber
	\end{align}
	
	Let us write $f(s)=g_N(s)e^{\frac12(s-i\tau)^2}$ for convenience from now on. Let furthermore $$F(z)=f\left(z+\frac78 +i\tau\right)$$ and $$z_0=\frac38 +\frac12 i\tau.$$ With this notation we have the following lemma originally due to Gabriel \cite{Gabriel1} in a more general setting.
	
	\begin{lem}\label{generalizedGabriel2}
		Let $F(z)$ be the function displayed above. Let $R$ be the rectangle whose vertices are $z_0,\overline{z_0},-z_0$ and $-\overline{z_0}$. Let $L$ be the line connecting $-i\textup{Im}(z_0)$ and $i\textup{Im}(z_0)$; let $P_1$ consist of the three line segments connecting $-i\textup{Im}(z_0), \overline{z_0},z_0$ and $i\textup{Im}(z_0)$ respectively; and let finally $P_2$ be the mirror image of $P_1$ about $L$. Then $$\int_L |F(z)|^2 \, |\textup{d}z| \leq \left(\int_{P_1} |F(z)|^2\, |\textup{d}z|\right)^{1/2} \left(\int_{P_2} |F(z)|^2\, |\textup{d}z|\right)^{1/2}.$$
	\end{lem}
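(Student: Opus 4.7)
The claim is an immediate application of the original convexity theorem of Gabriel \cite{Gabriel1}: if $F$ is holomorphic in the interior of a convex domain bounded by a rectifiable Jordan curve and continuous on the closure, and if $L$ is any straight chord partitioning the boundary into two arcs $Q_1, Q_2$, then $\int_L |F|^2 \, |\textup{d}z| \leq \bigl(\int_{Q_1}|F|^2\,|\textup{d}z|\bigr)^{1/2}\bigl(\int_{Q_2}|F|^2\,|\textup{d}z|\bigr)^{1/2}$. Since our rectangle $R$ is convex, and the segment $L$ on the imaginary axis is precisely the vertical chord dividing $\partial R$ into the arcs $P_1$ and $P_2$ as described, only the analytic hypotheses on $F$ need to be checked.

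The plan is therefore to verify that $F(z) = g_N(z + \tfrac78 + i\tau)\,e^{\frac12(z+\frac78)^2}$ is holomorphic on a neighbourhood of the open rectangle $R$ and continuous on $\overline{R}$. The Gaussian factor is entire, and the Dirichlet polynomial $S_N$ is entire, so the question reduces to the product $L^{k_1}(s,\pi_1)\cdots L^{k_r}(s,\pi_r)$ under the substitution $s = z + \tfrac78 + i\tau$. The image of $R$ lies in the rectangle $\tfrac12 \leq \textup{Re}(s) \leq \tfrac54$ with $\tfrac{\tau}{2}\leq \textup{Im}(s)\leq \tfrac{3\tau}{2}$, which for $\tau \asymp T$ large avoids the potential pole at $s=1$. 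Under GRH each $L(\cdot, \pi_j)$ is zero-free on the open half-plane $\textup{Re}(s) > 1/2$, so the principal branch of $\log L(\cdot, \pi_j)$, normalized by $\log L \to 0$ as $\textup{Re}(s) \to \infty$, and hence the real power $L^{k_j}(\cdot, \pi_j)$, is single-valued and holomorphic there.

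The main obstacle is continuity of $L^{k_j}$ on the left boundary $\textup{Re}(s) = \tfrac12$, where under GRH the $L$-functions have infinitely many zeros and arbitrary real powers are not continuously defined. I would handle this exactly as in the proof of Proposition \ref{generalizedGabriel1}: apply Gabriel's theorem to shrunken rectangles $R_\varepsilon$ having left side at $\textup{Re}(z) = -\tfrac38 + \varepsilon$, where all hypotheses are strictly satisfied, and then let $\varepsilon \searrow 0$ using continuity in $\sigma$ of the $L^2$-integrals of $F$ on vertical lines. That continuity is supplied by the Gaussian cutoff $e^{\frac12(s-i\tau)^2}$ built into $f$, which makes the integrals converge absolutely and rapidly, together with the polynomial-in-$|t|$ bounds on $L^{k_j}(\sigma+it,\pi_j)$ coming from GLH that were already exploited in \eqref{generalizedGabriel1Conclusion}. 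Once this limiting step is justified, the stated inequality is just Gabriel's theorem read off for $(F,R,L,P_1,P_2)$.
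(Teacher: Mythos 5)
Your proposal is correct, but it takes a genuinely different route from the paper. You reduce the lemma to a black-box application of Gabriel's general chord theorem for a convex domain, then verify the analytic hypotheses on $F$ and handle the boundary at $\textup{Re}(s)=\tfrac12$ by shrinking the left side of $R_\varepsilon$ and passing to the limit (which, as you note, is justified by the Gaussian cutoff, GLH growth bounds, and the fact that $|L^{k_j}|^2=|L|^{2k_j}$ remains continuous at critical-line zeros). The paper instead gives a short self-contained proof that exploits the specific reflection symmetry of $R$ about $L$: after shrinking $R$ \emph{symmetrically} to $R_\varepsilon$, it introduces the analytic function $F(z)\overline{F(-\overline z)}$, observes that this equals $|F(z)|^2$ on the imaginary axis $L$, applies Cauchy's theorem on the right half of $R_\varepsilon$ to move the integral from $L$ to $P_{1,\varepsilon}$, and then uses Cauchy--Schwarz together with the change of variables $w=-\overline z$ (which maps $P_{1,\varepsilon}$ onto $P_{2,\varepsilon}$) to reach the asserted form before letting $\varepsilon\to 0$. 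Both arguments are valid; yours is shorter to state but leans on the full strength of Gabriel's general result for an arbitrary chord of a convex domain (so your asymmetric shrinking of $R_\varepsilon$ is permissible), whereas the paper's argument is elementary and self-contained precisely because it only needs the special case where the chord lies on an axis of symmetry, at the price of having to shrink $R$ symmetrically to preserve that symmetry.
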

	
	\begin{proof}
		Let $\varepsilon>0$ be small. Let $R_{\varepsilon}$ be $R$ but with $z_0$ interchanged with $z_0-\varepsilon$. We call the new $P_1$ and $P_2$ for $P_{1,\varepsilon}$ and $P_{2,\varepsilon}$ respectively. Since we are assuming GRH, $F$ is single-valued and an analytic function in $R_{\varepsilon}$. This also has to be the case for $\overline{F(-\overline{z})}$. Observe that on $L$, $\overline{F(z)}=\overline{F(-\overline{z})}$. Cauchy's theorem then implies $$\int_{L} |F(z)|^2 \, |\text{d}z| = \left|\int_{P_{1,\varepsilon}} F(z)\overline{F(-\overline{z})}\, \text{d}z\right| \leq \int_{P_{1,\varepsilon}} |F(z)||F(-\overline{z})|\, |\text{d}z|.$$ By Cauchy--Schwarz and the definition of $P_2$ we have 
		$$\int_{P_{1,\varepsilon}} |F(z)||F(-\overline{z})|\, |\text{d}z| \leq \left(\int_{P_{1,\varepsilon}} |F(z)|^2\, |\text{d}z| \right)^{1/2}\left(\int_{P_{2,\varepsilon}} |F(z)|^2\, |\text{d}z| \right)^{1/2}.$$ Now we let $\varepsilon \to 0$, which by continuity of the integral as a function of $\text{Re}(z)$, implies the theorem.
	\end{proof}
	
	With the notation from the proposition we have $$\int_L |F(z)|^2 \, |\text{d}z|=\int_{\tau/2}^{3\tau/2} |f(\tfrac78+it)|^2\,\text{d}t.$$ Furthermore $$\int_{P_1} |F(z)|^2 \, |\text{d}z| = \int_{\tau/2}^{3\tau/2} |f(\tfrac54 +it)|^2\, \text{d}t + \int_{\tfrac78}^{\tfrac54} \left(|f(\eta+\tfrac12 i \tau)|^2 + |f(\eta+\tfrac32 i\tau)|^2\right)\, \text{d}\eta.$$ Using the same bound as earlier, $g_N(s)\ll T+t$, we bound the last integral by $$\ll T^2e^{-C\tau^2}.$$ Thus $$\int_{P_1} |F(z)|^2\, |\text{d}z| = \int_{\tau/2}^{3\tau/2} |f(\tfrac54+it)|^2\,\text{d}t + O(T^2e^{-C\tau^2}).$$ By the exact same reasoning, $$\int_{P_2} |F(z)|^2 \, |\text{d}z| = \int_{\tau/2}^{3\tau/2} |f(\tfrac12 +it)|^2 \, \text{d}t + O(T^2e^{-c\tau^2}).$$ Thus our application of Proposition \ref{generalizedGabriel2} reads 
	\begin{align}\label{generalizedGabriel2Conclusion}
		&\int_{\tau/2}^{3\tau/2} |f(\tfrac78+it)|^2\, \text{d}t \\ 
		\ll &\left(\int_{\tau/2}^{3\tau/2} |f(\tfrac12+it)|^2\, \text{d}t\right)^{1/2}\left(\int_{\tau/2}^{3\tau/2} |f(\tfrac54+it)|^2\, \text{d}t\right)^{1/2} + O(T^2e^{-C\tau^2}). \nonumber
	\end{align}
	Putting (\ref{generalizedGabriel1Conclusion}) together with (\ref{generalizedGabriel2Conclusion}), we get
	\begin{align}\label{momentLowerBound7/8CompleteIntegral}
		&\int_{-\infty}^\infty |f(\tfrac78+it)|^2 \, \text{d}t \\
		\ll &\left(\int_{\tau/2}^{3\tau/2} |f(\tfrac12+it)|^2\, \text{d}t\right)^{1/2}\left(\int_{\tau/2}^{3\tau/2} |f(\tfrac54+it)|^2\, \text{d}t\right)^{1/2} + O(T^2e^{-C\tau^2})\nonumber.
	\end{align}
	Going back to (\ref{lowerBoundMomentCaseIStart}), and putting (\ref{momentLowerBound7/8CompleteIntegral}) into this equation we have (as long as $1/2\leq \sigma\leq 7/8$) that
	\begin{align}\label{momentLowerBoundRightBeforeKIneq}
		&\int_{-\infty}^\infty |f(\sigma+it)|^2 \, \text{d}t \\
		\ll &\left(\int_{-\infty}^\infty |f(\tfrac12 +it)|^2 \, \text{d}t\right)^{\frac{7-8\sigma}{3}}\left(\left(\int_{\tau/2}^{3\tau/2} |f(\tfrac12+it)|^2\, \text{d}t\right)^{1/2}\left(\int_{\tau/2}^{3\tau/2} |f(\tfrac54+it)|^2\, \text{d}t\right)^{1/2}  \right)^{\frac{8\sigma-4}{3}}\nonumber \\
		&+ \left(T^2e^{-C\tau^2}\right)^{\frac{8\sigma-4}{3}}\left(\int_{-\infty}^\infty |f(\tfrac12 +it)|^2 \, \text{d}t\right)^{\frac{7-8\sigma}{3}}  \nonumber\\
		\ll &\left(\int_{-\infty}^\infty |f(\tfrac12 +it)|^2 \, \text{d}t\right)^{\frac{5-4\sigma}{3}} \left(\int_{\tau/2}^{3\tau/2} |f(\tfrac54+it)|^2\, \text{d}t\right)^{\frac{4\sigma-2}{3}} \nonumber\\
		&+ \left(T^2e^{-C\tau^2}\right)^{\frac{8\sigma-4}{3}}\left(\int_{-\infty}^\infty |f(\tfrac12 +it)|^2 \, \text{d}t\right)^{\frac{7-8\sigma}{3}}.\nonumber
	\end{align}
	
	By Hölder's inequality in the form $\int |f|^{\alpha}|g|^{\beta} \leq (\int |f|)^{\alpha} (\int |g|)^{\beta}$ for $\alpha+\beta=1$, we have, since $\frac{5-4\sigma}{3}+\frac{4\sigma-2}{3}=1$, that 
	\begin{align*}
		&\int_T^{2T} \left(\int_{-\infty}^\infty |f(\tfrac12 +it)|^2 \, \text{d}t\right)^{\frac{5-4\sigma}{3}} \left(\int_{\tau/2}^{3\tau/2} |f(\tfrac54+it)|^2\, \text{d}t\right)^{\frac{4\sigma-2}{3}} \, \text{d}\tau \\
		\leq &\left(\int_T^{2T} \int_{-\infty}^\infty |f(\tfrac12 +it)|^2 \, \text{d}t \text{d}\tau \right)^{\frac{5-4\sigma}{3}}\left(\int_T^{2T}\int_{\tau/2}^{3\tau/2} |f(\tfrac54+it)|^2\, \text{d}t\text{d}\tau\right)^{\frac{4\sigma-2}{3}} \\
		= &\left(\int_T^{2T} \int_{-\infty}^\infty |g_N(\tfrac12 +it)|^2e^{-(t-\tau)^2} \, \text{d}t \text{d}\tau \right)^{\frac{5-4\sigma}{3}}\left(\int_T^{2T}\int_{\tau/2}^{3\tau/2} |g_N(\tfrac54+it)|^2e^{-(t-\tau)^2}\, \text{d}t\text{d}\tau\right)^{\frac{4\sigma-2}{3}} \\
		= &K(1/2,T)^{\frac{5-4\sigma}{3}}\left(\int_T^{2T}\int_{\tau/2}^{3\tau/2} |g_N(\tfrac54+it)|^2e^{-(t-\tau)^2}\, \text{d}t\text{d}\tau\right)^{\frac{4\sigma-2}{3}},
	\end{align*}
	upon changing the order of integration in the end. Similary, as $\frac{7-8\sigma}{3}+\frac{8\sigma-4}{3}=1$, Hölder again gives
	\begin{align*}
		&\int_{T}^{2T}\left(T^2e^{-C\tau^2}\right)^{\frac{8\sigma-4}{3}}\left(\int_{-\infty}^\infty |f(\tfrac12 +it)|^2 \, \text{d}t\right)^{\frac{7-8\sigma}{3}} \, \text{d}\tau \\
		\leq&\left(T^2e^{-CT^2}\right)^{\frac{8\sigma-4}{3}}K(1/2,T)^{\frac{7-8\sigma}{3}}.
	\end{align*}
	
	Inserting these two applications of Hölder's inequality in (\ref{momentLowerBoundRightBeforeKIneq}) gives 
	\begin{align}\label{momentLowerAlmostKIneq}
		K(\sigma,T) &\ll K(1/2,T)^{\frac{5-4\sigma}{3}}\left(\int_T^{2T}\int_{\tau/2}^{3\tau/2} |g_N(\tfrac54+it)|^2e^{-(t-\tau)^2}\, \text{d}t\text{d}\tau\right)^{\frac{4\sigma-2}{3}} \\ 
		&+ \left(T^2e^{-CT^2}\right)^{\frac{8\sigma-4}{3}}K(1/2,T)^{\frac{7-8\sigma}{3}}. \nonumber
	\end{align}
	To estimate the last factor in the first term in the displayed equation above, we use Montgomery--Vaughan and the Dirichlet series definition of $L^{k_1}(s,\pi_1)\cdots L^{k_r}(s,\pi_r)$ which is valid $\text{Re}(s)>1$: 
	\begin{align*}
		\int_T^{2T}\int_{\tau/2}^{3\tau/2} |g_N(\tfrac54+it)|^2e^{-(t-\tau)^2}\, \text{d}t\text{d}\tau &\ll \int_{T/2}^{3T/2} |g_N(\tfrac54+it)|^2\,\text{d}t \\
		&\ll T\sum_{n>N} \frac{|\mathbf{h}_{k_1,\dots,k_r}(n)|^2}{n^{5/2}} \ll TN^{\varepsilon-3/2},
	\end{align*}
	where we again have used in the end that $\mathbf{h}_{k_1,\dots,k_r}(n) \ll n^{\varepsilon}$. In light of this, (\ref{momentLowerAlmostKIneq}) reads 
	\begin{align*}
		K(\sigma,T) &\ll K(1/2,T)^{\frac{5-4\sigma}{3}}T^{\frac{4\sigma-2}{3}}N^{(\varepsilon-\frac32)\frac{4\sigma-2}{3}} + \left(T^2e^{-CT^2}\right)^{\frac{8\sigma-4}{3}}K(1/2,T)^{\frac{7-8\sigma}{3}} \\
		&\ll K(1/2,T)^{\frac{5-4\sigma}{3}}T^{\frac{4\sigma-2}{3}}N^{(\varepsilon-\frac32)\frac{4\sigma-2}{3}} + K(1/2,T)^{\frac{7-8\sigma}{3}}
	\end{align*}
	Recall that we assumed $K(1/2,T)\geq T$ for this case, so with the choices $\varepsilon=1/2$, $N=T^{1/2}$, the inequality reads as: 
	\begin{equation}\label{KFinalIneq}
		K(\sigma,T) \ll K(1/2,T)T^{\frac{1-2\sigma}{3}},
	\end{equation}
	for $\tfrac12 \leq \sigma \leq \tfrac34$.
	
	With all the information we need about $K(\sigma,T)$ proved, we now tidy things up and finish the proof. By (\ref{HJKGeneralIneq1}), (\ref{KJHGeneralIneq2}) and (\ref{KFinalIneq}) we have $$H(\sigma,T) \ll J(\sigma,T)+(H(1/2,T)+J(1/2,T))T^{\frac{1-2\sigma}{3}}.$$
	Thus at least one of the following inequalities have to hold:
	$$H(\sigma,T) \ll H(1/2,T)T^{\frac{1-2\sigma}{3}} \qquad \text{or} \qquad H(\sigma,T) \ll J(\sigma,T)+J(1/2,T)T^{\frac{1-2\sigma}{3}}.$$
	Assume the first inequality holds. We take $\sigma=\tfrac12 + \frac{\eta}{\log T}$ for some $\eta>C_{k_1,\dots,k_r}$. By Lemma \ref{h_kCoeffsAsymps}, the displayed inequality above reads $$e^{\eta}\leq D_{k_1,\dots,k_r} \eta^{k_1^2+\dots+k_r^2}$$ for some fixed constant $D_{k_1,\dots,k_r}>0$. Taking $\eta$ sufficiently big, we contradict this inequality. Thus we conclude that for this $\eta$, 
	\begin{equation}\label{momentLowerBoundAlmostFinished}
		H(\widetilde{\sigma},T)\leq J(\widetilde{\sigma},T)+J(1/2,T)T^{\frac{1-2\sigma}{3}}
	\end{equation}
	must hold when $\widetilde{\sigma}=\frac{1}{2}+\frac{\eta}{\log T}$. We now claim that $$J(\widetilde{\sigma},T)\ll T^{\widetilde{\sigma}-\frac12}J(1/2,T)^{\frac32 -\widetilde{\sigma}} $$ for some positive constant $C>0$. Assuming the truth of this, we get from (\ref{momentLowerBoundAlmostFinished}) and Lemma \ref{h_kCoeffsAsymps} that $$T(\log T)^{k_1^2\dots+k_r^2} \ll J(1/2,T),$$ which is just what we wanted to prove. Thus the only thing left to prove to finish the proof is our claim above. It will follow from
	
	\begin{lem}
		Let $\tfrac12 \leq \sigma \leq \tfrac34$. Then  $J(\sigma,T)\ll T^{\sigma-1/2}J(1/2,T)^{3/2-\sigma}$.
	\end{lem}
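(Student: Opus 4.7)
The plan is to apply Proposition \ref{generalizedGabriel1} (Gabriel's convexity theorem) to a Gaussian-damped version of $F(s) := L(s,\pi_1)^{k_1}\cdots L(s,\pi_r)^{k_r}$ in the strip $1/2 \leq \textup{Re}(s) \leq 3/2$, then integrate over $\tau \in [T,2T]$ with H\"older, and finally observe that $J(3/2,T) \ll T$ via absolute convergence of the Dirichlet series at $\textup{Re}(s) = 3/2$. The target exponents $3/2-\sigma$ and $\sigma-1/2$ match exactly the convex combination $\sigma = (3/2-\sigma)\cdot \tfrac12 + (\sigma-\tfrac12)\cdot \tfrac32$, which is what Gabriel delivers.

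First, for each fixed $\tau \in [T, 2T]$, I would apply Proposition \ref{generalizedGabriel1} to $f_\tau(s) := F(s)\, e^{(s-i\tau)^2/2}$ with $\alpha = 1/2$, $\beta = 3/2$, $\gamma = \sigma$. The function is holomorphic in the open strip: cuspidality makes each $L(s,\pi_j)$ entire and GRH confines its zeros to $\textup{Re}(s) = 1/2$, so a branch of $F$ can be fixed throughout the simply-connected region $1/2 < \textup{Re}(s) < 3/2$. The polynomial growth of $|F|$ from GLH is killed by the Gaussian, so $f_\tau \to 0$ as $|\textup{Im}(s)|\to\infty$, and the $\varepsilon \searrow 0$ limit on the critical line is harmless since $|F(1/2+it)|^2$ is locally integrable. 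Using $|f_\tau(\sigma+it)|^2 = e^{\sigma^2}|F(\sigma+it)|^2 e^{-(t-\tau)^2}$ and absorbing the bounded factor $e^{c\sigma^2}$ for $\sigma\in[1/2,3/4]$ into the implicit constant, Proposition \ref{generalizedGabriel1} gives
\begin{equation*}
\int_{-\infty}^\infty |F(\sigma+it)|^2 e^{-(t-\tau)^2}\dt \ll \left(\int_{-\infty}^\infty |F(\tfrac12+it)|^2 e^{-(t-\tau)^2}\dt\right)^{3/2-\sigma} \left(\int_{-\infty}^\infty |F(\tfrac32+it)|^2 e^{-(t-\tau)^2}\dt\right)^{\sigma-1/2}.
\end{equation*}

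Next, I would integrate both sides over $\tau \in [T,2T]$ and invoke H\"older's inequality with exponents $1/(3/2-\sigma)$ and $1/(\sigma-1/2)$, whose reciprocals sum to $1$. Swapping the orders of integration via Fubini reconstructs precisely $J(1/2,T)$ and $J(3/2,T)$ on the right, yielding $J(\sigma,T) \ll J(1/2,T)^{3/2-\sigma}\, J(3/2,T)^{\sigma-1/2}$. It then remains to show $J(3/2,T) \ll T$: on the line $\textup{Re}(s)=3/2$ the Dirichlet series for $F$ converges absolutely, and Lemma \ref{h_kHigherCoeffsCanBeIgnored} gives $\mathbf{h}_{k_1,\dots,k_r}(n) \ll n^{\varepsilon}$, so $|F(3/2+it)|$ is bounded uniformly in $t$; since $\int w(t,T)\dt \asymp T$, this yields the required estimate and the lemma follows. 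The only substantive technical point is verifying Gabriel's hypotheses for $f_\tau$, but this is immediate from GRH, cuspidality, and the Gaussian damping; no delicate estimation is needed beyond what has already been done in the preceding Case~II argument.
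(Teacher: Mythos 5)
Your overall route is the same as the paper's: apply Proposition \ref{generalizedGabriel1} with $\alpha=1/2$, $\beta=3/2$, $\gamma=\sigma$ to a Gaussian-damped version of $L^{k_1}\cdots L^{k_r}$, then use H\"older (the paper says ``Jensen,'' but in the form $\int u^a v^b \leq (\int u)^a(\int v)^b$ for $a+b=1$ these are the same thing) over $\tau \in [T,2T]$, and finally bound $J(3/2,T) \ll T$ by absolute convergence. That part is correct and matches the paper.

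However, there is a genuine gap: your justification that $f_\tau$ is holomorphic in the strip rests on the claim that ``cuspidality makes each $L(s,\pi_j)$ entire.'' This is false precisely in the one exceptional case the framework allows, namely when one of the $L(s,\pi_j)$ is the Riemann zeta function $\zeta(s)$. (The paper makes this explicit: $\Phi(s,\pi)$ extends to an entire function \emph{except} when $L(s,\pi)=\zeta(s)$, which has a simple pole at $s=1$.) The pole at $s=1$ sits squarely in the strip $1/2<\textup{Re}(s)<3/2$, so if, say, $L(s,\pi_1)=\zeta(s)$, then $L^{k_1}(s,\pi_1)\cdots L^{k_r}(s,\pi_r)$ has a pole or branch point at $s=1$ and Gabriel's convexity theorem cannot be applied to $f_\tau$ as defined. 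The paper handles this by splitting into two cases: if no $L(s,\pi_j)$ is $\zeta(s)$, your argument goes through; if one of them is $\zeta(s)$, one must instead work with $(s-1)^{k_1}L^{k_1}(s,\pi_1)\cdots L^{k_r}(s,\pi_r)e^{(s-i\tau)^2/2}$, which is holomorphic in the strip, and then account for the extra polynomial factor $|(s-1)^{k_1}|^2 \asymp T^{2k_1}$ in the $[T,2T]$-range --- this cancels across the two sides of the H\"older step because the exponents sum to $1$, but it requires a short additional computation in the style of Heath-Brown. Without addressing the $\zeta(s)$ case your proof of the lemma as stated is incomplete.
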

	
	\begin{proof}
		There are two cases to consider. If none of the $L$-functions $L(s,\pi_1),\dots,L(s,\pi_r)$ are the Riemann zeta function, then we have no pole at $s=1$ for the function $f(s)=L^{k_1}(s,\pi_1)\cdots L^{k_r}(s,\pi_r)e^{(s-i\tau)^2/2}$. In the other case, where $L(s,\pi_1)=\zeta(s)$ say, we consider the function $f(s)=(s-1)^{k_1}L^{k_1}(s,\pi_1)\cdots L^{k_r}(s,\pi_r)e^{(s-i\tau)^2}$. In any case, we apply Proposition \ref{generalizedGabriel1} to $f$ with $\alpha=1/2$, $\beta=3/2$ and $\gamma=\sigma$. The procedure for both the cases are very similar, the second being a tiny bit more computational. For the first case, the result is immediate after applying Proposition \ref{generalizedGabriel1} and Jensen's inequality. For the second case, we follow the same steps as Heath-Brown \cite[p. 71]{Heath-Brown1} and also apply Jensen's inequality in the end.
	\end{proof}

	\section{Upper bound for moments}
	
	{In this section we work with the moments $$\int_{T}^{2T} \prod_{j=1}^r|L(\tfrac12+it,\pi_j)|^{2k_j} \dt.$$ Theorem \ref{mainThmUpperBound} then follows from a dyadic decomposition.} Our point of departure will be the following lemma due to Chandee \cite[Theorem 2.1]{Chandee1}.
	\begin{lem}\label{chandeeLem}
		Assume that $L(s,\pi)$ is the Riemann zeta function or that the completed $L$-function $\Phi(s,\pi)$ of $L(s,\pi)$ has no pole or zero at $s=0,1$. Assume that RH and the Generalized Ramanujan conjecture holds for $L(s,\pi)$. Assume furthermore that $L$ has degree $m$. Then for any $e^2 \leq x \leq T^2$ and any $T \leq t \leq 2T$, for sufficiently large $T$, we have $$\log \left|L \left(\frac12 +it,\pi\right) \right| \leq \textup{Re} \left(\sum_{\substack{n\leq x,\\n=p,p^2}}\frac{\Lambda_{\pi}(n)}{n^{1/2+1/\log x+it}\log n}\frac{\log (x/n)}{\log x} \right)+m\frac{\log T}{\log x}+O(1).$$
	\end{lem}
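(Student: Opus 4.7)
This lemma is Chandee's generalization to automorphic $L$-functions of Soundararajan's celebrated upper bound for $\log|\zeta(\tfrac12+it)|$, and the plan is to follow that outline. Setting $\sigma_0 = \tfrac12 + 1/\log x$, the argument has two main steps: first, reduce the bound on $\log|L(\tfrac12+it,\pi)|$ to a bound on $\log|L(\sigma_0+it,\pi)|$ at the cost of a term of size $O(m\log T/\log x)$; second, bound $\log|L(\sigma_0+it,\pi)|$ by the desired smoothed prime sum via a contour shift.

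The first step starts from the identity
\[
\log|L(\tfrac12+it,\pi)| = \log|L(\sigma_0+it,\pi)| - \int_{1/2}^{\sigma_0}\textup{Re}\frac{L'}{L}(u+it,\pi)\,\textup{d}u.
\]
Hadamard's factorization of $\Phi(s,\pi)$ (entire of order one, after multiplying by $s(s-1)$ in the Riemann zeta case) combined with Stirling for the archimedean factors gives
\[
\textup{Re}\frac{L'}{L}(u+it,\pi) = \sum_{\rho}\textup{Re}\frac{1}{u+it-\rho} + O(m\log T)
\]
for $T\leq t \leq 2T$. Under GRH, $\rho = \tfrac12 + i\gamma$, and $\textup{Re}(1/(u+it-\rho)) = (u-\tfrac12)/((u-\tfrac12)^2+(t-\gamma)^2) \geq 0$ for $u > \tfrac12$, so the integral is bounded below by $-O(m(\sigma_0 - \tfrac12)\log T) = -O(m\log T/\log x)$, yielding the desired reduction (and producing the $m\log T/\log x$ term in the lemma).

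For the second step, the Dirichlet series identity $\log L(\sigma_0+it+w,\pi) = \sum_n \Lambda_\pi(n)n^{-(\sigma_0+it+w)}/\log n$, valid for $\textup{Re}(w)$ large, combined with the Mellin formula $\frac{1}{2\pi i}\int_{(c)}(x/n)^w w^{-2}\,\textup{d}w = \log^+(x/n)$, gives
\[
\sum_{n\leq x}\frac{\Lambda_\pi(n)}{n^{\sigma_0+it}\log n}\cdot\frac{\log(x/n)}{\log x} = \frac{1}{2\pi i \log x}\int_{(c)}\log L(\sigma_0+it+w,\pi)\frac{x^w}{w^2}\,\textup{d}w
\]
for $c > 1/2$. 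The plan is to shift the contour to $\textup{Re}(w) = -1/(2\log x)$. Under GRH, the branch points of $w\mapsto\log L(\sigma_0+it+w,\pi)$ lie at $w = -1/\log x + i(\gamma - t)$, so the new contour avoids them; only the double pole at $w=0$ is crossed, contributing residue $\log L(\sigma_0+it,\pi) + (L'/L)(\sigma_0+it,\pi)/\log x$. Taking real parts, controlling the shifted integral via standard GRH-based bounds on $|\log L|$ near the critical line, and noting that $|\textup{Re}(L'/L)(\sigma_0+it,\pi)|/\log x \ll \log T/\log x$ (which folds into the $m\log T/\log x$ error term) yields the upper bound for $\log|L(\sigma_0+it,\pi)|$ in terms of the prime sum, with $O(1)$ remainder. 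Finally, GRC implies $|\Lambda_\pi(p^k)| \leq m\log p$, so the contribution to the prime sum from $n = p^k$ with $k\geq 3$ is $O(1)$ and may be absorbed, producing the restriction $n = p, p^2$ in the statement.

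The main obstacle is the shifted contour integral in the second step: controlling it requires uniform bounds of the form $|\log L(\sigma+it,\pi)| \ll m\log T$ for $\sigma$ slightly less than $1/2$ under GRH. These are typically obtained by combining the functional equation, Phragm\'en--Lindel\"of convexity, and a Borel--Carath\'eodory-type argument applied to $\log L$, together with RH-based short-interval zero counts to bound the local contribution of zeros. These estimates are standard in the Soundararajan-type literature but use GRH in an essential way.
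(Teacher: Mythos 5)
Your Step 1 (the Hadamard-factorization identity for $\textup{Re}\frac{L'}{L}$, positivity of $\sum_\rho \textup{Re}\frac{1}{s-\rho}$ for $\textup{Re}(s)>\tfrac12$ under GRH, and the resulting passage from $\tfrac12$ to $\sigma_0=\tfrac12+1/\log x$ at a cost of $O(m\log T/\log x)$) is correct and is the standard opening of the Soundararajan--Chandee argument. The truncation to $n=p,p^2$ via GRC at the end is also exactly right; in fact that small estimate is the only thing the paper actually proves, since the displayed inequality with the full sum over prime powers is simply quoted from Milinovich--Turnage-Butterbaugh (their Lemma 3.3, with $\lambda=1$), i.e.\ Chandee's Theorem 2.1.

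The gap is in your Step 2. After shifting to $\textup{Re}(w)=-1/(2\log x)$ you propose to control the shifted integral by a pointwise bound $|\log L(\sigma+it,\pi)|\ll m\log T$. This cannot close: on that line $x^{\textup{Re}(w)}=e^{-1/2}$ is of constant size and $\int |w|^{-2}\,|\textup{d}w|\asymp\log x$, so the prefactor $1/\log x$ is cancelled and a uniform bound $M$ on $|\log L|$ yields only $O(M)=O(m\log T)$ for the shifted integral --- larger by a factor of $\log x$ than the permitted error $m\log T/\log x+O(1)$. Since the paper applies the lemma with $x$ a fixed power of $T$, this loss is fatal, and it cannot be repaired by a better sup bound: under RH, $|\log L|$ genuinely reaches size $\gg \log T/\log\log T$ at points with $\textup{Re}(s)=\tfrac12+1/(2\log x)$, and the branch points at $\textup{Re}(w)=-1/\log x$ prevent shifting further left (in the $\zeta$ case there is also a branch point at $w=1-\sigma_0-it$ inside your strip). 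The actual argument avoids $\log L$ in this step: one applies the kernel $x^w/w^2$ to the meromorphic function $-(L'/L)(s+w)$, keeps the resulting sum over zeros $\frac{1}{\log x}\sum_\rho x^{\rho-s}(\rho-s)^{-2}$ explicit, bounds it by the nonnegative quantity $F(s)=\sum_\rho\textup{Re}\frac{1}{s-\rho}$, controls $F(\sigma_0+it)$ by feeding the Step 1 identity back in (a bootstrap, which is also where the precise constant in $m\log T/\log x$ comes from), and finally integrates over $\sigma\in[\sigma_0,\infty)$ to recover $\log|L(\sigma_0+it,\pi)|$. If you want a self-contained proof you should follow that route; otherwise, as the paper does, cite Chandee's inequality and supply only the $j\geq 3$ prime-power estimate.
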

	
	\begin{proof}
		We will use the formulation from \cite[Lemma 3.3]{MilinoButter1}. Choosing $\lambda=1$ therein gives that $$\log \left|L\left(1/2+it,\pi\right) \right| \leq \text{Re}\left(\sum_{n \leq x} \frac{\Lambda_{\pi}(n)}{n^{1/2+1/\log x+it}\log n}\frac{\log (x/n)}{\log x}\right)+ \frac{m\log T}{\log x}+O\left(\frac{1}{\log x}\right)$$ for $t \in [T,2T]$. By means of (\ref{conditionalVonMangoldtBound}), we see that $$\sum_{p^j \leq x, j\geq 3}\left|\frac{\Lambda_{\pi}(p^j)}{(p^j)^{1/2+1/\log x+it}\log p^j}\frac{\log (x/p^j)}{\log x}\right| \ll \sum_{p^j \leq x, j\geq 3}\frac{1}{jmp^{j/2}} = O(1).$$
	\end{proof}

	We keep the same representations $\pi_1,\dots,\pi_r$ as in the previous section. Recall that they were of degree $m_1,\dots,m_r$ respectively. Following Harper we shall split $[T,2T]$ into many sets depending on how small (or large) the magnitude of certain pieces of the Dirichlet polynomial $$\sum_{p \leq x} \frac{k_1\Lambda_{\pi_1}(p)+\dots+k_r\Lambda_{\pi_r}(p)}{p^{1/2+1/\log x+it}\log(p)}\frac{\log(x/p)}{\log x}$$ is when evaluated in $t$. It will be very convenient to have a shorthand notation for the coefficients appearing in the Dirichlet polynomial above. Therefore we define $$\mathbf{\Lambda}_x(n)\coloneq\frac{k_1\Lambda_{\pi_1}(n)+\dots+k_r\Lambda_{\pi_r}(n)}{\log(n)}\frac{\log(x/n)}{n^{1/\log x}\log x}.$$ Observe that $\log(x/n)/(n^{1/\log x}\log x)\leq 1$ for $n\leq x$. We are now ready to set up all the necessary notation for the proof.
	\begin{align*}
		\widehat{k} &\coloneq  \max \left\{\sum_{i=1}^r m_ik_i, \sum_{i=1}^r k_i^2\right \} \\
		\theta_j &\coloneq \frac{e^{j-1}}{(\log \log T)^2} \\
		T_j &\coloneq \left\{\begin{matrix}
			1 & j=0\\ 
			T^{\theta_j} & j\geq 1
		\end{matrix}\right. \\
		K_j &\coloneq \widehat{k}^{1/2}\theta_j^{-3/4}.
	\end{align*}
	Define $J$ to be the largest integer such that $$T_J \leq T^{e^{-1000\widehat{k}}}.$$ Observe in particular that this implies $J-1 \leq 2\log_3T - 1000\widehat{k}\asymp \log_3 T$. Let 
	\begin{align*}
		\mathcal{P}_{j,x}(s)&\coloneq\sum_{T_{j-1}<p\leq T_j} \frac{\mathbf{\Lambda}_x(p)}{p^s}, \\
		\mathcal{N}_{j,x}(s) &\coloneq \sum_{\substack{\Omega(n)\leq 10K_j, \\ p \mid n \implies T_{j-1}<p\leq T_j}} \frac{\mathfrak{g}_x(n)}{n^s},
	\end{align*}
	where $\mathfrak{g}_x(n)$ is defined on prime powers by $$\mathfrak{g}_x(p^\eta)=\frac{1}{\eta!}\mathbf{\Lambda}_x(p)^\eta,$$ and extended multiplicatively. A simple computation shows that the length of the full product $\prod_{j=1}^{\mathscr{J}} |\mathcal{N}_{j,x}(\tfrac12+it)|^2$ is $\leq T^{6/10}$.
	
	If $\mathcal{P}_{j,x}(s)$ is of ``typical'' size, we can estimate $\exp(\mathcal{P}_{j,x}(s))$ rather sharply by $\mathcal{N}_{j,x}(s)$. We capture this behaviour by the following set: $$\mathscr{G} \coloneq \left\{ t \in [T,2T] : \left|\mathcal{P}_{j,T_J}\left(\frac12 + it\right) \right| \leq K_j \enspace \forall 1 \leq j \leq J \right\}.$$ On some other sets we will have typical behaviour of the $\mathcal{P}_{j,x}$ for all $j$ up to some index $\ell$, then some irregular behaviour after that. We capture this behaviour by the following sets
	$$\mathscr{B}_j \coloneq \left\{t \in [T,2T]:\begin{split}
		\left|\mathcal{P}_{r,T_s}\left(\frac12 +it\right)\right|&\leq K_r \enspace \forall 1\leq r < j \text{ and } r\leq s \leq J, \text{but} \\ \left|\mathcal{P}_{j,T_s}\left(\frac12 +it\right) \right| &> K_j \text{ for some } j \leq s \leq J.
	\end{split} \right\}.$$
	Observe that $$[T,2T]=\mathscr{G} \cup \bigcup_{j=1}^J \mathscr{B}_j.$$ It will also be nice to further decompose $\mathscr{B}_j$ into sets 
	$$\mathscr{B}_{j,\ell} \coloneq \left\{t \in [T,2T] : \begin{split}
		|\mathcal{P}_{r,T_{j-1}}(\tfrac12+it)| &\leq K_r \enspace \forall 1\leq r < j,  \\ \textup{but } |\mathcal{P}_{j,T_{\ell}}(\tfrac12 +it)|&>K_j 
	\end{split}\right\}.$$  This concludes the setup of all the necessary notation. 
	
	We shall need the following basic lemma.
	
	\begin{lem}\label{RadziwillTruncationLemma}
		Assume we are given a Dirichlet polynomial $D(s)=\sum_{p \leq Y} a(p)p^{-s}$. If $t \in [T,2T]$ is such that $|D(s)|\leq V$, then $$\exp\left(2\textup{Re} D(s)\right) = (1+O(e^{-9V}))\left|\sum_{j \leq 10V} \frac{(D(s))^j}{j!}\right|^2.$$
		In particular if $|\mathcal{P}_{j,X}(\tfrac12+it)|\leq K_j$ for $1\leq \mathscr{J} \leq J$, then  $$\prod_{j=1}^{\mathscr{J}}\exp\left(2\textup{Re}\mathcal{P}_{j,X}(\tfrac12 +it)\right) \ll \prod_{j=1}^{\mathscr{J}} |\mathcal{N}_{j,x}(\tfrac12+it)|^2$$ 
	\end{lem}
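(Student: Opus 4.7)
The plan is to reduce both statements to an elementary tail bound for the Taylor series of $\exp$ at a point $z$ with $|z|\leq V$. Writing $\exp(2\operatorname{Re} D(s))=|\exp(D(s))|^2$, the first assertion becomes the claim that the partial sum $S_V(s)\coloneq\sum_{j\leq 10V} D(s)^j/j!$ approximates $\exp(D(s))$ extremely well in relative terms; the second assertion then follows once one recognises $\mathcal{N}_{j,x}(s)$ as precisely this truncation applied to $\mathcal{P}_{j,x}(s)$ at level $V=K_j$.

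For the first part I would use the crude bound $j!\geq (j/e)^j$. For $j>10V$ and $|D(s)|\leq V$, this gives $|D(s)|^j/j!\leq (eV/j)^j\leq (e/10)^j$, and summing the geometric tail yields $|R_V(s)|\coloneq |\exp(D(s))-S_V(s)|\ll (e/10)^{10V}\ll e^{-13V}$, with room to spare. Since $|\exp(D(s))|\geq e^{-V}$, we have $|S_V(s)|\geq e^{-V}/2$ for $V$ large, so the relative error satisfies $|R_V(s)|/|S_V(s)|\ll e^{-12V}$. Expanding
\[
|\exp(D(s))|^2=|S_V(s)|^2+2\operatorname{Re}\bigl(\overline{S_V(s)}R_V(s)\bigr)+|R_V(s)|^2=|S_V(s)|^2\bigl(1+O(e^{-12V})\bigr)
\]
then gives the first assertion, the exponent $-9V$ in the statement serving as a comfortable safety margin.

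For the ``in particular'' part, the key combinatorial observation is that expanding $\mathcal{P}_{j,x}(s)^k/k!$ via the multinomial theorem and grouping terms by the product $n=p_1\cdots p_k$ yields
\[
\frac{\mathcal{P}_{j,x}(s)^k}{k!}=\sum_{\substack{\Omega(n)=k\\ p\mid n\Rightarrow T_{j-1}<p\leq T_j}}\frac{\mathfrak{g}_x(n)}{n^s},
\]
since the number of ordered factorisations of $n$ into $k$ primes is precisely $k!/\prod_p\eta_p(n)!$, which matches the definition of $\mathfrak{g}_x$. Summing over $k\leq 10K_j$ recovers $\mathcal{N}_{j,x}(s)$ on the nose. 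Applying the first assertion with $D=\mathcal{P}_{j,x}(\tfrac12+it)$ and $V=K_j$ for each $1\leq j\leq\mathscr{J}$ and multiplying, the desired bound reduces to showing $\prod_{j=1}^{\mathscr{J}}\bigl(1+O(e^{-9K_j})\bigr)=O(1)$. Taking logarithms, this in turn reduces to $\sum_{j=1}^J e^{-9K_j}=O(1)$; but $K_j$ is decreasing in $j$, and the defining inequality $T_J\leq T^{e^{-1000\widehat{k}}}$ forces $\theta_J\leq e^{-1000\widehat{k}}$ and hence $K_J\geq \widehat{k}^{1/2}e^{750\widehat{k}}$, which is a large constant. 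The sum is then dominated by $J\cdot e^{-9K_J}=o(1)$, completing the argument.

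The only real obstacle here is the combinatorial identification of $\mathcal{N}_{j,x}(s)$ with the truncated exponential of $\mathcal{P}_{j,x}(s)$; the analytic heart of the proof is routine Taylor-series bookkeeping.
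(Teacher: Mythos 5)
Your proof follows essentially the same route as the paper's (which simply cites Heap for the truncated-exponential estimate and invokes the multinomial theorem for the identification of $\mathcal{N}_{j,x}$ with the truncation of $\exp(\mathcal{P}_{j,x})$); you have just written out the details, and both the tail estimate $\sum_{j>10V}V^j/j!\ll (e/10)^{10V}\ll e^{-13V}$ and the combinatorial identity $\mathcal{P}_{j,x}(s)^k/k!=\sum_{\Omega(n)=k}\mathfrak{g}_x(n)n^{-s}$ are correct.

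There is, however, one genuine slip in your final step. You claim $\sum_{j=1}^J e^{-9K_j}\leq J\cdot e^{-9K_J}=o(1)$. The inequality is fine, but the right-hand side is not $o(1)$: by maximality of $J$ one has $e^{-1000\widehat{k}-1}<\theta_J\leq e^{-1000\widehat{k}}$, so $K_J\asymp \widehat{k}^{1/2}e^{750\widehat{k}}$ is a \emph{bounded} constant, while $J\asymp\log\log\log T\to\infty$; hence $J e^{-9K_J}\to\infty$. The conclusion $\sum_j e^{-9K_j}=O(1)$ is nonetheless true, but for a different reason: since $\theta_j=e^{j-1}(\log\log T)^{-2}$, one has $K_j=K_J e^{3(J-j)/4}$, so the terms $e^{-9K_j}=\exp(-9K_J e^{3(J-j)/4})$ decay doubly exponentially as $j$ decreases from $J$, and the sum is dominated by the convergent series $\sum_{i\geq 0}\exp(-cK_J e^{3i/4})$. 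With that replacement your argument is complete.
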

	
	\begin{proof}
		The first part follows from \cite[Eq. 37]{Heap1}. Applying the first part of the lemma, to each of the $\exp\left(2\textup{Re}\mathcal{P}_{j,X}(\tfrac12 +it)\right)$, then using the multinomial theorem, we obtain the desired result.
	\end{proof}
	
	\begin{prop}\label{upperBoundInitialIneq}
		For any positive integers $s_j$ we have
		\begin{align*}
			&\int_{T}^{2T} \prod_{i=1}^r |L(\tfrac12+it,\pi_i)|^{2k_i} \dt \\
			\ll &\int_{T}^{2T} |\mathcal{M}_{T_J}(1+2it)|^2\prod_{j=1}^J  |\mathcal{N}_{j,T_J}(\tfrac12+it)|^2 \dt \\
			+ &\sum_{\substack{1 \leq j \leq J, \\ j\leq \ell \leq J}}\int_{T}^{2T}|\mathcal{M}_{T_\ell}(1+2it)|^2\exp\left(\frac{2\widehat{k}}{\theta_{j-1}}\right)\left(\frac{|\mathcal{P}_{j,T_\ell}(\tfrac12+it)|}{K_j}\right)^{2s_j}\prod_{i=1}^{j-1} |\mathcal{N}_{i,T_\ell}(\tfrac12+it)|^2\dt  \\
			+ \,\, &O((\log \log \log T)^2T(\log T)^{-1})
		\end{align*}
		where $$\mathcal{M}_x(s)=\sum_{\substack{\Omega(n)\leq 10(\log\log T)^2, \\ p \mid n \implies p \leq \log T}} \frac{\mathfrak{h}_x(n)}{n^{s}},$$ and $\mathfrak{h}_x(n)$ is multiplicative, defined on prime powers by $$\mathfrak{h}_x(p^\eta) =\frac{1}{\eta!}\mathbf{\Lambda}_x(p^2)^\eta.$$
	\end{prop}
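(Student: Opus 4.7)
The plan is to pointwise bound the integrand via Chandee's inequality (Lemma \ref{chandeeLem}) with a truncation parameter $x = T^{\theta}$ chosen according to the subset of $[T,2T]$ we are on, and then convert the resulting exponentials of prime sums into Dirichlet polynomials via the Radziwi\l\l--type truncation in Lemma \ref{RadziwillTruncationLemma}. Raising Chandee's bound to the power $2k_i$ and multiplying over $i$ gives, for any admissible $\theta$,
\begin{equation*}
	\prod_{i=1}^r |L(\tfrac12 + it, \pi_i)|^{2k_i} \ll \exp\!\left(\frac{2\widehat{k}}{\theta}\right) \exp\!\left(2\textup{Re} \sum_{\substack{n \leq T^\theta, \\ n=p,p^2}} \frac{\mathbf{\Lambda}_{T^\theta}(n)}{n^{1/2+it}}\right).
\end{equation*}
After decomposing $[T,2T] = \mathscr{G} \cup \bigcup_{1\leq j \leq \ell \leq J} \mathscr{B}_{j,\ell}$, I would treat the two types of sets separately and sum the resulting estimates.

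On $\mathscr{G}$ I would take $\theta = \theta_J$. Since $\theta_J \geq e^{-1001\widehat{k}}$ is bounded below in terms of $\widehat{k}$ only, the factor $\exp(2\widehat{k}/\theta_J)$ is absorbed into the implicit constant. The prime portion of the Chandee sum decomposes as $\sum_{j=1}^J \mathcal{P}_{j, T_J}(\tfrac12+it)$, and on $\mathscr{G}$ each summand satisfies $|\mathcal{P}_{j, T_J}(\tfrac12+it)| \leq K_j$, so Lemma \ref{RadziwillTruncationLemma} converts $\prod_j \exp(2\textup{Re}\mathcal{P}_{j, T_J})$ into $\prod_{j=1}^J |\mathcal{N}_{j, T_J}(\tfrac12+it)|^2$. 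The prime-square part, sitting at $s = 1 + 2it$, is split at $p = \log T$: the tail over $p > \log T$ yields a bounded factor since $\sum_p |\mathbf{\Lambda}_{T_J}(p^2)|/p$ converges on $\textup{Re}(s) = 1$, while the head over $p \leq \log T$, after Taylor expanding the exponential and truncating to $\Omega(n) \leq 10(\log\log T)^2$, is precisely $|\mathcal{M}_{T_J}(1+2it)|^2$. This recovers the first line of the bound.

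On $\mathscr{B}_{j,\ell}$ I would pick $\theta = \theta_{j-1}$, so that the Chandee factor $\exp(2\widehat{k}/\theta_{j-1})$ matches the statement and the Chandee prime sum involves only primes up to $T_{j-1}$. The prime portion then splits as $\sum_{r=1}^{j-1} \mathcal{P}_{r, T_{j-1}}$, and the defining bounds $|\mathcal{P}_{r, T_{j-1}}(\tfrac12+it)| \leq K_r$ for $r < j$ enable Lemma \ref{RadziwillTruncationLemma} to yield $\prod_{r=1}^{j-1} |\mathcal{N}_{r, T_{j-1}}(\tfrac12+it)|^2$, while the prime-square piece yields $|\mathcal{M}_{T_{j-1}}(1+2it)|^2$ as before. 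Since $(|\mathcal{P}_{j, T_\ell}(\tfrac12+it)|/K_j)^{2s_j} \geq 1$ holds on $\mathscr{B}_{j,\ell}$ by the second defining inequality, this quantity may be inserted for free, and the resulting bound is then integrated over all of $[T,2T]$ (dropping the indicator on $\mathscr{B}_{j,\ell}$). A final comparison step lets one swap the $T_{j-1}$-indexed polynomials for the $T_\ell$-indexed ones appearing in the statement, since the coefficients $\mathbf{\Lambda}_x(p)$ differ only by bounded multiplicative factors as $x$ moves through the relevant range for the relevant primes.

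The main obstacles I anticipate are (i) making precise the swap from $\mathcal{N}_{r, T_{j-1}}, \mathcal{M}_{T_{j-1}}$ to $\mathcal{N}_{r, T_\ell}, \mathcal{M}_{T_\ell}$ without losing control of constants, given that the coefficients $\mathbf{\Lambda}_x(p)$ enter the multiplicative functions $\mathfrak{g}_x, \mathfrak{h}_x$ to powers $\Omega(n)$ as large as $10 K_1 \asymp (\log\log T)^{3/2}$; and (ii) absorbing the cumulative error $\prod_j(1 + O(e^{-9K_j}))$ produced by each application of Lemma \ref{RadziwillTruncationLemma} together with trivial GLH bounds on exceptional sets of very small measure, which together contribute the final error term $O((\log\log\log T)^2 T/\log T)$ once one sums over the roughly $J^2 \asymp (\log\log\log T)^2$ pairs $(j,\ell)$.
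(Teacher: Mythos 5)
Your architecture coincides with the paper's (Chandee's inequality with a set-dependent truncation $x$, the decomposition $[T,2T]=\mathscr{G}\cup\bigcup_{j,\ell}\mathscr{B}_{j,\ell}$, Lemma \ref{RadziwillTruncationLemma} on the controlled pieces, and the free insertion of $(|\mathcal{P}_{j,T_\ell}(\tfrac12+it)|/K_j)^{2s_j}\geq 1$), but there is a genuine gap in your treatment of the prime squares. You assert that the tail over $\log T<p\leq x$ of the square part is pointwise bounded ``since $\sum_p|\mathbf{\Lambda}_{T_J}(p^2)|/p$ converges''. It does not converge: under GRC one only has $|\mathbf{\Lambda}_x(p^2)|\ll_{\widehat{k}}1$ with no decay in $p$, so $\sum_{\log T<p\leq x}|\mathbf{\Lambda}_x(p^2)|/p\asymp\log\log x$, and the trivial pointwise bound on $\textup{Re}\sum_{\log T<p\leq x}\mathbf{\Lambda}_x(p^2)p^{-1-2it}$ is $\asymp\log\log T$, which would cost an unacceptable factor $(\log T)^{O(1)}$ inside the exponential. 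Mertens' theorem only disposes of the range $\log T<p\leq(\log T)^{5(k_1^2+\dots+k_r^2+1)}$. For the remaining range the paper introduces the exceptional set $E_x$ on which this sum exceeds a fixed constant, bounds $\textup{meas}(E_x)\ll T(\log T)^{-5(k_1^2+\dots+k_r^2+1)}$ by Chebyshev, and controls $\int_{E_x}\prod_i|L(\tfrac12+it,\pi_i)|^{2k_i}\dt$ by Cauchy--Schwarz against the $4(k_1,\dots,k_r)$th moment bound of Milinovich--Turnage-Butterbaugh; this is the actual source of the additive $O(T(\log T)^{-1})$ per piece, which summed over the $\asymp J^2\asymp(\log_3T)^2$ pairs $(j,\ell)$ gives the stated error term. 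Your proposed substitute, ``trivial GLH bounds on exceptional sets of very small measure'', cannot close this: GLH only gives $\prod_i|L|^{2k_i}\ll T^{\varepsilon}$ pointwise, while the relevant exceptional sets have measure that is only polylogarithmically small relative to $T$, so the product of the two is $\gg T$. (The factors $1+O(e^{-9K_j})$ from the truncation lemma are multiplicative and harmless, as you suspect; they are not where the additive error comes from.)

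A secondary issue is your ``final comparison step'' swapping $\mathcal{N}_{i,T_{j-1}},\mathcal{M}_{T_{j-1}}$ for $\mathcal{N}_{i,T_\ell},\mathcal{M}_{T_\ell}$. A pointwise inequality between moduli of oscillating Dirichlet polynomials does not follow from the coefficients differing by bounded factors (and here the ratio $\mathbf{\Lambda}_{T_{j-1}}(p)/\mathbf{\Lambda}_{T_\ell}(p)$ even tends to $0$ as $p\to T_{j-1}$, since $\log(x/p)/\log x$ vanishes at $p=x$). To be fair, the paper is itself loose at exactly this point: it announces the choice $x=T_{j-1}$ in Chandee's lemma but records $T_\ell$-indexed polynomials in the conclusion, and on $\mathscr{B}_{j,\ell}$ as defined one only controls $|\mathcal{P}_{r,T_{j-1}}(\tfrac12+it)|\leq K_r$ for $r<j$, not $|\mathcal{P}_{r,T_\ell}(\tfrac12+it)|\leq K_r$. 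The clean fixes are either to carry the $T_{j-1}$-indexed polynomials through (the subsequent mean-value computation is insensitive to the index), or to enlarge the defining conditions of $\mathscr{B}_{j,\ell}$ so that the truncation lemma applies directly with $x=T_\ell$ on the pieces $r<j$; a coefficientwise pointwise comparison is not a valid route.
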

	
	\begin{proof}
		Starting with some $t \in [T,2T]$ we have the following inequality 
		\begin{equation}\label{prodOfLFuncsIneq}
			\prod_{i=1}^{r} |L(\tfrac12+it,\pi_i)|^{2k_i} \leq \mathbf{1}_{t \in \mathscr{G}}\cdot \prod_{i=1}^{r} |L(\tfrac12+it,\pi_i)|^{2k_i} + \sum_{1 \leq j \leq J} \mathbf{1}_{t \in \mathscr{B}_j} \cdot \prod_{i=1}^{r} |L(\tfrac12+it,\pi_i)|^{2k_i}.
		\end{equation}
		We now aim to apply Lemma \ref{chandeeLem} to all of the terms above by themselves. We shall start by showing that we can take the sum over prime squares in this approximation to be very short. To this end, define $$E_x\coloneq \left \{t \in [T,2T] : \left|\sum_{(\log T)^{5(k_1^2+\dots+k_r^2+1)}<p\leq x} \frac{\mathbf{\Lambda}_x(p^2)}{p^{1+2it}}\right| > k_1m_1+\dots+k_rm_r \right\},$$
		where we assume $x>(\log T)^{5(k_1^2+\dots+k_r^2+1)}$ (which will be the case in our applications). Applying Chebyshev's inequality and GRC, one sees that $\text{meas}(E_x)\ll T(\log T)^{-5(k_1^2+\dots+k_r^2+1)}$. By Cauchy--Schwarz and bounds for the $4(k_1,\dots,k_r)$th moment given in \cite[Theorem 1.1.]{MilinoButter1}, we thus get that $$\int_{E_x} \prod_{i=1}^r |L(\tfrac12+it,\pi_1)|^{2k_i}\dt \ll T(\log T)^{-1}.$$ Applying Lemma \ref{chandeeLem} with $x=T_J$, alongside the bound above we get that
		\begin{align*}
			&\int_{\mathscr{G}} \prod_{i=1}^{r} |L(\tfrac12+it,\pi_i)|^{2k_i} \dt \\
			\leq &\int_{\mathscr{G} \cap E_{T_J}^c} \prod_{i=1}^r|L(\tfrac12+it,\pi_i)|^{2k_i} \dt + \int_{E_{T_J}} \prod_{i=1}^r |L(\tfrac12+it,\pi_i)|^{2k_i} \dt \\
			\ll &\int_{\mathscr{G} \cap E_{T_J}^c} \exp \left(2\text{Re}\sum_{\substack{n \leq T_J,\\n=p,p^2}} \frac{\mathbf{\Lambda}_{T_J}(n)}{n^{1/2+it}} + \frac{\widehat{k}\log T}{\log T_J} \right) \dt + O(T(\log T)^{-1}). 
		\end{align*}
		GRC and Mertens theorem implies that $$\left|\sum_{\log T <p\leq(\log T)^{5(k_1^2+\dots+k_r^2+1)}} \frac{k_1\Lambda_{\pi_1}(p^2)+\dots+k_r\Lambda_{\pi_r}(p^2)}{p^{1+2it+2/\log T_J}}\frac{\log(T_J/p^2)}{\log T_J}\right|=O(1).$$ Combining this with the observation that $\frac{\widehat{k}\log T}{\log T_J}=\frac{\widehat{k}}{\theta_J} = O(1)$, we see that
		\begin{align*}
			&\int_{\mathscr{G}\cap E_{T_J}^c} \exp \left(2\text{Re}\sum_{\substack{n \leq T_J,\\n=p,p^2}} \frac{\mathbf{\Lambda}_{T_J}(p)}{p^{1/2+it}}+\frac{\widehat{k}\log T}{\log T_J}\right)\dt \\
			\ll &\int_{\mathscr{G} \cap E_{T_J}^c} \exp\left(2\text{Re}\left(\sum_{\substack{p \leq T_J}} \frac{\mathbf{\Lambda}_{T_J}(p)}{p^{1/2+it}} + \sum_{p \leq \log T} \frac{\mathbf{\Lambda}_{T_J}(p^2)}{p^{1+2it}}\right)\right)\dt.
		\end{align*}
		By Lemma \ref{RadziwillTruncationLemma} the integrand can be upper bounded in the following way:
		\begin{align*}
			\exp\left(2\text{Re}\left(\sum_{\substack{p \leq T_J}} \frac{\mathbf{\Lambda}_{T_J}(p)}{p^{1/2+it}}\right)\right) &\ll \prod_{j=1}^J |\mathcal{N}_{j,T_J}(\tfrac12+it)|^2, \\
			\exp\left(2\text{Re}\left(\sum_{p \leq \log T} \frac{\mathbf{\Lambda}_x(p^2)}{p^{1+2it}}\right)\right) &\ll |\mathcal{M}_x(t)|^2.
		\end{align*}
		Piecing the foregoing page of computations together, we conclude with the following inequality
		\begin{equation}\label{goodSetIneqSetupLemma}
			\int_{\mathscr{G}} \prod_{i=1}^{r} |L(\tfrac12+it,\pi_i)|^{2k_i} \dt \ll \int_{T}^{2T} |\mathcal{M}_{T_J}(1+2it)|^2\prod_{j=1}^J  |\mathcal{N}_{j,T_J}(\tfrac12+it)|^2 \dt + O(T(\log T)^{-1}),
		\end{equation}
		where we extended the integration range in the end by positivity of the integrand.
		
		We follow the procedure above when integrating over $\mathscr{B}_j$, except some minor changes. First and foremost, in Lemma \ref{chandeeLem} we instead choose $x=T_{j-1}$. We also split up the $\mathscr{B}_j$ into the sets $\mathscr{B}_{j,\ell}$: $$\sum_{1 \leq j \leq J} \mathbf{1}_{t \in \mathscr{B}_j} \cdot \prod_{i=1}^{r} |L(\tfrac12+it,\pi_i)|^{2k_i} \leq \sum_{\substack{1 \leq j \leq J, \\ j\leq \ell \leq J}} \mathbf{1}_{t \in \mathscr{B}_{j,\ell}}\prod_{i=1}^r|L(\tfrac12+it,\pi_i)|^{2k_i}.$$
		On $\mathscr{B}_{j,\ell}$, $$\left(\frac{\mathcal{P}_{j,T_\ell} \left(\tfrac12+it\right)}{K_j}\right)^{2s_j} >1$$ for any integer $s_j$. On each $\mathscr{B}_{j,\ell}$ we can only apply Lemma \ref{RadziwillTruncationLemma} up to the index $j-1$. The final inequality is 
		\begin{align}
			&\int_{\mathscr{B}_{j,\ell}} \prod_{i=1}^r |L(\tfrac12+it,\pi_i)|^{2k_i} \dt\\
			\ll  &\int_{T}^{2T}|\mathcal{M}_{T_\ell}(1+2it)|^2\exp\left(\frac{2\widehat{k}}{\theta_{j-1}}\right)\left(\frac{|\mathcal{P}_{j,T_\ell}(\tfrac12+it)|}{K_j}\right)^{2s_j}\prod_{i=1}^{j-1} |\mathcal{N}_{i,T_\ell}(\tfrac12+it)|^2\dt \nonumber \\
			+ &O(T(\log T)^{-1}). \nonumber
		\end{align}
		Now we sum this inequality over $1\leq j \leq J$ and $j \leq \ell \leq J$ and conclude.
	\end{proof}
	
	To finish the proof of the upper bound for the moments, we only have to compute the mean values appearing in Proposition \ref{upperBoundInitialIneq}. A tool that will greatly aid us in this endeavour is the following lemma on coprime Dirichlet polynomials.
	
	\begin{lem}\label{coprimeDirichletPolysLemma}
		Let $D_1,\cdots,D_M$ be a collection of Dirichlet polynomials, defined by $$D_j(s)\coloneq \sum_{n \in \mathcal{D}_j} \frac{\mathfrak{d}_j(n)}{n^s}.$$ Suppose that if $n \in \mathcal{D}_i$, $m \in \mathcal{D}_j$ with $i\neq j$, then $\gcd(n,m)=1$. Suppose $D_1\cdots D_M(s)$ have length $N$, then $$\frac{1}{T}\int_{T}^{2T} \left|\prod_{j=1}^{M} D_j(s)\right|^2 \dt = (1+O(NT^{-1}\log N )) \prod_{j=1}^M \left(\frac{1}{T} \int_{T}^{2T} |D_j(s)|^2 \dt \right).$$
	\end{lem}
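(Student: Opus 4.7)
The key ingredient is that the pairwise coprimality of the supports $\mathcal{D}_j$ promotes the product into a single Dirichlet polynomial of length $N$ with factorizable coefficients. Specifically, each $n$ appearing in $\prod_j D_j$ can be written \emph{uniquely} as $n = n_1 \cdots n_M$ with $n_j \in \mathcal{D}_j$, so
$$\prod_{j=1}^M D_j(s) = \sum_{n \leq N} \frac{c(n)}{n^s}, \qquad c(n_1\cdots n_M) = \prod_{j=1}^M \mathfrak{d}_j(n_j).$$
Unique factorization causes both $\sum_n |c(n)|^2 = \prod_j M_j$ and $\sum_n |c(n)|^2 n = \prod_j \sum_{n_j}|\mathfrak{d}_j(n_j)|^2 n_j$ to split as products over $j$, where $M_j \coloneq \sum_{n_j}|\mathfrak{d}_j(n_j)|^2$.

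With this in hand, the plan is to apply the Montgomery--Vaughan mean value theorem both to the product on the left and to each individual factor $D_j$, and to compare. Applied to the left-hand side,
$$\frac{1}{T}\int_T^{2T} \Big|\prod_j D_j(s)\Big|^2 \dt = \prod_j M_j + O\Big(\frac{1}{T}\prod_j \sum_{n_j}|\mathfrak{d}_j(n_j)|^2 n_j\Big),$$
and bounding $n_j \leq N_j$ pointwise in each factor of the error, combined with $\prod_j N_j = N$, gives an overall multiplicative error of $1 + O(N/T)$. Applied to a single factor, Montgomery--Vaughan yields $T^{-1}\int_T^{2T}|D_j(s)|^2 \dt = M_j(1 + O(N_j/T))$, and multiplying over $j$ produces $\prod_j M_j \cdot \prod_j (1 + O(N_j/T))$.

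Comparing the two computations, the claim reduces to showing $\prod_j (1 + O(N_j/T)) = 1 + O(NT^{-1}\log N)$. The estimate is vacuous unless $NT^{-1}\log N = o(1)$, so I may assume each $N_j/T$ is small and linearize: $\prod_j(1 + O(N_j/T)) = 1 + O(\sum_j N_j/T)$. The only non-routine step is bounding $\sum_j N_j$ in terms of $N$, which is where the $\log N$ factor arises. Any $D_j$ of length $1$ contributes no error and can be discarded, so I may assume $N_j \geq 2$ for all remaining $j$; from $\prod_j N_j = N$ this forces $M \leq \log_2 N$, and together with $N_j \leq N$ it gives $\sum_j N_j \leq MN \ll N\log N$, precisely the error claimed. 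This final bookkeeping on $M$ is the main (and very mild) obstacle; everything else is a direct application of Montgomery--Vaughan and coprime factorization.
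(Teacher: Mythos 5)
Your proof is correct, and it reconstructs essentially the standard argument. Note that the paper does not actually prove this lemma: its ``proof'' is a one-line citation to \cite[Eq.~16]{HeapSound1}, where the result is established in precisely the way you describe (coprimality forces a unique factorization of the coefficients, Montgomery--Vaughan is applied both to the full product and to each individual factor, and the two resulting error terms are compared). So while there is no proof in the present paper to compare against, you have correctly supplied the details behind the citation. One small remark on your bookkeeping: the bound $\sum_j N_j \leq MN \ll N\log N$ is a bit wasteful --- since $N_j \geq 2$ and $\prod_j N_j = N$, elementary optimisation gives $\sum_j N_j \leq N$ outright, so your argument in fact yields the sharper multiplicative error $1 + O(N/T)$. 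Your convention that the bound ``is vacuous unless $NT^{-1}\log N = o(1)$'' deserves a touch of care (strictly, $A = (1+O(E))B$ still asserts the one-sided inequality $A \ll (1+E)B$ even when $E \geq 1$), but since the lemma is only ever invoked in the paper with $N \leq T^{8/10}$, where all of the $N_j/T$ are genuinely $o(1)$, this is harmless.
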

	
	\begin{proof}
		This is \cite[Eq. 16]{HeapSound1}.
	\end{proof}
	
	Let us start with computing the mean value in the first term of Proposition \ref{upperBoundInitialIneq}.
	
	\begin{prop}\label{goodSetLemma}
		$$\int_{T}^{2T} |\mathcal{M}_{T_J}(1+2it)|^2\prod_{j=1}^J  |\mathcal{N}_{j,T_J}(\tfrac12+it)|^2 \dt\ll T(\log T)^{k_1^2+\dots+k_r^2}.$$
	\end{prop}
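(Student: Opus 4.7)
The plan is to reinterpret $\mathcal{M}_{T_J}(1+2it)$ as $\tilde{\mathcal{M}}_{T_J}(\tfrac12+it)$, where $\tilde{\mathcal{M}}_x(s) := \mathcal{M}_x(2s)$ is a Dirichlet polynomial supported on perfect squares $n^2$ with $p \mid n \Rightarrow p \leq \log T$. I would then group $\tilde{\mathcal{M}}_{T_J}$ together with $\mathcal{N}_{1,T_J}$ into $D_1 := \tilde{\mathcal{M}}_{T_J} \mathcal{N}_{1,T_J}$ (supported on integers with all prime factors $\leq T_1$), and set $D_j := \mathcal{N}_{j,T_J}$ for $j \geq 2$ (supported on integers with prime factors in $(T_{j-1}, T_j]$). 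Since $T_1 > \log T$, these $J$ polynomials have pairwise coprime supports, and the product $D_1\cdots D_J$ has length at most $T^{3/10+o(1)}$, so by Lemma \ref{coprimeDirichletPolysLemma} it suffices to show
\begin{equation*}
\prod_{j=1}^J \frac{1}{T}\int_T^{2T}|D_j(\tfrac12+it)|^2 \dt \ll (\log T)^{k_1^2+\cdots+k_r^2}.
\end{equation*}

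For each individual mean value, I would apply Montgomery--Vaughan and drop the constraint $\Omega(n) \leq 10K_j$, which only enlarges the sum since the summand is non-negative. The resulting coefficient sum $\sum_n |c_{D_j}(n)|^2/n$ becomes multiplicative and expands as an Euler product. For primes $p \in (T_{j-1},T_j]$ contributing via $\mathcal{N}_{j,T_J}$, the local factor computes to
\begin{equation*}
\sum_{\eta \geq 0} \frac{|\mathbf{\Lambda}_{T_J}(p)|^{2\eta}}{(\eta!)^2 p^\eta} = 1 + \frac{|\mathbf{\Lambda}_{T_J}(p)|^2}{p} + O(p^{-2}),
\end{equation*}
using $|\mathbf{\Lambda}_{T_J}(p)| \ll 1$ from GRC. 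The finitely many primes $p \leq \log T$ in $D_1$, where both $\tilde{\mathcal{M}}_{T_J}$ and $\mathcal{N}_{1,T_J}$ contribute, produce a combined local factor that is plainly $O(1)$.

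Multiplying these local factors across all primes and taking logarithms, the total bound will come out to
\begin{equation*}
\prod_{j=1}^J \frac{1}{T}\int_T^{2T}|D_j(\tfrac12+it)|^2 \dt \ll \exp\!\Bigl(\sum_{p \leq T_J} \frac{|\mathbf{\Lambda}_{T_J}(p)|^2}{p} + O(1)\Bigr).
\end{equation*}
Since $|\mathbf{\Lambda}_{T_J}(p)|^2 \leq |k_1 a_{\pi_1}(p) + \cdots + k_r a_{\pi_r}(p)|^2$ (the smoothing factor $p^{-1/\log T_J}\log(T_J/p)/\log T_J$ lies in $[0,1]$), Selberg's orthogonality conjecture will give
\begin{equation*}
\sum_{p \leq T_J} \frac{|\mathbf{\Lambda}_{T_J}(p)|^2}{p} \leq (k_1^2+\cdots+k_r^2)\log\log T_J + O(1) \leq (k_1^2+\cdots+k_r^2)\log\log T + O(1),
\end{equation*}
where the final inequality uses $T_J \leq T^{e^{-1000\widehat{k}}}$. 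Exponentiating then gives the claim.

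The main subtlety will be ensuring the $O(1)$ errors from Selberg's orthogonality conjecture do not accumulate across the $J \asymp \log\log\log T$ intervals: this is handled by expanding the full Euler product over \emph{all} primes simultaneously before invoking Selberg just once, rather than applying it interval by interval. Likewise, the $\sum_p O(p^{-2})$ error coming from the Taylor expansion of each $\log L_p$ is absolutely convergent, giving a total $O(1)$ contribution regardless of $J$.
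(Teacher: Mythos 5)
Your proposal is correct and follows essentially the same route as the paper: group $\mathcal{M}_{T_J}(1+2it)$ (rewritten as a Dirichlet polynomial in $\tfrac12+it$ supported on squares of $\log T$-smooth numbers) together with $\mathcal{N}_{1,T_J}$, invoke Lemma \ref{coprimeDirichletPolysLemma}, compute each coprime factor by Montgomery--Vaughan after dropping the $\Omega$ truncation, and multiply the Euler-product bounds, finishing with Selberg orthogonality. The paper's treatment of the $D_1$ factor is slightly more explicit — it bounds $|c_{D_1}(n)|^2$ by the square of the absolute-value convolution \emph{before} removing the $\Omega$ constraints and introduces the multiplicative majorant $\mathfrak{a}$ — a step your write-up elides (the squared convolution coefficient is not itself a sum of nonnegative terms, so enlarging its support is only justified after the triangle inequality), but this is a presentational rather than a substantive difference.
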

	
	\begin{proof}
		Let $$\mathcal{A}(t)=\mathcal{M}_{T_J}(1+2it)\mathcal{N}_{1,T_J}(1/2+it).$$ Applying\footnote{Observe that the Dirichlet polynomial in question is short enough. As mentioned earlier the big product has length $\leq T^{6/10}$. The Dirichlet polynomial $\mathcal{M}_{T_J}$ is very short, say $\leq T^{1/1000}$.} Lemma \ref{coprimeDirichletPolysLemma} we have 
		\begin{align*}
			&\frac{1}{T}\int_{T}^{2T} |\mathcal{M}_{T_J}(1+2it)|^2\prod_{j=1}^J  |\mathcal{N}_{j,T_J}(\tfrac12+it)|^2 \dt\\
			\ll &\frac{1}{T}\int_{T}^{2T}|\mathcal{A}(t)|^2\dt\cdot\prod_{j=2}^J\left(\frac{1}{T} \int_{T}^{2T} |N_{j,T_J}(\tfrac12+it)|^2 \dt\right).
		\end{align*}
		Each factor in the big product can be computed by Montgomery--Vaughan mean value theorem and simply removing the $\Omega$-condition (which we can since every term is non-negative)
		\begin{align*}
			\frac{1}{T} \int_{T}^{2T} |N_{j,T_J}(\tfrac12+it)|^2 \dt &\ll \sum_{\substack{p \mid n \implies T_{j-1}<p\leq T_j}} \frac{|\mathfrak{g}_{T_J}(n)|^2}{n} \\
			&= \exp\left(\sum_{T_{j-1}<p\leq T_j} \frac{|\mathfrak{g}_{T_J}(p)|^2}{p}+\frac{|\mathfrak{g}_{T_J}(p^2)|^2}{p^2} + \dots \right). 
		\end{align*}
		Hence it follows from the definition of $\mathfrak{g}_x(n)$, GRC and Selberg's orthogonality conjecture that  
		\begin{align*}
			\prod_{j=2}^J\left(\frac{1}{T} \int_{T}^{2T} |N_{j,T_J}(\tfrac12+it)|^2 \dt \right) &\ll \exp\left(\sum_{T_{1}<p\leq T_J} \frac{|k_1a_{\pi_1}(p)+\dots+k_ra_{\pi_r}(p)|^2}{p}\right) \\
			&\ll \left(\frac{\log T}{\log T_1}\right)^{k_1^2+\dots+k_r^2}. 
		\end{align*}

		To compute $\int_T^{2T}|\mathcal{A}(t)|^2\dt$, we realize the coefficients of $\mathcal{A}(t)$ as a convolution. For this purpose, let $\mathfrak{m}(n^2)\coloneq\mathfrak{h}_{T_J}(n)$. Then $$\mathcal{A}(t)=\sum_{n}\frac{1}{n^{1/2+it}}\sum_{\substack{m\ell=n \\ \Omega(m)\leq 10(\log \log T)^2, \\ \Omega(\ell) \leq 10K_1, \\ p \mid m \implies p \leq \log T, \\ p \mid \ell \implies p \leq T_1}} \mathbf{1}_{m \text{ is a square}}\cdot \mathfrak{m}(m)\mathfrak{g}_{T_J}(\ell).$$		
		For the sake of simpler notation, in the computation below, $\sum^{'}$ means that the sum is taken with respect to the additional conditions $p \mid m \implies p \leq \log T$ and  $p \mid \ell \implies p \leq T_1$. By non-negativity we have
		\begin{align*}
			\frac{1}{T}\int_{T}^{2T} |\mathcal{A}(t)|^2 \dt &\ll \sum_{n} \frac{1}{n}\bigg{|}\sideset{}{'}\sum_{\substack{m\ell=n \\ \Omega(m)\leq 10(\log \log T)^2, \\ \Omega(\ell) \leq 10K_1}}  \mathbf{1}_{m \text{ is a square}}\cdot \mathfrak{m}(m)\mathfrak{g}_{T_J}(\ell)\bigg{|}^2 \\
			&\leq \sum_{n} \frac{1}{n}\bigg{(}\sideset{}{'}\sum_{\substack{m\ell=n \\ \Omega(m)\leq 10(\log \log T)^2, \\ \Omega(\ell) \leq 10K_1}} \left| \mathbf{1}_{m \text{ is a square}}\cdot \mathfrak{m}(m)\mathfrak{g}_{T_J}(\ell)\right|\bigg{)}^2 \\
			&\leq \sum_{n} \frac{1}{n} \left(\sideset{}{'}\sum_{m\ell=n} \left| \mathbf{1}_{m \text{ is a square}}\cdot \mathfrak{m}(m)\mathfrak{g}_{T_J}(\ell)\right|\right)^2 \leq \sum_{p \mid n \implies p \leq T_1} \frac{\mathfrak{a}(n)^2}{n}.
		\end{align*}
		Here $\mathfrak{a}(n)$ is the multiplicative function defined on prime powers by $$\mathfrak{a}(p^\eta) \coloneq \sum_{\substack{2n_1+n_2=\eta, \\ n_1,n_2\geq 0}} |\mathfrak{m}(p^{2n_1})\mathfrak{g}_{T_J}(p^{n_2})|.$$ Observe that we have $\mathfrak{a}(p)=|\mathfrak{g}_{T_J}(p)|$. For $\eta \geq 2$, we have $$\mathfrak{a}(p^\eta) \ll \sum_{n_1+n_2=\eta} |\mathfrak{m}(p^{2n_1})\mathfrak{g}_{T_J}(p^{n_2})| \ll \left(\frac{6(k_1m_1+\dots+k_rm_r)}{\eta}\right)^\eta$$ by the multinomial theorem and GRC. Thus $$\sum_{p \mid n \implies p \leq T_1} \frac{\mathfrak{a}(n)^2}{n} \ll \prod_{p \leq T_1} \left(1+\frac{|\mathfrak{g}_{T_J}(p)|^2}{p} + O\left(\frac{1}{p^2}\right)\right) \ll (\log T_1)^{k_1^2+\dots+k_r^2}.$$
	\end{proof}
	
	For the next mean value computation we will use the following lemma on high moments of short Dirichlet polynomials.
	
	\begin{lem}\label{soundHighMomentsLemma}
		Assume $N$ is such that $N^\ell \leq T$ for some integer $\ell$. Then for coefficients $a(p) \in \mathbb{C}$, we have $$\int_{T}^{2T} \left|\sum_{p \leq N} \frac{a(p)}{p^{1/2+it}}\right|^{2\ell}\dt \ll T\ell! \left(\sum_{p \leq N} \frac{|a(p)|^2}{p}\right)^{\ell}.$$
	\end{lem}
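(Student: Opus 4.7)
The plan is to expand the $2\ell$-th power as the square of an $\ell$-th power, apply the Montgomery--Vaughan mean value theorem, and then reduce to a combinatorial comparison with the multinomial expansion of $\bigl(\sum_p |a(p)|^2/p\bigr)^\ell$.

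First I write
\[
\left|\sum_{p \leq N} \frac{a(p)}{p^{1/2+it}}\right|^{2\ell} = \left|\sum_{p \leq N} \frac{a(p)}{p^{1/2+it}}\right|^{2\cdot \ell} = |D(1/2+it)^{\ell}|^2,
\]
where $D(s) = \sum_{p \leq N} a(p) p^{-s}$. Expanding the $\ell$-th power, we get
\[
D(s)^\ell = \sum_{n} \frac{b_\ell(n)}{n^s}, \qquad b_\ell(n) = \sum_{p_1 \cdots p_\ell = n,\, p_j \leq N} a(p_1) \cdots a(p_\ell),
\]
which is supported on integers $n$ with $\Omega(n) = \ell$ and $n \leq N^\ell \leq T$. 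Explicitly, if $n = q_1^{e_1} \cdots q_s^{e_s}$ with $q_j \leq N$ distinct primes and $e_1 + \dots + e_s = \ell$, then
\[
b_\ell(n) = \binom{\ell}{e_1, \dots, e_s} a(q_1)^{e_1} \cdots a(q_s)^{e_s}.
\]

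Next, since the length of $D(s)^\ell$ is at most $N^\ell \leq T$, Montgomery--Vaughan gives
\[
\int_T^{2T} |D(1/2+it)^\ell|^2\, \text{d}t = \sum_{n} \frac{|b_\ell(n)|^2}{n}\bigl(T + O(n)\bigr) \ll T \sum_{n} \frac{|b_\ell(n)|^2}{n}.
\]

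Finally, the combinatorial step: expanding
\[
\left(\sum_{p \leq N} \frac{|a(p)|^2}{p}\right)^{\ell} = \sum_{\substack{n,\, \Omega(n) = \ell \\ n = q_1^{e_1} \cdots q_s^{e_s}}} \frac{1}{n}\binom{\ell}{e_1,\dots,e_s}|a(q_1)|^{2e_1} \cdots |a(q_s)|^{2e_s},
\]
and comparing this with
\[
\sum_n \frac{|b_\ell(n)|^2}{n} = \sum_{\substack{n,\, \Omega(n) = \ell \\ n = q_1^{e_1} \cdots q_s^{e_s}}} \frac{1}{n}\binom{\ell}{e_1,\dots,e_s}^2 |a(q_1)|^{2e_1}\cdots|a(q_s)|^{2e_s},
\]
we see that the ratio of the summands is $\binom{\ell}{e_1,\dots,e_s} \leq \ell!$. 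Hence
\[
\sum_n \frac{|b_\ell(n)|^2}{n} \leq \ell! \left(\sum_{p \leq N} \frac{|a(p)|^2}{p}\right)^\ell,
\]
which combined with the Montgomery--Vaughan bound yields the claim. There is no real obstacle here; the one point requiring a moment's care is the combinatorial comparison above, which rests on the elementary inequality $\binom{\ell}{e_1,\dots,e_s} \leq \ell!$ together with the fact that $b_\ell$ is supported on $\Omega(n) = \ell$, so the two expansions are indexed by exactly the same set of $n$.
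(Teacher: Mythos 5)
Your proof is correct, and it is precisely the standard argument behind the cited result: the paper itself gives no proof but simply refers to Soundararajan's Lemma 3 in \cite{Sound1}, whose proof is the same expansion-plus-Montgomery--Vaughan computation with the multinomial comparison $\binom{\ell}{e_1,\dots,e_s}^2 \leq \ell!\binom{\ell}{e_1,\dots,e_s}$ that you carry out. Nothing to add.
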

	
	\begin{proof}
		See \cite[Lemma 3]{Sound1}.
	\end{proof}
	
	\begin{prop}
		With $s_j=\lfloor\frac{1}{10\theta_j}\rfloor$, we have
		\begin{align*}
			&\sum_{\substack{1 \leq j \leq J, \\ j\leq \ell \leq J}}\int_{T}^{2T}|\mathcal{M}_{T_\ell}(1+2it)|^2\exp\left(\frac{2\widehat{k}}{\theta_{j-1}}\right)\left(\frac{|\mathcal{P}_{j,T_\ell}(\tfrac12+it)|}{K_j}\right)^{2s_j}\prod_{i=1}^{j-1} |\mathcal{N}_{i,T_\ell}(\tfrac12+it)|^2\dt \\ 
			\ll &T(\log T)^{k_1^2+\dots+k_r^2}.
		\end{align*}
	\end{prop}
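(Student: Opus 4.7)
The plan is to apply Lemma \ref{coprimeDirichletPolysLemma} to factorize each $(j,\ell)$-integral into a product of second moments, bound each factor by routine mean-value tools, and then show that $K_j^{-2s_j}$ is engineered precisely to defeat both the growth of the high moment of $\mathcal{P}_{j,T_\ell}$ and the exponential factor $\exp(2\widehat{k}/\theta_{j-1})$.

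After writing $\mathcal{M}_{T_\ell}(1+2it)$ as a Dirichlet polynomial in $\tfrac12+it$ via the substitution $m=n^2$, its prime support lies in $[2,\log T]$, hence is coprime to $\mathcal{P}_{j,T_\ell}^{s_j}$ and to each $\mathcal{N}_{i,T_\ell}$ with $i\geq 2$. The only overlap is with $\mathcal{N}_{1,T_\ell}$, which we merge into $\mathcal{A}_\ell(t):=\mathcal{M}_{T_\ell}(1+2it)\mathcal{N}_{1,T_\ell}(\tfrac12+it)$ as in the proof of Proposition \ref{goodSetLemma}. The choice $s_j=\lfloor 1/(10\theta_j)\rfloor$ forces $T_j^{2s_j}\leq T^{1/5}$, so Lemma \ref{soundHighMomentsLemma} applies to $|\mathcal{P}_{j,T_\ell}|^{2s_j}$, while the total product length is $\leq T^{4/5}$, so Lemma \ref{coprimeDirichletPolysLemma} factorizes with error $o(1)$. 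Each factor is then handled as in Proposition \ref{goodSetLemma}: Montgomery--Vaughan together with Selberg's orthogonality conjecture yields
\begin{equation*}
\frac{1}{T}\int_T^{2T}|\mathcal{A}_\ell(t)|^2\prod_{2\leq i<j}|\mathcal{N}_{i,T_\ell}(\tfrac12+it)|^2\dt\ll(\log T_{j-1})^{k_1^2+\cdots+k_r^2},
\end{equation*}
and Lemma \ref{soundHighMomentsLemma} combined with Selberg orthogonality (observing that $\sum_{T_{j-1}<p\leq T_j}|\mathbf{\Lambda}_{T_\ell}(p)|^2/p=O(\widehat{k})$ since $\log(\theta_j/\theta_{j-1})=1$) gives
\begin{equation*}
\frac{1}{T}\int_T^{2T}|\mathcal{P}_{j,T_\ell}(\tfrac12+it)|^{2s_j}\dt\ll s_j!\,(C\widehat{k})^{s_j}
\end{equation*}
for an absolute constant $C>0$.

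The main obstacle is the balancing computation. Plugging in $K_j^{-2s_j}=\widehat{k}^{-s_j}\theta_j^{3s_j/2}$, using $s_j!\leq s_j^{s_j}$ and $s_j\leq 1/(10\theta_j)$, and invoking $\theta_{j-1}=\theta_j/e$, the combined prefactor $K_j^{-2s_j}s_j!(C\widehat{k})^{s_j}\exp(2\widehat{k}/\theta_{j-1})$ collapses to
\begin{equation*}
\exp\Bigl(\theta_j^{-1}\bigl[-\tfrac{|\log\theta_j|}{20}+2e\widehat{k}+O(1)\bigr]\Bigr).
\end{equation*}
The engineered inequality $T_J\leq T^{e^{-1000\widehat{k}}}$ guarantees $|\log\theta_j|\geq 1000\widehat{k}\gg 40e\widehat{k}$ for every $j\leq J$, so the bracket is $\leq -c_0\max(\widehat{k},|\log\theta_j|)$ for some absolute $c_0>0$, producing super-exponential decay $\exp(-c/\theta_j)$. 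The per-pair contribution is therefore $\ll T(\log T)^{k_1^2+\cdots+k_r^2}\theta_{j-1}^{k_1^2+\cdots+k_r^2}\exp(-c/\theta_j)$, and summing over the $O(J^2)=O((\log\log\log T)^2)$ pairs is easily absorbed by this decay, yielding the claimed bound. The edge case $j=1$, where the Selberg sum for $\mathcal{P}_{1,T_\ell}$ is of size $\log\log T$ rather than $O(1)$ and Chandee's lemma requires a modified truncation point (since $T_0=1$ is inadmissible), is handled separately via Cauchy--Schwarz combined with Chebyshev's inequality for $\mathrm{meas}(\mathscr{B}_{1,\ell})$ and the $4(k_1,\dots,k_r)$th moment bound \eqref{MTBAlmostSharp}, contributing a negligible $T(\log T)^{-A}$ for any fixed $A>0$.
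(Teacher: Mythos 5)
Your treatment of the indices $j\geq 2$ is essentially the paper's own argument: factor the integrand via the coprime-supports Lemma \ref{coprimeDirichletPolysLemma} after merging $\mathcal{M}_{T_\ell}$ with $\mathcal{N}_{1,T_\ell}$ into $\mathcal{A}_\ell$, estimate each factor by Montgomery--Vaughan, handle $|\mathcal{P}_{j,T_\ell}|^{2s_j}$ by Lemma \ref{soundHighMomentsLemma}, and balance $K_j^{-2s_j}s_j!\,\Sigma_j^{s_j}\exp(2\widehat{k}/\theta_{j-1})$ by Stirling against the savings from $\theta_j^{s_j/2}$ and the choice $|\log\theta_j|\geq 1000\widehat{k}$. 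Your bookkeeping of the Stirling computation and the summation over $(j,\ell)$ matches the paper and is correct.

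Where you genuinely part ways with the paper is at $j=1$, and this is the most interesting point of the review. You are right that $j=1$ is anomalous: $\Sigma_1=\sum_{p\leq T_1}|\mathbf{\Lambda}_{T_\ell}(p)|^2/p\asymp (k_1^2+\cdots+k_r^2)\log\log T$ rather than $O(1)$, the factor $(\log T_{j-1}/\log T_1)^{k_1^2+\cdots+k_r^2}$ in the paper's chain degenerates (since $T_0=1$), and $\mathcal{M}_{T_\ell}$ is not coprime to $\mathcal{P}_{1,T_\ell}$ so Lemma \ref{coprimeDirichletPolysLemma} does not factor it off. Repeating the Stirling balance carefully for $j=1$, one finds that $K_1^{-2s_1}s_1!\,\Sigma_1^{s_1}\ll (1/10)^{s_1}\approx\exp(-0.23(\log\log T)^2)$, while $\exp(2\widehat{k}/\theta_0)=\exp(2e\widehat{k}(\log\log T)^2)$; these do not cancel unless $\widehat{k}$ is very small, so the $j=1$ summand in the Proposition \emph{as stated} does not appear to be controllable by $T(\log T)^{k_1^2+\cdots+k_r^2}$ for generic $\widehat{k}$. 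Your remedy --- bound $\int_{\mathscr{B}_{1,\ell}}\prod_i|L(\tfrac12+it,\pi_i)|^{2k_i}\,\mathrm{d}t$ directly by Cauchy--Schwarz, a Chebyshev estimate for $\mathrm{meas}(\mathscr{B}_{1,\ell})$, and the $4(k_1,\dots,k_r)$-th moment bound --- is the standard one (it is exactly what Harper and Heap do for the initial block), and it is correct as a patch to the overall argument. But be clear about what it proves: it shows the $\mathscr{B}_1$ contribution to $I_{k_1,\dots,k_r}$ is negligible, which amounts to replacing the $j=1$ term in Proposition \ref{upperBoundInitialIneq} rather than proving the $j=1$ case of the present Proposition. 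Strictly read, you are proving a slight variant of the statement (restrict to $j\geq 2$, and dispose of $\mathscr{B}_1$ upstream), which is likely what the author intended. This is a sharper reading of the situation than the paper itself makes explicit.
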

	
	\begin{proof}
		We argue similarly to the proof of Proposition \ref{goodSetLemma}. Since the length of the the product of all the Dirichlet polynomials in the integral has length $\leq T^{8/10}$, we are again in a position to apply Lemma \ref{coprimeDirichletPolysLemma}. The same computations as in Proposition \ref{goodSetLemma}, in addition to applying Lemma \ref{soundHighMomentsLemma}, yields 
		\begin{align*}
			&\frac{1}{T}\int_{T}^{2T}|\mathcal{M}_{T_\ell}(1+2it)|^2\exp\left(\frac{2\widehat{k}}{\theta_{j-1}}\right)\left(\frac{|\mathcal{P}_{j,T_\ell}(\tfrac12+it)|}{K_j}\right)^{2s_j}\prod_{i=1}^{j-1} |\mathcal{N}_{i,T_\ell}(\tfrac12+it)|^2\dt \\
			\ll &(\log T_1)^{k_1^2+\dots+k_r^2}\exp\left(\frac{2\widehat{k}}{\theta_{j-1}}\right)\left(K_j^{-2s_j}s_j!\left(\sum_{T_{j-1}<p\leq T_j} \frac{|\mathbf{\Lambda}_{T_\ell}(p)|^2}{p}\right)^{s_j}\right)\left(\frac{\log T_{j-1}}{\log T_{1}}\right)^{k_1^2+\dots+k_r^2} \\
			\ll &(\log T_{j-1})^{k_1^2+\dots+k_r^2}\exp\left(\frac{2\widehat{k}}{\theta_{j-1}}\right)K_j^{-2s_j}s_j!\left(2(k_1^2+\dots+k_r^2) \right)^{s_j},
		\end{align*}
		where we used Selberg orthogonality in the end, in addition to the observation $$\log \frac{\log T_{j}}{\log T_{j-1}} \leq 2(k_1^2+\dots+k_r^2)$$ for large enough $T$. By Stirling's formula we have 
		\begin{align*}
			&\log\left(\frac{1}{\theta_j}\right)(\log T_{j-1})^{k_1^2+\dots+k_r^2}\exp\left(\frac{2\widehat{k}}{\theta_{j-1}}\right)K_j^{-2s_j}s_j!\left(2(k_1^2+\dots+k_r^2) \right)^{s_j} \\
			\ll &(\log T)^{k_1^2+\dots+k_r^2} \exp \left(\frac{2\widehat{k}}{\theta_{j-1}} -\frac{1}{20\theta_j}\log\left(\frac{1}{\theta_j}\right)\right) \exp \left(\frac{1}{10\theta_j}\log\left(\frac{2}{10e}\right)-\frac{1}{2}\log\theta_j+\log_2\left(\frac{1}{\theta_j}\right)\right) \\
			\ll &(\log T)^{k_1^2+\dots+k_r^2}\exp\left(-\frac{1}{\theta_j}\log\left(\frac{1}{\theta_j}\right)\left(\frac{2\widehat{k}e}{\log \theta_j}+\frac{1}{20}\right)\right) \ll (\log T)^{k_1^2+\dots+k_r^2}\exp \left(-\frac{C}{\theta_j}\log\left(\frac{1}{\theta_j}\right)\right)
		\end{align*}
		for a constant $C$ which is bigger than $0$ because $$\frac{1}{20}+\frac{2\widehat{k}e}{\log(\theta_j)}\geq \frac{1}{20}-\frac{2\widehat{k}e}{1000\widehat{k}}>0.$$ Using $J-j \ll \log(1/\theta_j)$ we conclude that
		\begin{align*}
			&\sum_{\substack{1 \leq j \leq J, \\ j\leq \ell \leq J}}\int_{T}^{2T}|\mathcal{M}_{T_\ell}(1+2it)|^2\exp\left(\frac{2\widehat{k}}{\theta_{j-1}}\right)\left(\frac{|\mathcal{P}_{j,T_\ell}(\tfrac12+it)|}{K_j}\right)^{2s_j}\prod_{i=1}^{j-1} |\mathcal{N}_{i,T_\ell}(\tfrac12+it)|^2\dt \\
			\ll &(\log T)^{k_1^2+\dots+k_r^2}\sum_{1 \leq j \leq J} \exp \left(-\frac{C}{\theta_j}\log\left(\frac{1}{\theta_j}\right)\right) \ll (\log T)^{k_1^2+\dots+k_r^2},
		\end{align*}
		where the last inequality is true because the last sum is bounded.
	\end{proof}
	
	\section{Deducing Theorem \ref{largeDeviationsCorollary} and Corollary \ref{hurwitzCorollary} from Theorem \ref{mainThmLowerBound} and \ref{mainThmUpperBound}}
	{Throughout this section, we shall use the following shorthand notation for our distributional function: $$\Phi_T(V_1,\dots,V_r) \coloneq \textup{meas}\left( t \in [1,T] : \frac{\log|L(\tfrac12+it,\pi_j)|}{\sqrt{\frac12 \log \log T}} \geq V_j, \enspace \forall 1 \leq j \leq r\right).$$ Throughout we assume $V_j \asymp \sqrt{\log\log T}$, so we fix positive constants $C_j$ straight away so that $$V_j=C_j\sqrt{\log\log T}.$$ We prove two lemmata before we start our proof of Theorem \ref{largeDeviationsCorollary}.
		
		\begin{lem}\label{largeDeviationsLemma1}
			We have
			\begin{align}\label{distributionalEquality}
				&\int_{1}^{T} \prod_{j=1}^r |L(\tfrac12+it,\pi_j)|^{2k_j} \dt \\
				= &2^r\left(\frac12 \log \log T\right)^{r/2} k_1\cdots k_r \int_{-\infty}^\infty \cdots \int_{-\infty}^{\infty} e^{2\sqrt{\frac12 \log\log T}(k_1W_1+\dots+k_rW_r)}\Phi_T(W_1,\dots,W_r)\textup{d}W_1\cdots \textup{d}W_r. \nonumber
			\end{align}
		\end{lem}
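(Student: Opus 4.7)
The plan is to rewrite the integrand via a layer-cake identity, and then swap the order of integration using Fubini--Tonelli. Set $L_T \coloneq \sqrt{\tfrac12 \log\log T}$ for brevity, and define $X_j(t) \coloneq \log|L(\tfrac12+it,\pi_j)|/L_T$, so that
$$\prod_{j=1}^r |L(\tfrac12+it,\pi_j)|^{2k_j} = \prod_{j=1}^r \exp\bigl(2k_j L_T X_j(t)\bigr).$$

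The key observation is that for any real number $a$ and any $c>0$, one has the identity
$$e^{ca} = c \int_{-\infty}^{a} e^{cW}\, \textup{d}W = c \int_{-\infty}^{\infty} \mathbf{1}_{W \leq a}\, e^{cW}\, \textup{d}W.$$
Applied with $c=2k_jL_T$ and $a=X_j(t)$ for each $j$ and multiplied together, this expresses the integrand as the product of $r$ integrals:
$$\prod_{j=1}^r |L(\tfrac12+it,\pi_j)|^{2k_j} = 2^r k_1\cdots k_r L_T^{r} \int_{\mathbb{R}^r} \mathbf{1}_{W_j \leq X_j(t),\, \forall j}\, e^{2L_T(k_1W_1+\dots+k_rW_r)}\, \textup{d}W_1\cdots \textup{d}W_r.$$

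Now I would integrate both sides over $t\in[1,T]$ and invoke Fubini--Tonelli (which is justified by nonnegativity of the integrand) to swap the $t$-integral with the $W_j$-integrals. The inner integral then becomes
$$\int_{1}^{T} \mathbf{1}_{W_j \leq X_j(t),\, \forall j}\, \textup{d}t = \textup{meas}\bigl(\{t \in [1,T] : X_j(t)\geq W_j,\, \forall 1\leq j\leq r\}\bigr) = \Phi_T(W_1,\dots,W_r),$$
which is exactly the definition of $\Phi_T$. Substituting $L_T^r = (\tfrac12\log\log T)^{r/2}$ yields the claimed identity. There is no real obstacle here beyond checking the layer-cake identity, the positivity needed for Fubini, and bookkeeping the constants; this is a standard distribution-function representation of a moment integral.
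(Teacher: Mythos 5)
Your proposal is correct and is essentially the paper's argument: the paper applies the layer-cake identity $|L|^{2k_j}=\int_0^\infty\mathbf 1_{y\le|L|^{2k_j}}\,\textup{d}y$, invokes Fubini, and then substitutes $y_j=\exp(2k_jW_j\sqrt{\tfrac12\log\log T})$, whereas you have simply folded that substitution into the initial identity $e^{ca}=c\int_{-\infty}^\infty\mathbf 1_{W\le a}e^{cW}\,\textup{d}W$ before applying Fubini. The only cosmetic point worth flagging is that $X_j(t)=-\infty$ on the measure-zero set of critical-line zeros of $L(\cdot,\pi_j)$; your identity still holds there (both sides vanish), but the paper's formulation with $y\in(0,\infty)$ sidesteps this without comment.
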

		
		\begin{proof}
			We start by observing that $$|L(\tfrac12+it,\pi_j)|^{2k_j} = \int_0^\infty \mathbf{1}_{y\leq |L(\tfrac12+it,\pi_j)|^{2k_j}} \, \textup{d}y,$$ where $\mathbf{1}$ denotes the usual indicator function. Using this simple reformulation and Fubini's theorem several times we thus see that 
			\begin{align*}
				&\int_{1}^{T} \prod_{j=1}^r |L(\tfrac12+it,\pi_j)|^{2k_j} \dt \\
				= &\int_{1}^{T} \int_{0}^\infty \cdots \int_0^\infty \prod_{j=1}^r \mathbf{1}_{y_j \leq |L(\tfrac12+it,\pi_j)|^{2k_j}} \, \text{d}y_1\text{d}y_2\cdots\text{d}y_r\text{d}t \\
				= &\int_{0}^\infty \cdots \int_0^\infty\int_{1}^{T}\prod_{j=1}^r \mathbf{1}_{y_j \leq |L(\tfrac12+it,\pi_j)|^{2k_j}} \, \text{d}t\text{d}y_1\text{d}y_2\cdots\text{d}y_r \\
				= &\int_{0}^{\infty} \cdots \int_0^\infty \textup{meas}\left( t \in [1,T] : |L(\tfrac12+it,\pi_j)|^{2k_j} \geq y_j, \enspace \forall 1 \leq j \leq r\right) \, \text{d}y_1\text{d}y_2\cdots\text{d}y_r.
			\end{align*}
			Now we substitute $y_j=\exp(2k_jW_j\sqrt{\frac12\log\log T})$ for all $1\leq j \leq r$. Then the previous integral equals 
			$$2^r\left(\frac12 \log\log T\right)^{r/2}k_1\cdots k_r\int_{-\infty}^\infty \cdots \int_{-\infty}^\infty e^{2\sqrt{\frac12\log\log T}(k_1W_1+\dots+k_rW_r)}\Phi_T(W_1,\dots,W_r)\,\text{d}W_1\cdots\text{d}W_r.$$
		\end{proof}
		
		\begin{lem}\label{largeDeviationsIntegralMassLemma}
			Let $\varepsilon=o(1)$ be a given positive real number. Let furthermore $S_2,\dots,S_r$ be any subsets of $\mathbb{R}$, and define $I_1^+ \coloneq (V_1(1+\varepsilon),\infty)$ and $I_1^{-} \coloneq (-\infty,V_1(1-\varepsilon))$. Finally, let $k_j=C_j/\sqrt{2}$. Then
			\begin{align*}
				&\int_{I_1^{\pm}} \int_{S_2}\cdots \int_{S_r} e^{2\sqrt{\frac12 \log\log T}(k_1W_1+\dots+k_rW_r)}\Phi_T(W_1,\dots,W_r)\textup{d}W_1\cdots \textup{d}W_r \\
				\ll\, &\frac{T}{(\log \log T)^{r/2}}\exp\left(\frac{V_1^2}{2}(1-\varepsilon^2)+\frac{V_2^2}{2}+\dots+\frac{V_r^2}{2}\right).
			\end{align*}
		\end{lem}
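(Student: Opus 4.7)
The plan is to exploit the sharp upper bound from Theorem \ref{mainThmUpperBound} via a tilting trick on the exponential weight, inspired by the standard approach in large deviation theory. First, observe that with $k_j = C_j/\sqrt{2}$ the weight in the integrand simplifies to $e^{V_1 W_1 + \dots + V_r W_r}\Phi_T(W_1,\dots,W_r)$, since $2\sqrt{\tfrac12\log\log T}\cdot k_j = V_j$. Also, since $\Phi_T \geq 0$, I can enlarge each $S_j$ to all of $\mathbb{R}$ without penalty, so it suffices to treat the case $S_j=\mathbb{R}$ for $j\geq 2$.

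For the $I_1^{+}$ case, I would split the exponential as $e^{V_1 W_1} = e^{V_1(1+\varepsilon)W_1}\cdot e^{-V_1\varepsilon W_1}$, and bound the second factor using $W_1 \geq V_1(1+\varepsilon)$ by $e^{-V_1^2\varepsilon(1+\varepsilon)}$. Extending the $W_1$-integration to $\mathbb{R}$ and applying Lemma \ref{largeDeviationsLemma1} in reverse with $k_1$ replaced by $k_1(1+\varepsilon)$, the remaining integral equals a constant multiple of $(\log\log T)^{-r/2}\int_{1}^{T}|L(\tfrac12+it,\pi_1)|^{2k_1(1+\varepsilon)}\prod_{j\geq 2}|L(\tfrac12+it,\pi_j)|^{2k_j}\,\textup{d}t$. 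Invoking Theorem \ref{mainThmUpperBound} with exponents $(k_1(1+\varepsilon),k_2,\dots,k_r)$ produces a factor of $(\log T)^{k_1^2(1+\varepsilon)^2+k_2^2+\dots+k_r^2} = \exp\bigl(V_1^2(1+\varepsilon)^2/2 + \sum_{j\geq 2}V_j^2/2\bigr)$. Multiplying by the saved factor $e^{-V_1^2\varepsilon(1+\varepsilon)}$ and simplifying the exponent yields $V_1^2(1+\varepsilon)(1-\varepsilon)/2 = V_1^2(1-\varepsilon^2)/2$, which is precisely what is claimed.

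For the $I_1^{-}$ case the same strategy works with the tilt in the opposite direction: split $e^{V_1 W_1} = e^{V_1(1-\varepsilon)W_1}\cdot e^{V_1\varepsilon W_1}$, and bound the second factor using $W_1 \leq V_1(1-\varepsilon)$ by $e^{V_1^2\varepsilon(1-\varepsilon)}$. Extending to all of $\mathbb{R}$, applying Lemma \ref{largeDeviationsLemma1} with $k_1$ replaced by $k_1(1-\varepsilon)$, and using Theorem \ref{mainThmUpperBound} gives a factor of $\exp(V_1^2(1-\varepsilon)^2/2 + \sum_{j\geq 2}V_j^2/2)$. The total exponent in $V_1^2$ is then $\varepsilon(1-\varepsilon) + (1-\varepsilon)^2/2 = (1-\varepsilon)(1+\varepsilon)/2 = (1-\varepsilon^2)/2$, matching the desired bound.

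There is no genuine obstacle here; the argument is essentially a bookkeeping exercise once one sees the right tilt parameter. The one subtlety worth verifying is that Theorem \ref{mainThmUpperBound} applies with the perturbed exponents $k_1(1\pm\varepsilon)$: since $\varepsilon = o(1)$, for $T$ sufficiently large these perturbed values are still positive real numbers, and the implicit constants in Theorem \ref{mainThmUpperBound} may depend on the $k_j$'s (which is fine, as $V_j$ and hence $k_j$ are fixed). If uniformity in $k_1,\dots,k_r$ in Theorem \ref{mainThmUpperBound} were available, this argument would immediately give a version of Theorem \ref{largeDeviationsCorollary} in which the $V_j$'s are allowed to vary slightly, as noted earlier in the introduction.
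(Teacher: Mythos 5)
Your proposal is correct and follows essentially the same route as the paper: the same exponential tilt $e^{V_1W_1}\leq e^{-V_1^2\varepsilon(1\pm\varepsilon)^{\mp 1}\cdot(\pm 1)}e^{V_1(1\pm\varepsilon)W_1}$ on $I_1^{\pm}$ (the paper writes this with an auxiliary $\delta$ that it then sets equal to $\varepsilon$), followed by extending to all of $\mathbb{R}^r$ by positivity, undoing Lemma \ref{largeDeviationsLemma1} with $k_1$ replaced by $k_1(1\pm\varepsilon)$, and invoking Theorem \ref{mainThmUpperBound}. The uniformity caveat you raise about applying Theorem \ref{mainThmUpperBound} with the $T$-dependent exponent $k_1(1\pm\varepsilon)$ is equally present in the paper's own argument, so it is not a gap relative to the paper.
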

		
		\begin{proof}
			Let us first consider the case $I_1^+$. For any $\delta>0$ and $W_1 \in I_1^+$ we have $$e^{2\sqrt{\frac12 \log\log T}k_1W_1}=e^{2\sqrt{\frac12 \log\log T}k_1W_1(1+\delta-\delta)}\leq e^{-2\sqrt{\frac12\log\log T}k_1V_1(1+\varepsilon)\delta}e^{2\sqrt{\frac12\log\log T}k_1W_1(1+\delta)}.$$ Choosing $\delta=\varepsilon$, we thus have
			\begin{align*}
				&\int_{I_1^{+}} \int_{S_2}\cdots \int_{S_r} e^{2\sqrt{\frac12 \log\log T}(k_1W_1+\dots+k_rW_r)}\Phi_T(W_1,\dots,W_r)\textup{d}W_1\cdots \textup{d}W_r \\
				\leq &e^{-2k_1\varepsilon V_1(1+\varepsilon)\sqrt{\frac12 \log\log T}}\int_{-\infty}^\infty \cdots \int_{-\infty}^\infty e^{2\sqrt{\frac12 \log\log T}((1+\delta)k_1W_1+\dots+k_rW_r)}\Phi_T(W_1,\dots,W_r)\textup{d}W_1\cdots \textup{d}W_r.
			\end{align*}
			Recalling we have $k_j=\frac{C_j}{\sqrt{2}}$, Lemma \ref{largeDeviationsLemma1} and Theorem \ref{mainThmUpperBound} implies
			\begin{align*}
				&\int_{-\infty}^\infty \cdots \int_{-\infty}^\infty e^{2\sqrt{\frac12 \log\log T}((1+\delta)k_1W_1+\dots+k_rW_r)}\Phi_T(W_1,\dots,W_r)\textup{d}W_1\cdots \textup{d}W_r \\
				\ll\, &\frac{T}{(\log\log T)^{r/2}}\exp\left(\frac{V_1^2(1+\varepsilon)^2}{2}+\frac{V_2^2}{2}+\dots+\frac{V_r^2}{2}\right).
			\end{align*} 
			Thus, we can conclude that 
			\begin{align*}
				&\int_{I_1^{+}} \int_{S_2}\cdots \int_{S_r} e^{2\sqrt{\frac12 \log\log T}(k_1W_1+\dots+k_rW_r)}\Phi_T(W_1,\dots,W_r)\textup{d}W_1\cdots \textup{d}W_r \\
				\ll &\frac{T}{(\log \log T)^{r/2}}\exp\left(\frac{V_1^2}{2}(1-\varepsilon^2)+\frac{V_2^2}{2}+\dots+\frac{V_r^2}{2}\right). 
			\end{align*}

			Now, we turn to the case of $I_1^{-}$. This is very similar, expect that we now have for any $\delta>0$ and any $W_1 \in I_1^{-}$, that $$e^{2\sqrt{\frac{1}{2}\log\log T}k_1W_1} \leq e^{2\sqrt{\frac12\log\log T}k_1V_1(1-\varepsilon)\delta}e^{2\sqrt{\frac12\log\log T}k_1W_1(1-\delta)}.$$ The next steps are identical to the previous case.
		\end{proof}
	}
	\begin{proof}[Proof of Theorem \ref{largeDeviationsCorollary}]
		{By Chebyshev's inequality and Theorem \ref{mainThmUpperBound}, we have for any fixed $k_j>0$ that
			\begin{align*}
				\Phi_T(V_1,\dots,V_r) &= \textup{meas}\left( t \in [1,T] : |L(\tfrac12+it,\pi_j)| \geq (\log T)^{\sqrt{\frac{C_j^2}{2}}}, \enspace \forall 1 \leq j \leq r\right) \\
				&\leq (\log T)^{-\sum_{1\leq j \leq r} 2k_j\sqrt{\frac{C_j^2}{2}}}\int_{1}^{T} |L(\tfrac12+it,\pi_1)|^{2k_1}\cdots |L(\tfrac12+it,\pi_r)|^{2k_r} \, \textup{d}t \\
				&\ll T(\log T)^{\sum_{1\leq j \leq r} k_j^2-2k_j\sqrt{\frac{C_j^2}{2}}}.
			\end{align*}
			Choosing $k_j = \frac{C_j}{\sqrt{2}}$ we see that the above is indeed $$\ll T\exp\left(-\frac{V_1^2+\dots+V_r^2}{2}\right).$$
			
			Now we turn to the lower bound, where we have to work more directly with the integral on the right hand side of (\ref{distributionalEquality}). Our goal is to show that the integrand has its mass in $(V_1(1-\varepsilon),V_1(1+\varepsilon))\times \cdots \times (V_r(1-\varepsilon),V_r(1+\varepsilon))$. To this end, we split up the integral in each variable $W_j$ into $(-\infty, V_j(1-\varepsilon)), [V_j(1-\varepsilon),V_j(1+\varepsilon)]$ and $(V_j(1+\varepsilon),\infty)$. We will denote these intervals by $I_j^-, I_j$ and $I_j^+$ respectively. Doing this in all variables we split up the integral into $3^r$ boxes in $\mathbb{R}^r$. If all of the sides of the box are of the form $\mathcal{I}_j$, we leave the integral as it is. If this is not the case, it means that at least one of the sides in the box take the form $\mathcal{I}_j^{-}$ or $\mathcal{I}_j^{+}$. Without loss of generality, say $j=1$ (up to a possible permutation). Let us name the other sides of the box $S_2,\dots,S_r$. Thus, if $k_j=C_j/\sqrt{2}$, Lemma \ref{largeDeviationsIntegralMassLemma} implies 
			\begin{align*}
				&\int_{-\infty}^\infty \cdots \int_{-\infty}^\infty e^{2\sqrt{\frac12\log\log T}(k_1W_1+\dots+k_rW_r)}\Phi_T(W_1,\dots,W_r)\,\text{d}W_1\cdots\text{d}W_r \\
				=\, &\int_{I_1}\cdots \int_{I_r} e^{2\sqrt{\frac12\log\log T}(k_1W_1+\dots+k_rW_r)}\Phi_T(W_1,\dots,W_r)\,\text{d}W_1\cdots\text{d}W_r \\ 
				&+O\left(\frac{T}{(\log \log T)^{r/2}}\sum_{j=1}^r \exp\left(\frac{V_1^2}{2}+\dots+\frac{V_j^2}{2}(1-\varepsilon^2)+\dots+\frac{V_r^2}{2}\right)\right) \\
				\ll\, &\varepsilon^r V_1\cdots V_r \Phi_T(V_1(1-\varepsilon),\dots,V_r(1-\varepsilon))\exp\left(2\sqrt{\frac12\log\log T}(1+\varepsilon)(k_1V_1+\dots+k_rV_r)\right)\\ 
				&+ O\left(\frac{T}{(\log \log T)^{r/2}}\sum_{j=1}^r \exp\left(\frac{V_1^2}{2}+\dots+\frac{V_j^2}{2}(1+\varepsilon^2)+\dots+\frac{V_r^2}{2}\right)\right)
			\end{align*} 
			Tidying up and using Lemma \ref{largeDeviationsLemma1}, we have
			\begin{align*}
				&\frac{1}{T}\Phi_T(V_1(1-\varepsilon),\dots,V_r(1-\varepsilon)) \\ 
				&+ O\left(\frac{\sum_{j=1}^r \exp\left(-\frac{V_1^2}{2}(1+2\varepsilon)-\dots-\frac{V_j^2}{2}(1+2\varepsilon-\varepsilon^2)-\dots-\frac{V_r^2}{2}(1+2\varepsilon)\right)}{(\log \log T)^{r/2}\varepsilon^rV_1\cdots V_r}\right) \\
				\gg\, &\frac{1}{T}\frac{\exp\left(-(1+\varepsilon)\left(\frac{V_1^2}{2}+\dots+\frac{V_r^2}{2}\right)\right)}{(\log \log T)^{r/2}\varepsilon^rV_1\cdots V_r}\\ \enspace&\enspace \times \int_{-\infty}^\infty \cdots \int_{-\infty}^\infty e^{2\sqrt{\frac12\log\log T}(k_1W_1+\dots+k_rW_r)}\Phi_T(W_1,\dots,W_r)\,\text{d}W_1\cdots\text{d}W_r \\
				\gg \, &\frac{\exp\left(-(1+\varepsilon)\left(V_1^2+\dots+V_r^2\right)\right)}{(\log\log T)^{r}\varepsilon^rV_1\cdots V_r}(\log T)^{k_1^2+\dots+k_r^2} \\
				\gg &\exp\left(-(1+o(1))\left(\frac{V_1^2}{2}+\dots+\frac{V_r^2}{2}\right)\right),
			\end{align*}
			with $\varepsilon = \frac{D}{(\log \log T)^2}$ for some constant $D$. Comparing the error term on the left hand side to main term, we can thus conclude that $$\frac{1}{T}\Phi_T(V_1(1-\varepsilon),\dots,V_r(1-\varepsilon)) \gg \exp\left(-(1+o(1))\left(\frac{V_1^2}{2}+\dots+\frac{V_r^2}{2}\right)\right).$$ Letting $V_j\mapsto \frac{V_j}{1-\varepsilon}$, the desired inequality follows. 
		}
	\end{proof}

	\begin{proof}[Proof of Corollary \ref{hurwitzCorollary}]
		Theorem \ref{mainThmUpperBound} give us sharp upper bound for $2k$th moment for any Dirichlet $L$-function with primitive character, and hence also any imprimitive character, up to order. If $k>1/2$, we use Jensen's inequality and (\ref{hurwitzZetaDirichletLinearCombs}) to see that $$|\zeta(s,\alpha)|^{2k} \ll \sum_{\chi} |L(s,\chi)|^{2k},$$  which gives us the desired upper bound. If $k\leq 1/2$, we instead apply the inequality $(x+y)^{1/p} \leq x^{1/p}+y^{1/p}$, which is valid for any $x,y\geq 0$, $p\geq 1$.
		
		{For the lower bound, we apply Hölder's inequality to the function $q^{-it}\zeta(\tfrac12+it,\alpha)\overline{\zeta(\tfrac12+it)}|\zeta(\tfrac{1}{2}+it)|^{2(k-1)}$ in the following way:
			\begin{align*}
				&\frac{1}{T}\left|\int_{1}^{T} q^{-it}\zeta(\tfrac12+it,\alpha)\overline{\zeta(\tfrac12+it)}|\zeta(\tfrac{1}{2}+it)|^{2(k-1)}\dt \right| \\
				\leq &\left(\frac{1}{T}\int_{1}^{T}|\zeta(\tfrac12+it,\alpha)|^{2k}\dt\right)^{1/(2k)}\left(\frac{1}{T} \int_{1}^{T} |\zeta(\tfrac12+it)|^{2k}\dt\right)^{(2k-1)/(2k)}
			\end{align*}
			By the sharp moment bound for the Riemann zeta function \cite{Harper1}, we conclude from the above that $$\frac{1}{T}\int_{1}^{T}|\zeta(\tfrac12+it,\alpha)|^{2k}\dt \geq \frac{1}{T}\left|\int_{1}^{T} q^{-it}\zeta(\tfrac12+it,\alpha)\overline{\zeta(\tfrac12+it)}|\zeta(\tfrac{1}{2}+it)|^{2(k-1)}\dt \right|^{2k}(\log T)^{k^2(1-2k)}.$$
			If we can show that the integral on the right hand side is $\asymp T(\log T)^{k^2}$, then the desired conclusion follows. This is precisely what we expect as well, because $q^{-it}\overline{\zeta(\tfrac12+it)}|\zeta(\tfrac{1}{2}+it)|^{2(k-1)}$ should ``pick out'' the term $L(\tfrac12+it,\chi_0)\approx\zeta(\tfrac12+it)$ from the sum in (\ref{hurwitzZetaDirichletLinearCombs}), and by independence of we expect the other moments to be smaller. Indeed, opening up the integral we see
			\begin{align*}
				&\int_{1}^{T} q^{-it}\zeta(\tfrac12+it,\alpha)\overline{\zeta(\tfrac12+it)}|\zeta(\tfrac{1}{2}+it)|^{2(k-1)}\dt \\
				=\,&\frac{q^{1/2}}{\varphi(q)}\sum_{\chi} \overline{\chi(a)}\int_{1}^{T}L(\tfrac12+it,\chi)\overline{\zeta(\tfrac12+it)}|\zeta(\tfrac{1}{2}+it)|^{2(k-1)} \dt \\
				\asymp\, &T(\log T)^{k^2} + O\left(T(\log T)^{k^2-k+\frac{1}{2}}\right).
			\end{align*}
			Here we have used Theorem \ref{mainThmLowerBound} and \ref{mainThmUpperBound} to compute the moments. Because $k\geq \frac{1}{2}$, this finishes the proof.
		}
	\end{proof}

\end{document}